\documentclass{article}
\usepackage[a4paper]{geometry}
\usepackage{graphicx} % Required for inserting images
\usepackage{mathtools,amssymb,bm,amsthm}
\usepackage{tikz-cd}

\newcommand{\field}{\mathbb{K}}
\newcommand{\hecke}[1]{\mathcal{H}_{#1}}
\newcommand{\cible}{\mathcal{V}}
\newcommand{\source}[1]{\Gamma^{#1}_q \mathcal{V}}
\newcommand{\quantumfunctor}[1]{\mathcal{P}^{#1}_q}
\newcommand{\polyfunctor}[1]{\mathcal{P}^{#1}_1}
\newcommand{\symgroup}[1]{\mathfrak{S}_{#1}}

\newcommand{\twist}[1]{#1^{(1)_q}}
\newcommand{\tensorfunctor}[1]{\mbox{$\bigotimes^{#1}$}}

\DeclareMathOperator{\End}{End}
\DeclareMathOperator{\Hom}{Hom}

\DeclareMathOperator{\id}{id}

\DeclareMathOperator{\Ext}{Ext}
\DeclareMathOperator{\Image}{im}
\DeclareMathOperator{\Induction}{Ind}

\newtheorem{theorem}{Theorem}[section]
\newtheorem*{theorem*}{Theorem}
\newtheorem{lemma}[theorem]{Lemma}
\newtheorem{proposition}[theorem]{Proposition}
\newtheorem{corollary}[theorem]{Corollary}
\newtheorem{conjecture}[theorem]{Conjecture}
\theoremstyle{remark}
  \newtheorem{remark}[theorem]{Remark}
  
  \newtheorem*{warning}{Warning}
\theoremstyle{definition}
  \newtheorem{definition}[theorem]{Definition}
  
  \newtheorem{problem}[theorem]{Problem}
  \newtheorem{example}[theorem]{Example}
  \newtheorem*{examples}{Examples}
  \newtheorem{notation}[theorem]{Notation}

\title{Quantum Troesch complexes}
\author{Théo Deturck}
\date{}

\begin{document}

\maketitle

\begin{abstract}
    We study the effect of quantum Frobenius twist on Ext-groups in the category of quantum polynomial, and prove that the existence of type of complexes, called quantum Troesch complexes, enables the construction of a spectral sequence computing the Ext groups of twisted functors from the knowledge of Ext-groups of the original functors. We then construct quantum Troesch complexes in the special case where the parameters of the quantum deformation is a root of unity of order $3$.
\end{abstract}

\section{Introduction}

Let $\field$ be a field, and $q \in \field^{\times}$ be a non-zero scalar. In \cite{friedlander1997cohomology}, Friedlander and Suslin introduced the category of strict polynomial functors, a category particularly useful for studying the representation theory of $\mathrm{GL}_n(\field)$, and of Schur algebras. This category was deformed in \cite{hong2017quantum} to obtain the category of quantum polynomial functors. The latter is related to $q$-Schur algebras $S_q(n;d)$ in the same way that strict polynomial functors are related to Schur algebras. More specifically, a quantum polynomial functor $F$ of degree $d$ can be seen as a family $F(n)$ of $S_q(n;d)$-modules with some compatibilities between them. The evaluation $F \mapsto F(n)$ then defines an equivalence of category between the category of quantum polynomial functors of degree $d$ and the category of finite-dimensional $S_q(n;d)$-modules, provided that $n \geq d$. In particular, it gives isomorphisms
\[
    \Ext^*_{\quantumfunctor{}}(F,G) \xrightarrow{\simeq} \Ext^*_{S_q(n;d)}(F(n),G(n))
\]
between Ext-groups, when $n \geq d$.

When $q$ is a root of unity of odd order, a quantum polynomial functor $\twist{F}$ can be constructed from a strict polynomial functor $F$, using the quantum Frobenius twist. The representations of the $q$-Schur algebra obtained in this way play an important role in the representation theory of quantum Schur algebras. For example, each irreducible representation of the $q$-Schur algebra decomposes as a tensor product of the form $M \otimes N^{(1)_q}$, where $M$ belongs to a finite family of representations of the $q$-Schur algebra, and $N$ is a representation of the classical Schur algebra. Hence, it may be interesting to compare the Ext-groups $\Ext^*_{\quantumfunctor{}}(\twist{F},\twist{G})$ and $\Ext^*_{\polyfunctor{}}(F,G)$. In \cite{théo2025extgroupcategoryquantumpolynomial}, we made the following conjecture.
\begin{conjecture}\label{conj}
    For all strict polynomial functors $F,G$, there is a graded isomorphism
    \[
        \Ext^k_{\quantumfunctor{}}(F^{(1)_q},G^{(1)_q}) \simeq \bigoplus_{i+j=k} \Ext^i_{\polyfunctor{}}(F,G^j_{E}).
    \]
    where $G^j_{E}$ is a strict polynomial functor constructed from $G$ in an explicit way (definitions \ref{def:parametre} and \ref{def:E1}).
\end{conjecture}

We proved in \cite{théo2025extgroupcategoryquantumpolynomial} that this conjecture holds in characteristic $0$ or in characteristic $p$ strictly greater than the degrees of $F$ and $G$. The purpose of this article is to present some advances towards a proof of conjecture \ref{conj} independent of the characteristic. To do this, we will consider quantum versions of Troesch complexes (and adapted to another context in \cite{drupieski2022superized}), introduced by Troesch in \cite{troesch2005resolution}, and used by Touzé in \cite{touze2010universal} and \cite{touze2012troesch} to tackle the analogous problem with the classical Frobenius twist (also solved by Chalupnik in \cite{chalupnik2015derived}).

The paper is divided into two parts. The first part (section 2) introduces the concept of quantum Troesch complex (see definition \ref{def:qTroesch}) and explains what can be done if we have a quantum Troesch complex at our disposal. After briefly reviewing the general theory of quantum polynomial functors and of $\ell$-complexes, we show how one can obtain, \textit{assuming the existence of quantum Troesch complexes}, isomorphisms
\[
    \Ext^*(\twist{F},\twist{(S^\mu)}) \simeq \Hom_{\polyfunctor{}}(F,(S^\mu_{E})^*)
\]
natural in $F$ and $S^\mu$ (the $S^\mu$, where $\mu$ is a tuple of non-negative integers, are strict polynomial functors which form a cogenerating family of injective objects). We then deduce from this the existence of a twisting spectral sequence, as in \cite[Theorem 7.1]{touze2012troesch}.
\begin{theorem}\label{theo:intro}
    Suppose the existence of quantum Troesch complexes. Let $F,G$ be strict polynomial functors. Then there is a first quadrant spectral sequence, natural in $F$ and $G$,
    \[
        E_2^{s,t}(F,G) = \Ext^s_{\polyfunctor{}}(F,G_{E}^t) \implies \Ext_{\quantumfunctor{}}^{s+t}(\twist{F},\twist{G}).
    \]
    Moreover, if $F_1,F_2,G_1,G_2$ are strict polynomial functors, there is a pairing of spectral sequences
    \[
        E^{*,*}(F_1,G_1) \otimes E^{*,*}(F_2,G_2) \to E^{*,*}(F_1 \otimes F_2, G_1 \otimes G_2) 
    \]
    (see \cite[3.9]{benson1991representations} for a definition) which coincides with the cup product on the second page and on the abutment. If this spectral sequence collapses at the second page for some particular $F$ and $G$, then conjecture \ref{conj} holds for these particular choice of $F$ and $G$.
\end{theorem}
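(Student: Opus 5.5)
The plan follows Touzé's strategy in \cite[Section 7]{touze2012troesch}. The starting point is the natural isomorphism, obtained from quantum Troesch complexes,
\[
\Ext^*_{\quantumfunctor{}}(\twist{F},\twist{(S^\mu)}) \simeq \Hom_{\polyfunctor{}}(F,(S^\mu_{E})^*).
\]
It is natural both in $F$ and in $S^\mu$, meaning natural with respect to morphisms between the injective cogenerators $S^\mu$.

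\textbf{Step 1: resolve $G$.} Take an injective coresolution $G \to J^\bullet$ in $\polyfunctor{}$ in which each $J^k$ is a finite sum of functors $S^\mu$. Choose a quasi-isomorphism $\twist{(J^k)} \to I^{k,\bullet}$ of each twisted term into injectives of $\quantumfunctor{}$, functorially in $k$. This gives a Cartan--Eilenberg type bicomplex $I^{\bullet,\bullet}$, and we form the double complex
\[
C^{s,t}=\Hom_{\quantumfunctor{}}(\twist{F},I^{s,t}).
\]
The quantum Frobenius twist is exact, so $\twist{G} \to \twist{(J^\bullet)}$ is still a coresolution. Hence the total complex of $I^{\bullet,\bullet}$ is an injective coresolution of $\twist{G}$, and the total cohomology of $C$ is $\Ext^*_{\quantumfunctor{}}(\twist{F},\twist{G})$.

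\textbf{Step 2: first filtration.} Take $t$-cohomology first. This gives
\[
E_1^{s,t}=\Ext^t(\twist{F},\twist{(J^s)}) \simeq \Hom_{\polyfunctor{}}(F,(J^s)^t_E),
\]
using the starting isomorphism. The identification requires naturality in the $J^s$, which is exactly why naturality in $S^\mu$ is needed.

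Next, use that $G \mapsto G^t_E$ is exact and sends injectives $S^\mu$ to injectives. I expect this to follow from Definitions~\ref{def:parametre} and~\ref{def:E1}: $G_E$ is a parametrization/precomposition by a graded space $E$, and $S^\mu_E$ decomposes as a sum of $S^\nu$'s. Then $(J^\bullet)^t_E$ is an injective coresolution of $G^t_E$, so
\[
E_2^{s,t}=\Ext^s_{\polyfunctor{}}(F,G^t_E).
\]
The spectral sequence sits in the first quadrant and converges to the total cohomology. Naturality in $F$ and $G$ follows from the functoriality of Cartan--Eilenberg resolutions up to homotopy.

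\textbf{Step 3: pairing.} Tensor products of injective resolutions give maps of double complexes
\[
C(F_1,G_1)\otimes C(F_2,G_2) \to C(F_1\otimes F_2,G_1\otimes G_2).
\]
For this we must check three things:
\begin{itemize}
\item twisting commutes with $\otimes$;
\item the tensor product of two Cartan--Eilenberg coresolutions maps into one for the tensor product, which holds by the comparison theorem since the source is a coresolution;
\item the starting isomorphism is compatible with $\otimes$, i.e.\ multiplicative on $S^\mu\otimes S^\nu=S^{(\mu,\nu)}$.
\end{itemize}
Given these, the pairing of filtered complexes induces a pairing of spectral sequences in the sense of \cite{benson1991representations}. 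On $E_2$ and on the abutment it is the cup product, since both are computed by tensoring resolutions.

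\textbf{Step 4: collapse.} If the spectral sequence collapses at $E_2$, the abutment in degree $k$ has a finite filtration whose graded pieces are $E_2^{i,k-i}$. We work over the field $\field$ and all spaces are finite-dimensional, so the filtration splits. This gives the isomorphism of Conjecture~\ref{conj}.

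The \textbf{main obstacle} is the multiplicativity and naturality of the starting isomorphism with respect to all morphisms $S^\mu\to S^\nu$ and to tensor products. This has to be extracted from the construction of quantum Troesch complexes. I would first try to show that the isomorphism is induced by a natural map of complexes $\twist{(S^\mu)}\to T(S^\mu)$, with $T$ the Troesch complex, and that this map is compatible with the tensor structure.
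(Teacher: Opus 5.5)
Your Steps 1--4 are the argument the paper uses for the final step, which it delegates to \cite[Theorem 7.1]{touze2012troesch}. You coresolve $G$ by sums of $S^\mu$, identify $E_1$ through the isomorphism, and use exactness of $G\mapsto G_E$ and injectivity of $S^\mu_E$ to get $E_2$. You then tensor resolutions for the pairing and split the filtration over $\field$.

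The gap is the input you take as your starting point: an isomorphism $\Ext^*_{\quantumfunctor{}}(\twist{F},\twist{(S^\mu)})\simeq\Hom_{\polyfunctor{}}(F,(S^\mu_E)^*)$ that is natural in \emph{both} variables and compatible with cup products. You rightly call this the main obstacle. However, your suggested route, deriving it from functoriality of the Troesch complexes, is not available. Under the hypothesis of the theorem, a quantum Troesch complex is only some $\ell$-differential on $B_{\ell d}$ making it an $\ell$-coresolution of $S^{d(1)_q}$ (Definition~\ref{def:qTroesch}). It comes with no product and no maps $B_{\ell\mu}\to B_{\ell\nu}$ lifting morphisms $S^\mu\to S^\nu$. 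Even for $S^{(\mu,\nu)}=S^\mu\otimes S^\nu$, the coresolution $(B_{\ell\mu}\otimes B_{\ell\nu})_{[1]}$ is not the ordinary tensor product $B_{\ell\mu[1]}\otimes B_{\ell\nu[1]}$ that computes cup products.

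What the Troesch complex does give (Proposition~\ref{prop:firstiso}) is an isomorphism natural only in $F$:
\begin{itemize}
\item the odd terms of $B_{\ell\mu[1]}$ only contain $S^\beta_q$ with $\ell\nmid\beta$, so $\Hom(\twist{F},-)$ kills them by Lemma~\ref{lem:Hom(frob,Sq)};
\item hence the Hom complex has zero differential, and its even terms are identified with $\Hom_{\polyfunctor{}}(F,S^\mu_E)$.
\end{itemize}
This does not identify the differential $d_1$ of your Step 2, which involves arbitrary maps between the $S^\mu$'s. Nor does it identify the pairing of your Step 3.

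The missing idea is to use the Troesch complexes \emph{only} to obtain exactness, and to get naturality and multiplicativity from tensor powers instead.
\begin{itemize}
\item \textbf{Exactness.} From Proposition~\ref{prop:firstiso} and injectivity of $S^\mu_E$, the functor $F\mapsto\Ext^*(\twist{F},\twist{(S^\mu)})$ is exact, and by duality so is $G\mapsto\Ext^*(\twist{(\Gamma^\mu)},\twist{G})$ (Corollary~\ref{cor:exactitude}).
\item \textbf{Tensor powers.} Independently, the earlier paper computes $\Ext^*_{\quantumfunctor{}}(\tensorfunctor{d(1)_q},\tensorfunctor{d(1)_q})$ as the $\field\symgroup{d}$-bimodule $E^{\otimes d}\otimes\field\symgroup{d}$. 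It is spanned by the classes $(v_1\cup\cdots\cup v_d)\cdot\sigma$ with $v_i\in\Ext^*(\twist{I},\twist{I})\simeq E$. This gives an isomorphism with $\Hom_{\polyfunctor{}}(\tensorfunctor{d},(\tensorfunctor{d})_E)$ whose naturality is just bimodule equivariance and whose multiplicativity is checked directly (Theorem~\ref{theo:tenseurcase}).
\item \textbf{Extension.} Both sides are exact in each variable, and $\Gamma^\lambda$, $S^\mu$ are obtained from $\tensorfunctor{d}$ as invariants and coinvariants. So, as in \cite[Theorem 5.6]{touze2012troesch}, the tensor-power isomorphism extends to $\theta(F,S^\mu)$, natural in $F$ and $S^\mu$ and compatible with cup products (Theorem~\ref{theo:naturaliso}).
\end{itemize}
Once $\theta$ is in place, your Steps 2 and 3 go through as written.
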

For example, if $\Ext^s_{\polyfunctor{}}(F,G_{E}^t) = 0$ whenever $s$ is odd, then the conjecture \ref{conj} holds for the functors $F$ and $G$.

The second part of the paper (section 3) is dedicated to the construction of the quantum Troesch complexes when $q^3 = 1$. This is achevied in theorem \ref{theo:cohomologyBd}. In particular, this shows that theorem \ref{theo:intro} holds in this case. Actually, we prove a bit more than what is required in the definition of quantum Troesch complex, namely we endow the quantum Troesch complexes with a structure of graded differential $\quantumfunctor{}$-algebra, and prove that it is an exponential functor (see definition \ref{def : exponential functor}). This enables us to simplify the computation of the homology of the Troesch complexes. The main difficulty is constructing an appropriate differential and product. Whereas the differential itself is quite natural to consider, the product needed is an ad-hoc deformation of a more natural product. More precisely, the differential is a linear combination of the form $\delta_1 + \lambda \delta_2$, with $\delta_1$ and $\delta_2$ being two degree-$1$ maps which are natural to consider, and $\lambda$ (which is not actually a constant) may not be the most natural choice, but it is not difficult to determine. 
We observe that when $q^\ell=1$, the most natural product on quantum Troesch complexes comes from identifying this complex with the algebra of quantum $\ell \times n$ matrices, and taking the associated product. But contrarily to the ad-hoc product we discovered when $\ell=3$, this most natural product does not seem to be compatible with the differential of definition \ref{def : Troesch pour l=3}, nor with any differentials the author has considered. This leaves the following problem open for the moment.
\begin{problem}\label{problem}
Assume that $q^\ell=1$, for $\ell> 3$. Does there exist a Troesch complex in the sense of definition \ref{def:qTroesch}?
\end{problem}
% Meanwhile, on the quantum Troesch complexes, the most natural product comes from identifying it with the algebra of quantum $\ell \times n$ matrices, and taking this product. But this would not be compatible with the differential, nor with any differentials the author has considered, even changing what "being compatible" means. This is the reason we were not able to generalize our construction even for the case $\ell = 5$.

In this article, we have shown (in theorem \ref{theo:intro}) how to use quantum Troesch complexes as an ingredient in the computation of $\Ext$-group between twisted functors. We note that Troesch complexes have found other uses in the classical setting, for example in the construction of universal cohomology classes leading to the proof of finite cohomological generation \cite{touze2010}. Our theorem \ref{theo:cohomologyBd} and the resolution of problem \ref{problem} could be interesting in the perspective of proving analogues of this in the quantum setting.

\section{General theory}

\subsection{Quantum polynomial functors}\label{sec:qfunctor}

We fix $\field$ a field, and $q \in \field^\times$ a non-zero scalar. We recall here the basic notions of quantum polynomial functors, see \cite{hong2017quantum} and \cite[section 2]{théo2025extgroupcategoryquantumpolynomial} for more details.

The Hecke algebra $\hecke{d}$ of the symmetric group $\symgroup{d}$ acts in a certain way on the family of vector spaces $V_n^{\otimes d}$ where $V_n = \field^n$. Let $\source{d}$ be the category with objects the $V_n^{\otimes d}$, for $n \geq 0$, and with morphisms $\hecke{d}$-linear maps.

\begin{definition}\label{def:qfunctor}
    A quantum polynomial functor of degree $d$ is a $\field$-linear functor 
    \[
        F : \source{d} \to \cible,
    \]
    where $\cible$ is the category of finite-dimensional $\field$-vector spaces. We denote by $\quantumfunctor{d}$ the category of quantum polynomial functors of degree $d$, with morphisms the natural transformations. The category $\quantumfunctor{} = \bigoplus_{d \geq 0} \quantumfunctor{d}$ is the category of quantum polynomial functors. 
\end{definition}
The category $\quantumfunctor{}$ is a braided monoidal category, with a notion of duality. When $q=1$, this category is equivalent to the category of strict polynomial functors of Friedlander and Suslin \cite{friedlander1997cohomology}. Unlike our category, the category of Friedlander and Suslin is defined as a subcategory of the category of functors $\cible \to \cible$. Hence, if $V$ is any finite-dimensional vector space and $F \in \polyfunctor{d}$, $F(V)$ is well defined. For our purposes, the reader can simply understand $F(V)$ as being  $F(V_n^{\otimes d})$ where $n$ is the dimension of $V$.

% One important detail : the category of strict polynomial functors $\polyfunctor{}$ are functors which accept in sources any vector space. For any vector space $V$ and any strict polynomial functor $F \in \polyfunctor{}$, the reader can understand $F(V)$ as being $F(V_n^{\otimes d})$ where $n$ is the dimension of $V$.

To simplify notations, for $F \in \quantumfunctor{d}$, we let $F(n) = F(V_n^{\otimes d})$ for $n \geq 0$. We give some examples of quantum polynomial functors to fix notations.
\begin{example}
    \begin{itemize}
        \item The identity functor $I \in \quantumfunctor{1}$, given by $I(n) = V_n$.
        \item The tensor power functors $\bigotimes^d = \underbrace{I \otimes \cdots \otimes I}_{d} \in \quantumfunctor{d}$, given by $\bigotimes^d(n) = V_n^{\otimes d}$.
        \item The symmetric tensor functors $S^d_q \in \quantumfunctor{d}$. When $q=1$, we denote them simply by $S^d$.
        \item The divided power functors $\Gamma^d_q$, which are dual to $S^d_q$.
        \item Tensor product of these functors. In particular, for $\mu = (\mu_1,...,\mu_n)$ an $n$-tuple of non-negative integers, we let $S^\mu_q = S^{\mu_1}_q \otimes \cdots \otimes S^{\mu_n}_q$ and $\Gamma^\mu_q = \Gamma^{\mu_1}_q \otimes \cdots \otimes \Gamma^{\mu_n}_q$.
    \end{itemize}
\end{example}

The algebra $S_q(n;d) = \End_{\hecke{d}}(V_n^{\otimes d})$ is called the $q$-Schur algebra. If $F \in \quantumfunctor{d}$ is a quantum polynomial functors, then $F(n)$ is a left $S_q(n;d)$-module. We use this structure to decompose $F(n)$ into weight spaces.

\begin{definition}\label{def:weight}
    The unit $1_d \in S_q(n;d)$ decomposes as a sum of orthogonal idempotents
    \[
        1_d = \sum_{\alpha \in \Omega(d,n)} \xi_\alpha
    \]
    where $\Omega(d,n)$ is the set of compositions of $d$ in $n$ parts ($n$-tuples of non-negative integers whose sum is $d$). We let $F_\alpha = \xi_\alpha \cdot F(n)$. Then $F_\alpha$ is called the weight space of $F$ associated to $\alpha$. If $F_\alpha \neq 0$, we say that $\alpha$ is a weight of $F$.
\end{definition} 

\begin{proposition}\label{prop:weighttensor}
    Let $F,G \in \quantumfunctor{}$. Then for any composition $\alpha$ in $n$ parts,
    \[
        (F \otimes G)_\alpha = \bigoplus_{\alpha^1 + \alpha^2 = \alpha} F_{\alpha^1} \otimes G_{\alpha^2}
    \]
    where the direct sum is over all pairs of compositions $(\alpha^1,\alpha^2)$ in $n$ parts whose sum (coordinate wise) is $\alpha$.
\end{proposition}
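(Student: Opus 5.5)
The plan is to make explicit how the tensor product $F \otimes G$ of quantum polynomial functors is defined, and to track the idempotents $\xi_\alpha$ through that definition. Recall (following \cite{hong2017quantum}) that for $F \in \quantumfunctor{d}$ and $G \in \quantumfunctor{e}$ we have $(F\otimes G)(n) = F(n) \otimes G(n)$. Here the $S_q(n;d+e)$-action is obtained by restriction along an algebra map
\[
\Delta : S_q(n;d+e) \to S_q(n;d) \otimes S_q(n;e).
\]
This map comes from the identification $V_n^{\otimes (d+e)} = V_n^{\otimes d} \otimes V_n^{\otimes e}$ and the inclusion $\hecke{d}\otimes\hecke{e} \subset \hecke{d+e}$: an $\hecke{d+e}$-linear endomorphism is in particular $\hecke{d}\otimes\hecke{e}$-linear, and $\End_{\hecke{d}\otimes \hecke{e}}(V_n^{\otimes d}\otimes V_n^{\otimes e}) = S_q(n;d)\otimes S_q(n;e)$. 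So the whole statement reduces to a computation of $\Delta(\xi_\alpha)$.

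The key step is to show
\[
\Delta(\xi_\alpha) = \sum_{\alpha^1+\alpha^2=\alpha} \xi_{\alpha^1}\otimes \xi_{\alpha^2}.
\]
To do this I would use the concrete description of $\xi_\alpha$ as the projection of $V_n^{\otimes d}$ onto the span of the basis tensors $v_{i_1}\otimes\cdots\otimes v_{i_d}$ whose content is $\alpha$, that is, $\#\{k : i_k = j\} = \alpha_j$ for each $j$. This projection commutes with the Hecke action, because the $\hecke{d}$-action on basis tensors only permutes the indices, with scalar coefficients, and therefore preserves content. A basis tensor of $V_n^{\otimes (d+e)}$ of content $\alpha$ splits uniquely as a tensor of content $\alpha^1$ in the first $d$ factors and a tensor of content $\alpha^2$ in the last $e$ factors, with $\alpha^1+\alpha^2=\alpha$, $\alpha^1\in\Omega(d,n)$ and $\alpha^2\in\Omega(e,n)$. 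Hence the projection $\xi_\alpha$, viewed as an endomorphism of $V_n^{\otimes d}\otimes V_n^{\otimes e}$, is exactly $\sum \xi_{\alpha^1}\otimes\xi_{\alpha^2}$. I expect this to be the main point needing care: one has to make sure that the paper's (or Hong–Yacobi's) definition of $\xi_\alpha$ and of the tensor product agrees with this concrete description. In particular, if the tensor product is defined through a more abstract Day-convolution or induction construction, one must first identify $(F\otimes G)(n)$ with $F(n)\otimes G(n)$ as modules via $\Delta$.

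Once the formula for $\Delta(\xi_\alpha)$ is established, the rest is formal. We have
\[
(F\otimes G)_\alpha = \Delta(\xi_\alpha)\cdot\bigl(F(n)\otimes G(n)\bigr) = \sum_{\alpha^1+\alpha^2=\alpha} \xi_{\alpha^1}F(n)\otimes \xi_{\alpha^2}G(n).
\]
This sum is direct, because the summands $\xi_{\alpha^1}\otimes\xi_{\alpha^2}$ are mutually orthogonal idempotents: within $S_q(n;d)$ and within $S_q(n;e)$ the $\xi$'s are orthogonal, so distinct pairs $(\alpha^1,\alpha^2)$ give orthogonal tensor products. The resulting direct sum is $\bigoplus F_{\alpha^1}\otimes G_{\alpha^2}$, as claimed. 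Finally, to pass from homogeneous $F$ and $G$ to arbitrary objects of $\quantumfunctor{}$, I would decompose $F$ and $G$ into homogeneous components and use additivity of both sides.
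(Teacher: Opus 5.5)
Your proof is correct. The paper gives no argument of its own here; it simply refers to Proposition 2.11 of the author's earlier paper. Your computation is the natural self-contained proof of that fact.

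Since $F\otimes G$ acts on morphisms through the restriction $S_q(n;d+e)=\End_{\hecke{d+e}}(V_n^{\otimes (d+e)})\hookrightarrow \End_{\hecke{d}\otimes\hecke{e}}(V_n^{\otimes d}\otimes V_n^{\otimes e})\cong S_q(n;d)\otimes S_q(n;e)$, everything reduces to showing $\xi_\alpha\mapsto\sum_{\alpha^1+\alpha^2=\alpha}\xi_{\alpha^1}\otimes\xi_{\alpha^2}$. That identity follows at once from additivity of content. It does depend on identifying the paper's $\xi_\alpha$ with the content projections, which is the standard convention; as you note, this is the only point where you rely on the references' definitions.

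One small wording fix: each $T_k$ sends a basis tensor to a linear combination of itself and its $s_k$-permuted version, rather than "permuting indices with scalar coefficients". Both of those tensors have the same content, so your conclusion that the content subspaces are $\hecke{d}$-stable is unaffected.
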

\begin{proof}
    See \cite[Proposition 2.11]{théo2025extgroupcategoryquantumpolynomial}.
\end{proof}

To finish this subsection, we give some properties of the functors $S^\alpha_q$. The important properties of these functors are given in the following proposition.

\begin{proposition}\label{prop:symmetric power property}
    The functors $S^\alpha_q$, where $\alpha$ runs over all compositions of $d$, form a cogenerating family of injective objects of $\quantumfunctor{d}$. Moreover, for any $\alpha \in \Omega(d,n)$,
     \[
         \dim((S^d_q)_\alpha) = 1,
     \]
     and for any $F \in \quantumfunctor{d}$,
     \[
        \dim \Hom_{\quantumfunctor{}}(F,S^\alpha_q) = \dim F_\alpha.
     \]
\end{proposition}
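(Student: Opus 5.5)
The plan is to reduce everything to a Yoneda-type statement for the projective generators $\Gamma^\alpha_q$ and then dualize. The basic observation is that the representable functor $\Hom_{\source{d}}(V_n^{\otimes d}, -)$ is projective in $\quantumfunctor{d}$ by Yoneda's lemma. For $n \geq d$ it is a projective generator. Indeed, $F \mapsto F(n)$ is an equivalence with $S_q(n;d)$-modules, and under it the representable functor corresponds to the regular module.

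The first step is to decompose this representable functor using the idempotents $\xi_\alpha$ of Definition \ref{def:weight}. We have
\[
  \Hom_{\source{d}}(V_n^{\otimes d},-) = \bigoplus_{\alpha\in\Omega(d,n)} \Hom_{\source{d}}(V_n^{\otimes d},-)\,\xi_\alpha .
\]
Yoneda then gives $\Hom_{\quantumfunctor{}}(\Hom(V_n^{\otimes d},-)\xi_\alpha, F) \cong \xi_\alpha F(n) = F_\alpha$. The next task is to identify each summand with $\Gamma^\alpha_q$. Classically, the summand is $\Gamma^{\alpha}$ because $\xi_\alpha$ projects onto the $\symgroup{\alpha}$-invariants. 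In the quantum setting I would use the explicit description of $\Gamma^d_q(n)$ as the $\hecke{d}$-"invariants" of $V_n^{\otimes d}$, or cite \cite{hong2017quantum} and \cite{théo2025extgroupcategoryquantumpolynomial}, where $\Gamma^{d}_q = \Hom(V_1^{\otimes d},-)$ and more generally $\Gamma^\alpha_q$ is the representable attached to the weight $\alpha$.

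The main obstacle is this identification $\Gamma^\alpha_q \cong \Hom_{\source{d}}(V_n^{\otimes d},-)\xi_\alpha$. I would try to prove it first for a single part $\alpha=(d)$, using that $\xi_{(d,0,\dots)}V_n^{\otimes d}$ is spanned by $e_1^{\otimes d}$. Then I would pass to tensor products using Proposition \ref{prop:weighttensor} and the compatibility of representables with $\otimes$.

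Granting this identification, the $\Gamma^\alpha_q$ are projective, and $\dim\Hom(\Gamma^\alpha_q,F)=\dim F_\alpha$. They generate $\quantumfunctor{d}$, since their sum over $\alpha\in\Omega(d,n)$ with $n\ge d$ is the projective generator.

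Now dualize using the duality $F\mapsto F^\sharp$ of $\quantumfunctor{}$. It is an exact contravariant anti-equivalence with $(\Gamma^\alpha_q)^\sharp=S^\alpha_q$ and $(F^\sharp)_\alpha\cong (F_\alpha)^*$. The weight compatibility holds because duality is computed pointwise with the transpose action, and transposition preserves each $\xi_\alpha$. This gives three consequences:
\begin{itemize}
  \item the $S^\alpha_q$ are injective and cogenerating;
  \item $\dim\Hom(F,S^\alpha_q)=\dim\Hom(\Gamma^\alpha_q,F^\sharp)=\dim (F^\sharp)_\alpha=\dim F_\alpha$;
  \item in particular $\dim\Hom(\Gamma^\alpha_q, S^d_q) = \dim (S^d_q)_\alpha$.
\end{itemize}

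For the statement $\dim (S^d_q)_\alpha=1$, I would compute directly. $S^d_q(n)$ is the degree-$d$ part of the quantum affine space $\field\langle x_1,\dots,x_n\rangle/(x_jx_i-qx_ix_j,\ i<j)$. It has basis the ordered monomials $x_1^{\alpha_1}\cdots x_n^{\alpha_n}$, which is a PBW-type statement. Each such monomial spans exactly the weight space for $\alpha$. Alternatively, the Hom formula gives $\dim\Hom(\Gamma^\alpha_q,S^d_q)=\dim\Hom(\Gamma^d_q\,\text{restricted},\dots)$, but the PBW computation is cleaner.
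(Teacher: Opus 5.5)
The paper gives no argument for this proposition; it only cites Propositions 3.4 and 3.5 of the author's earlier paper. You instead reconstruct the result, and the reconstruction is correct. Your route is Yoneda for $\Hom_{\source{d}}(V_n^{\otimes d},-)$, splitting along the $\xi_\alpha$, identifying the summands with $\Gamma^\alpha_q$, dualizing, and counting ordered monomials in quantum affine space. This is the standard Friedlander--Suslin/Hong--Yacobi argument that such a citation rests on. Its advantage over the bare citation is that it isolates exactly what must be imported:
\begin{itemize}
\item the realization $\Gamma^k_q=\Hom_{\source{k}}(V_1^{\otimes k},-)$;
\item a monoidal duality with $(\Gamma^k_q)^\sharp\cong S^k_q$;
\item the description of $S^d_q(n)$ as a quotient of $V_n^{\otimes d}$.
\end{itemize}

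The step you flag as the main obstacle is easier than your sketch suggests, and Proposition~\ref{prop:weighttensor} is not the tool for it. Iterate the induction isomorphism of Notation~\ref{notation : inclusions} along $V_n=V_1\oplus\cdots\oplus V_1$. This identifies the weight-$\alpha$ part $\xi_\alpha V_n^{\otimes d}$ (the span of the basis tensors of weight $\alpha$) with $\Induction^{\hecke{d}}_{\hecke{\alpha_1}\otimes\cdots\otimes\hecke{\alpha_n}}(V_1^{\otimes\alpha_1}\otimes\cdots\otimes V_1^{\otimes\alpha_n})$. Frobenius reciprocity then gives, naturally in $m$,
\[
\Hom_{\source{d}}(V_n^{\otimes d},V_m^{\otimes d})\,\xi_\alpha\cong\Hom_{\hecke{d}}(\xi_\alpha V_n^{\otimes d},V_m^{\otimes d})\cong\bigotimes_{i=1}^n\Hom_{\hecke{\alpha_i}}(V_1^{\otimes\alpha_i},V_m^{\otimes\alpha_i}).
\]
The right-hand side is $\Gamma^{\alpha_1}_q\otimes\cdots\otimes\Gamma^{\alpha_n}_q$ by the very definition of the tensor product on $\quantumfunctor{}$: morphisms of $\source{d}$ act through their restriction to the parabolic subalgebra. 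This treats all parts at once, so no separate one-part case or weight bookkeeping is needed.

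The rest of your argument is fine as stated: projective generation for $n\geq d$, the dualization to injective cogenerators with $\dim\Hom(F,S^\alpha_q)=\dim F_\alpha$, and the PBW count giving $\dim (S^d_q)_\alpha=1$.
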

\begin{proof}
    See \cite[Proposition 3.4 et 3.5]{théo2025extgroupcategoryquantumpolynomial}.
\end{proof}

% \begin{proposition}\label{prop:cogenerator}
%     The functors $S^\alpha_q$, where $\alpha$ runs over all compositions of $d$, form a cogenerating family of injective objects of $\quantumfunctor{d}$.
% \end{proposition}
% \begin{proof}
%     See \cite[Proposition 3.4]{théo2025extgroupcategoryquantumpolynomial}.
% \end{proof}
% The two following propositions link the functors $S^\alpha_q$ with the notion of weight introduced in definition \ref{def:weight}. Recall that $\Omega(d,n)$ denote the set of compositions of $d$ in $n$ parts.
% \begin{proposition}\label{prop:weightSd}
%     For any $\alpha \in \Omega(d,n)$,
%     \[
%         \dim((S^d_q)_\alpha) = 1.
%     \]
% \end{proposition}
% \begin{proof}
%     See \cite[Proposition 3.5]{théo2025extgroupcategoryquantumpolynomial}.
% \end{proof}

% \begin{proposition}\label{prop:symmetric and weight}
%     For any quantum polynomial functor $F \in \quantumfunctor{}$, and any $\alpha \in \Omega(d,n)$,
%     \[
%         \dim \Hom_{\quantumfunctor{}}(F,S^\alpha_q) = \dim F_\alpha.
%     \]
% \end{proposition}
% \begin{proof}
%     See \cite[Proposition 3.4]{théo2025extgroupcategoryquantumpolynomial}.
% \end{proof}

\subsection{Quantum Frobenius twist}\label{sec:twist}

From now on and in the rest of this article, $q$ is a primitive $\ell$-th root of unity, with $\ell$ odd.
\begin{definition}\label{def:twist}
    There is a fully faithful and exact functor $-^{(1)_q} : \polyfunctor{d} \to \quantumfunctor{dl}$ called the quantum Frobenius twist.
\end{definition}
For more details, refer to \cite[Section 4]{théo2025extgroupcategoryquantumpolynomial}. Our goal is to compute the effect of this functor on Ext-groups. We will first need some properties of the quantum Frobenius twist. We first introduce some vocabulary.
\begin{definition}
    We say that $\ell$ divides a composition $\alpha = (\alpha_1,...,\alpha_n) \in \Omega(d,n)$ if $\ell$ divides each of the $\alpha_i$. Note that this implies that $\ell$ divides $d$. We also write $\ell \alpha = (\ell \alpha_1,...,\ell \alpha_n) \in \Omega(d\ell,n)$. Thus, $\ell$ divides $\alpha$ if and only if there exist a composition $\alpha'$ such that $\alpha = \ell \alpha'$.
\end{definition}

\begin{proposition}\label{prop:twist}
    For $F,G \in \polyfunctor{}$,
    \[
        (F \otimes G)^{(1)_q} \cong F^{(1)_q} \otimes G^{(1)_q}
    \]
    Moreover, for any composition $\alpha$, if $\ell$ does not divide $\alpha$, then $(F^{(1)_q})_\alpha = 0$, and if $\alpha = \ell \alpha'$, then $(F^{(1)_q})_\alpha = F_{\alpha'}$.
\end{proposition}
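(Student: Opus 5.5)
We have to prove Proposition \ref{prop:twist} using the construction of the quantum Frobenius twist from \cite[Section 4]{théo2025extgroupcategoryquantumpolynomial}. I recall that construction as follows. There is a quantum Frobenius morphism from the quantum coordinate algebra $A_q(n)$ (quantum $n\times n$ matrices) to the classical coordinate algebra $A_1(n)$. Dually, there is an algebra morphism
\[
\phi_{n,d}: S_q(n;d\ell)\to S_1(n;d)=S(n;d).
\]
On matrix coefficients it sends $x_{ij}\mapsto x_{ij}^\ell$, and it lands in the central Hopf subalgebra spanned by $\ell$-th powers. For $F\in\polyfunctor{d}$, the functor $\twist{F}$ is characterised (for $n\ge d\ell$, and compatibly in $n$) by $\twist{F}(n)=F(n)$ with $S_q(n;d\ell)$ acting through $\phi_{n,d}$. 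I will work at this level of modules, using the equivalence $F\mapsto F(n)$ for $n$ large.

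\textbf{Weights.} The key step is to compute $\phi_{n,d}(\xi_\alpha)$ for $\alpha\in\Omega(d\ell,n)$. The idempotent $\xi_\alpha$ is the element of $S_q(n;d\ell)=A_q(n;d\ell)^*$ dual to the diagonal monomial $x_{11}^{\alpha_1}\cdots x_{nn}^{\alpha_n}$. Equivalently, $\xi_\alpha$ is the projection onto the span of the basis tensors of $V_n^{\otimes d\ell}$ whose content is $\alpha$. The map $\phi$ is dual to Frobenius $x_{ij}\mapsto x_{ij}^\ell$. Pulling $\xi_\alpha$ back along Frobenius, we evaluate it on images of monomials, and the only diagonal monomials in the image are $x_{11}^{\ell\beta_1}\cdots x_{nn}^{\ell\beta_n}$. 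Hence
\[
\phi(\xi_\alpha)=\xi_{\alpha'} \text{ if } \alpha=\ell\alpha', \qquad \phi(\xi_\alpha)=0 \text{ otherwise}.
\]
To make this rigorous, the cleanest route is through the torus: $\xi_\alpha$ is the weight-$\alpha$ projector for the quantum torus, and Frobenius restricts on the diagonal torus to $t_i\mapsto t_i^\ell$. Both weight claims then follow at once, since $(\twist{F})_\alpha=\phi(\xi_\alpha)F(n)$. For $n$ smaller than $d\ell$, I use compatibility of weight spaces with restriction to fewer variables: $F(n)_\alpha$ depends only on the nonzero part of $\alpha$. This step, identifying $\phi(\xi_\alpha)$, is where I expect the main obstacle. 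It requires care with the explicit quantum Frobenius formula (Lusztig / Parshall–Wang), in particular checking that the cross terms in $q$-binomial expansions vanish because $\binom{\ell}{k}_q=0$ for $0<k<\ell$.

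\textbf{Tensor compatibility.} The monoidal structure on $\quantumfunctor{}$ comes from the coalgebra structure $A_q(n;a+b)\to A_q(n;a)\otimes A_q(n;b)$, and likewise at $q=1$. The Frobenius map $A_1(n)\to A_q(n)$ is a bialgebra morphism; this is the standard fact that quantum Frobenius is a Hopf map. Therefore $\phi$ intertwines the two coproducts
\[
S_q(n;(a+b)\ell)\to S_q(n;a\ell)\otimes S_q(n;b\ell), \qquad S(n;a+b)\to S(n;a)\otimes S(n;b).
\]
So the identity map of $F(n)\otimes G(n)$ is an isomorphism $(F\otimes G)^{(1)_q}(n)\cong \twist{F}(n)\otimes\twist{G}(n)$ of $S_q(n;(a+b)\ell)$-modules. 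This isomorphism is natural and compatible with changing $n$, so it gives an isomorphism of functors.

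As a cross-check, the weight statement is consistent with Proposition \ref{prop:weighttensor}. Both sides of the tensor isomorphism have weight spaces $\bigoplus_{\alpha'^1+\alpha'^2=\alpha'}F_{\alpha'^1}\otimes G_{\alpha'^2}$. On the twisted side, a decomposition $\alpha^1+\alpha^2=\ell\alpha'$ in which some $\alpha^i$ is not divisible by $\ell$ contributes zero.
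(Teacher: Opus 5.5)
The paper gives no argument for this proposition: it cites Proposition 4.3 of the author's earlier paper, where the twist is actually constructed. This paper itself only asserts that the twist exists.

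Your proposal is a correct, self-contained proof along the standard lines. You realize $\twist{F}(n)$ as $F(n)$ pulled back along the transpose $\phi\colon S_q(n;d\ell)\to S(n;d)$ of quantum Frobenius $x_{ij}\mapsto x_{ij}^{\ell}$. You compute $\phi(\xi_\alpha)$ through the diagonal torus, where Frobenius becomes $t_i\mapsto t_i^{\ell}$. You then get the tensor isomorphism from the compatibility of $\phi$ with the maps $S_q(n;a+b)\to S_q(n;a)\otimes S_q(n;b)$, together with the equivalence $F\mapsto F(n)$ for $n$ large. Compared with the bare citation, this makes explicit which properties of Frobenius are used. The price is that it relies on your recollection of how the twist is defined, which this paper never states.

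Two points should be corrected. Neither breaks the argument, because you assume Frobenius is a bialgebra map.

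First, there is no coalgebra map $A_q(n;a+b)\to A_q(n;a)\otimes A_q(n;b)$. The comultiplication of $A_q(n)$ preserves degree and dualizes to the product of the Schur algebra. The maps $S_q(n;a+b)\to S_q(n;a)\otimes S_q(n;b)$ defining the monoidal structure are instead transposes of the \emph{multiplication} $A_q(n;a)\otimes A_q(n;b)\to A_q(n;a+b)$. So the tensor isomorphism uses multiplicativity of Frobenius, i.e.\ that the $x_{ij}^{\ell}$ pairwise commute.

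Second, you place the main obstacle in the wrong step. The vanishing of the Gaussian binomials $\binom{\ell}{k}$ for $0<k<\ell$ (at $q^{2}$ or $q$ depending on conventions; both are primitive $\ell$-th roots since $\ell$ is odd) plays no role in identifying $\phi(\xi_\alpha)$, which is immediate via the torus. What it gives is $\Delta(x_{ij}^{\ell})=\sum_k x_{ik}^{\ell}\otimes x_{kj}^{\ell}$, i.e.\ that $\phi$ is an algebra map at all. You need that before $\twist{F}(n)$ is even an $S_q(n;d\ell)$-module.
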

\begin{proof}
    See \cite[Proposition 4.3]{théo2025extgroupcategoryquantumpolynomial}.
\end{proof}
The following proposition gives the two first steps of finding an injective coresolution of the functors $S^{d(1)_q} \in \quantumfunctor{dl}$.
\begin{proposition}\label{prop:start}
    For $d \geq 0$, we have an injective morphism $\varphi_d : S^{d(1)_q} \to S^{d \ell}_q$, whose image is the kernel of the coproduct $\Delta^{(d\ell - 1,1)} : S^{d \ell}_q \to S^{d\ell - 1}_q \otimes S^1_q$ (see \cite[Proposition 3.2]{théo2025extgroupcategoryquantumpolynomial}).
\end{proposition}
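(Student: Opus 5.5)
Since the image of $\varphi_d$ is closed under the $S_q(n;d\ell)$-action, I will check the claim $\Image\varphi_d=\ker\Delta^{(d\ell-1,1)}$ weight space by weight space. Both $\Image \varphi_d$ and $\ker \Delta^{(d\ell-1,1)}$ are subfunctors of $S^{d\ell}_q$, and a subfunctor is determined by its weight spaces $(-)_\alpha\subseteq (S^{d\ell}_q)_\alpha$ for $\alpha\in\Omega(d\ell,n)$ with $n\ge d\ell$. So I will fix $n\geq d\ell$ and compare the two subspaces in each weight $\alpha$. By Proposition \ref{prop:symmetric power property}, $(S^{d\ell}_q)_\alpha$ is one-dimensional, spanned by the monomial $x^\alpha=x_1^{\alpha_1}\cdots x_n^{\alpha_n}$ in the quantum symmetric algebra (with the standard convention $x_jx_i=qx_ix_j$ for $i<j$). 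By Proposition \ref{prop:twist}, $(S^{d(1)_q})_\alpha$ is zero unless $\alpha=\ell\alpha'$, and it is one-dimensional in that case. Since $\varphi_d$ is injective and weight preserving, its image is exactly the span of the monomials $x^{\ell\alpha'}$.

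It remains to show that $\ker\Delta^{(d\ell-1,1)}$ is spanned by the same monomials. Using Proposition \ref{prop:weighttensor}, I decompose
\[
(S^{d\ell-1}_q\otimes S^1_q)_\alpha=\bigoplus_{i:\alpha_i>0}(S^{d\ell-1}_q)_{\alpha-\epsilon_i}\otimes (S^1_q)_{\epsilon_i},
\]
each summand being spanned by $x^{\alpha-\epsilon_i}\otimes x_i$. The coproduct is the algebra map extending $x_i\mapsto x_i\otimes1+1\otimes x_i$ into the braided tensor product of quantum symmetric algebras. A $q$-binomial computation then gives
\[
\Delta^{(d\ell-1,1)}(x^\alpha)=\sum_{i:\alpha_i>0} c_i\,[\alpha_i]_{q}\; x^{\alpha-\epsilon_i}\otimes x_i ,
\]
where the $c_i$ are powers of $q$ coming from commuting $x_i$ past $x_{i+1},\dots,x_n$, and $[\alpha_i]_q=1+q+\dots+q^{\alpha_i-1}$ (possibly with $q^2$ instead of $q$, depending on the normalisation of the braiding). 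Because $q$ is a primitive $\ell$-th root of unity with $\ell$ odd, $q^2$ is also primitive of order $\ell$, so $[m]_q=0$ if and only if $\ell\mid m$. Hence $\Delta^{(d\ell-1,1)}(x^\alpha)=0$ if and only if $\ell$ divides every $\alpha_i$. The terms for different $i$ lie in distinct direct summands, so no cancellation can occur between them. This gives $\ker\Delta_\alpha=\Image(\varphi_d)_\alpha$ in every weight, hence the equality of subfunctors.

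The main obstacle is the precise coproduct formula. One has to pin down the braiding conventions in \cite{hong2017quantum} and \cite{théo2025extgroupcategoryquantumpolynomial} well enough to be sure the coefficient is a unit times a quantum integer in $q$ or $q^2$, and not some expression that could vanish for another reason. I would first verify this for $n=1$: there $\Delta(x^m)=\sum_k\binom{m}{k}_{q^{\pm1}\text{ or }q^{\pm2}}x^{m-k}\otimes x^k$, so the $(m-1,1)$ component is exactly $[m]$. The general case then follows by multiplicativity, since the extra factors from the braiding are only powers of $q$ and hence units. If the definition of $\varphi_d$ in the cited source is instead given via $\Gamma$ and duality, I would dualise and run the same weight argument for the product $\Gamma^{d\ell-1}_q\otimes\Gamma^1_q\to\Gamma^{d\ell}_q$.
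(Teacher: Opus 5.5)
Your argument is correct in substance, but it takes a genuinely different route from the paper's. The paper does no computation here. It takes $\varphi_d$ and its injectivity from Proposition 4.2 of the earlier paper. It then reads off $\Image\varphi_d=\ker\Delta^{(d\ell-1,1)}$ from Theorem 5.6 there, where $\Delta^{(d\ell-1,1)}$ is the first differential of a complex $\Omega^*_{d\ell}$ proved to coresolve $S^{d(1)_q}$. You instead argue weight by weight, using only two facts: the weight spaces of $S^{d\ell}_q$ are lines, and the relevant quantum integer vanishes exactly when $\ell\mid m$. This makes the ``image $=$ kernel'' part self-contained and shows where the content sits. Independently of $\varphi_d$, your computation shows that $\ker\Delta^{(d\ell-1,1)}$ has the same weights as $S^{d(1)_q}$ and contains the image of \emph{every} morphism $S^{d(1)_q}\to S^{d\ell}_q$. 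So the equality is equivalent to injectivity of $\varphi_d$, which you, like the paper, still import from the cited source. The citation gives more, namely exactness of $\Omega^*_{d\ell}$ further along, but only its first step is used here.

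One step needs more care than the $n=1$ check you propose. The braiding on $V_n\otimes V_n$ (the standard $R$-matrix) is not a scaled permutation of the monomial basis. For one of the two relative orders of a pair $i\neq j$, it sends $x_i\otimes x_j$ to a multiple of $x_j\otimes x_i$ plus a nonzero multiple of $x_i\otimes x_j$ itself. So suppose you expand $\Delta(x_1)^{\alpha_1}\cdots\Delta(x_n)^{\alpha_n}$ in the wrong order. Braiding the $S^1_q$-letter $x_i$ past some $x_j$ then produces terms in which the $S^1_q$-letter has become $x_j$. Your assertion that the braiding only contributes powers of $q$ is then not justified, and the one-variable case cannot detect this.

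The repair is easy. Write the monomial in the order (increasing or decreasing index) for which every braiding you use is of pure-swap type $x_i\otimes x_j\mapsto c\,x_j\otimes x_i$. The two orders of a monomial differ by a power of $q$, so you do obtain
\[
\Delta^{(d\ell-1,1)}(x^\alpha)=\sum_i u_i\,(\alpha_i)_{q^2}\,x^{\alpha-\epsilon_i}\otimes x_i
\]
with units $u_i$. In the paper's normalization the quantum integer is indeed in $q^2$, as relation (\ref{eq:dumbbell1}) and Proposition \ref{prop:Sdescription} show for $n=1$.
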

\begin{proof}
    The morphism $\varphi_d$ is the morphism of \cite[Proposition 4.2]{théo2025extgroupcategoryquantumpolynomial}, the fact that its image is the kernel of $\Delta^{(d\ell-1,1)}$ is part of \cite[Theorem 5.6]{théo2025extgroupcategoryquantumpolynomial}, since $\Delta^{(d\ell-1,1)}$ is the first differential in the complex $\Omega^*_{d\ell}$.
\end{proof}

We conclude this subsection by a useful lemma.
\begin{lemma}\label{lem:Hom(frob,Sq)}
    Let $F \in \polyfunctor{d}$ be a strict polynomial functor, and let $\mu \in \Omega(d \ell,n)$ be a composition. Then we have isomorphisms, natural in $F$ :
    \[
        \Hom_{\quantumfunctor{}}(F^{(1)_q},S^\mu_q) \simeq \left \{ \begin{array}{cl}
            \Hom_{\polyfunctor{}}(F,S^{\frac{\mu}{\ell}}) & \mbox{if } \ell \mbox{ divides } \mu, \\
            0 & \mbox{otherwise.}
        \end{array} \right .
    \]
\end{lemma}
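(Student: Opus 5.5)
The plan is to compare dimensions using Proposition \ref{prop:symmetric power property}, and then to build an explicit natural map that we show is injective (or surjective). First, by Proposition \ref{prop:symmetric power property} we have $\dim \Hom_{\quantumfunctor{}}(F^{(1)_q},S^\mu_q) = \dim (F^{(1)_q})_\mu$. Proposition \ref{prop:twist} then says this weight space is $0$ when $\ell$ does not divide $\mu$, which settles the second case immediately. Now suppose $\mu = \ell\mu'$. Proposition \ref{prop:twist} gives $(F^{(1)_q})_\mu = F_{\mu'}$. The classical $q=1$ case of Proposition \ref{prop:symmetric power property} gives $\dim \Hom_{\polyfunctor{}}(F,S^{\mu'}) = \dim F_{\mu'}$. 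So both sides have the same finite dimension, and it remains to produce a natural map between them that is injective.

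For the natural map, we use the quantum Frobenius twist together with Proposition \ref{prop:start}. Since $S^{\mu'}$ is a tensor product of the $S^{\mu'_i}$, Proposition \ref{prop:twist} gives $(S^{\mu'})^{(1)_q} \cong \bigotimes_i S^{\mu'_i(1)_q}$. Tensoring the injections $\varphi_{\mu'_i}$ yields a map
\[
\varphi_{\mu'} = \varphi_{\mu'_1}\otimes\cdots\otimes\varphi_{\mu'_n} : (S^{\mu'})^{(1)_q} \to S^{\mu}_q,
\]
which is injective because we are tensoring injective maps of vector spaces objectwise. We then define
\[
\Phi_F : \Hom_{\polyfunctor{}}(F,S^{\mu'}) \to \Hom_{\quantumfunctor{}}(F^{(1)_q},S^\mu_q), \qquad f \mapsto \varphi_{\mu'} \circ f^{(1)_q}.
\]
This is clearly natural in $F$. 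It is injective because the twist is faithful (Definition \ref{def:twist}) and $\varphi_{\mu'}$ is a monomorphism. Comparing dimensions then shows $\Phi_F$ is an isomorphism.

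The step to check carefully is the claim that $\varphi_{\mu'}$ is a monomorphism in $\quantumfunctor{}$. This needs the tensor product of quantum polynomial functors to be computed objectwise, at least on its underlying vector spaces, as a tensor product of vector spaces, which should hold by construction. Finite-dimensionality of all Hom spaces, which makes the dimension argument valid, follows from $F(n)$ being finite dimensional.
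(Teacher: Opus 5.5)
Your proof is correct and follows essentially the same route as the paper. The non-divisible case comes from the weight-space dimension count of Propositions \ref{prop:symmetric power property} and \ref{prop:twist}, and the divisible case from post-composing twisted maps with the monomorphism $\varphi_{\mu'}=\varphi_{\mu'_1}\otimes\cdots\otimes\varphi_{\mu'_n}$ and concluding by equality of dimensions. The only inessential differences are that you need only faithfulness of the twist where the paper invokes full faithfulness, and that you justify why $\varphi_{\mu'}$ is a monomorphism, which the paper takes for granted.
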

% \begin{lemma}\label{lem:zerofrob}
%     Let $F \in \polyfunctor{d}$ be a strict polynomial functor, and let $\mu \in \Omega(d \ell,n)$ be a composition. If $\ell$ does not divide $\mu$, then $\Hom_{\quantumfunctor{}}(F^{(1)_q},S^\mu_q) = 0$.
% \end{lemma}
% \begin{proof}
%     This is a direct consequence of propositions \ref{prop:twist} and \ref{prop:symmetric and weight}.
% \end{proof}

% \begin{lemma}\label{lem:isofrob}
%     Let $F \in \polyfunctor{d}$ be a strict polynomial functor. For any composition $\mu \in \Omega(d,n)$, we have isomorphisms
%     \[
%         \Hom_{\polyfunctor{}}(F,S^{\mu}) \cong \Hom_{\quantumfunctor{}}(F^{(1)_q},S^{\mu(1)_q}) \cong \Hom(F^{(1)_q},S^{\ell \mu}_q)
%     \]
%     natural in $F$.
% \end{lemma}
\begin{proof}
    If $\ell$ does not divide $\mu$, the result is a direct consequences of propositions \ref{prop:symmetric power property} and \ref{prop:twist}.

    Now suppose that $\ell$ divides $\mu$, say $\mu = \ell \nu$
    Since the quantum Frobenius twist is fully faithful, it induces an isomorphism 
    \[
        \Hom_{\polyfunctor{}}(F,S^{\nu}) \cong \Hom_{\quantumfunctor{}}(F^{(1)_q},S^{\nu(1)_q}).
    \]
    Moreover, we have an injective natural transformation $\varphi_\nu = \varphi_{\nu_1} \otimes \cdots \otimes \varphi_{\nu_n} : S^{\nu(1)_q} \to S^{\mu}_q$. Post-composition by $\varphi_\nu$ induces an injective morphism $\Hom_{\quantumfunctor{}}(F^{(1)_q},S^{\nu(1)_q}) \hookrightarrow \Hom(F^{(1)_q},S^{\mu}_q)$. Now, by propositions \ref{prop:symmetric power property} and \ref{prop:twist},
    \[
        \dim \Hom_{\polyfunctor{}}(F,S^{\nu}) = \dim \Hom(F^{(1)_q},S^{\mu}_q)
    \]
    and hence, the post-composition is in fact an isomorphism. The naturality in $F$ follows from the fact that it is a post-composition.
\end{proof}

\subsection{$\ell$-complexes}

Our quantum Troesch complexes will be $\ell$-complexes, so we give here everything the reader needs to know about $\ell$-complexes to understand this article. We work in an abelian monoidal $\field$-linear category, with biexact tensor product.

\begin{definition}
    A $\ell$-complex is a graded object $C^* = \bigoplus_{n \geq 0} C^n$ equipped with a $\ell$-differential, that is a morphism $d_C$ of degree $1$ such that $d_C^\ell = 0$.
\end{definition}
We can define the cohomology groups of $C$ as follows.
\begin{definition}
    Let $C$ be a $\ell$-complex. For all $s \in \{1,...,\ell-1\}$ and all $i \geq 0$, we define
    \[
        H^i_{[s]}(C) = \frac{\ker(\delta_C^s : C^i \to C^{i+s})}{\Image(\delta_C^{\ell-s} : C^{i-\ell+s} \to C^i)}.
    \]
    The $\ell$-complex $C$ is said to be acyclic if $H^i_{[s]}(C) = 0$ for all $s \in \{1,...,\ell-1\}$ and all $i \geq 0$. It is said to be a $\ell$-coresolution of $F$ if 
    \[
        H^i_{[s]}(C) = \left \{ \begin{array}{cl}
            F & \mbox{if } i=0, \\
            0 & \mbox{otherwise,}
        \end{array} \right .
    \]
    for all $s \in \{1,...,\ell-1\}$.
\end{definition}
We give some examples, which will be the basis of our computations of cohomology groups for more difficult $\ell$-complexes.
\begin{example}\label{example:lacyclic}
    Let $C$ be the $\ell$-complex of $\field$-vector spaces
    \[
        C : \underbrace{\field \xrightarrow{=} \field \xrightarrow{=} \cdots \xrightarrow{=} \field}_\ell .
    \]
    Then $C$ is acyclic.
\end{example}
\begin{example}\label{example:lcoresolution}
    Let $C$ be the $\ell$-complex of $\field$-vector spaces concentrated in degree $0$ with $C^0 = \field$. Then $C$ is a $\ell$-coresolution of $\field$.
\end{example}
Our computations of cohomology in theorem \ref{theo:cohomologyBd} will consist of using short exact sequences to bring back our computation to direct sums of the above complexes (with shifts in the degree). To use the short exact sequences, we will only need the following lemma.
\begin{lemma}\label{lem:short}
    Let
    \[
        0 \to C' \to C \to C'' \to 0
    \]
    a short exact sequence of $\ell$-complexes. Then
    \begin{itemize}
        \item if $C''$ and $C'$ are acyclic, then so is $C$ ;
        \item if $C'$ is acyclic, and $C''$ is a $\ell$-coresolution of $F$, then $C$ is also a $\ell$-coresolution of $F$.
    \end{itemize}
\end{lemma}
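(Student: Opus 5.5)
The plan is to reduce everything to the long exact sequence, or more precisely the exact hexagon, of cohomology attached to a short exact sequence of $\ell$-complexes. This is the standard tool in the theory of $\ell$-complexes (Mayer, Kapranov). Given $0 \to C' \xrightarrow{f} C \xrightarrow{g} C'' \to 0$ and $s \in \{1,\dots,\ell-1\}$, I would first check that $f$ and $g$ induce maps $H^i_{[s]}(C') \to H^i_{[s]}(C) \to H^i_{[s]}(C'')$. These maps exist because $f,g$ commute with the $\ell$-differentials, so they preserve both $\ker \delta^s$ and $\Image \delta^{\ell-s}$.

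Next I would construct connecting maps
\[
\partial : H^i_{[s]}(C'') \to H^{i+s}_{[\ell-s]}(C').
\]
Take a class $[z]$ with $\delta^s z = 0$ in $C''$, lift $z$ to $y \in C^i$, and note that $g(\delta^s y)=0$. Hence $\delta^s y = f(x)$ for a unique $x \in C'^{i+s}$. Moreover $f(\delta^{\ell-s}x) = \delta^\ell y = 0$, and $f$ is injective, so $\delta^{\ell-s}x=0$. We then set $\partial[z]=[x]$. The routine verifications are:
\begin{itemize}
\item $\partial$ is well defined: changing the lift $y$ by $f(c)$ changes $x$ by $\delta^s c$, and changing $z$ by $\delta^{\ell-s}w$ changes $x$ by an element of $\Image\delta^{\ell-s}$ as well as possibly by $\delta^s c$.
\item The resulting diagram is exact at every vertex:
\[
H^i_{[s]}(C')\to H^i_{[s]}(C)\to H^i_{[s]}(C'')\xrightarrow{\partial} H^{i+s}_{[\ell-s]}(C')\to H^{i+s}_{[\ell-s]}(C)\to H^{i+s}_{[\ell-s]}(C'')\xrightarrow{\partial} H^{i+\ell}_{[s]}(C').
\]
This is a diagram chase entirely analogous to the snake lemma, carried out in our abelian category; one can argue with elements via the Freyd--Mitchell embedding, or directly in $\field$-vector spaces, which is all we need here.
\end{itemize}
I expect this exactness check to be the only real obstacle. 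The subtle spot is exactness at $H^i_{[s]}(C)$, where one must use both powers $\delta^s$ and $\delta^{\ell-s}$ to show that a class mapping to zero modulo $\Image \delta^{\ell-s}$ comes from $C'$.

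Granting the hexagon, both claims follow. If $C'$ and $C''$ are acyclic, then every $H^i_{[s]}(C)$ sits between two zero groups in an exact sequence, so it vanishes. For the second claim, $C'$ is acyclic, so all terms $H^*_{[*]}(C')$ vanish and $g$ induces isomorphisms $H^i_{[s]}(C)\cong H^i_{[s]}(C'')$ for all $i$ and $s$. Therefore $C$ has cohomology $F$ in degree $0$ and $0$ elsewhere, for each $s$. Moreover, the isomorphism $H^0_{[s]}(C)\cong F$ is induced by $g$, so the identification is compatible across the different $s$, which is what one would want for a coresolution.
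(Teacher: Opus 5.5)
Your proof is correct and is essentially the paper's. Both rest on the exact sequence $H^i_{[s]}(C')\to H^i_{[s]}(C)\to H^i_{[s]}(C'')\to H^{i+s}_{[\ell-s]}(C')$; the paper simply gets it for free by viewing $C^{i-\ell+s}\xrightarrow{\delta^{\ell-s}}C^i\xrightarrow{\delta^s}C^{i+s}\xrightarrow{\delta^{\ell-s}}C^{i+\ell}$ as an ordinary complex and invoking the usual long exact sequence (your $\partial$ is exactly its connecting map), so the diagram chase you flag as the main obstacle can be skipped.

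One small correction to your well-definedness check. If $z$ is replaced by $z+\delta^{\ell-s}w$ and $w$ is lifted to $v$, then $y+\delta^{\ell-s}v$ is a lift with $\delta^s(y+\delta^{\ell-s}v)=\delta^s y$, so $x$ does not change at all. This matters, since a change by an element of $\Image\delta^{\ell-s}$ would not be killed in $H^{i+s}_{[\ell-s]}(C')=\ker\delta^{\ell-s}/\Image\delta^s$.
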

\begin{proof}
    Consider the ordinary complex $\underline{C}$ given by
    \[
        \underline{C} : C^{i - \ell + s} \to C^i \to C^{i+s} \to C^{i+\ell}
    \]
    (with $C^j = 0$ for $j<0$), and consider also $\underline{C'}$ and $\underline{C''}$ defined in a similar way. Then the short exact sequence of $\ell$-complexes gives a short exact sequence of ordinary complexes
    \[
        0 \to \underline{C'} \to \underline{C} \to \underline{C''} \to 0.
    \]
    The long exact sequence associated gives an exact sequence
    \[
        H^{i}_{[s]}(C') \to H^{i}_{[s]}(C) \to H^{i}_{[s]}(C'') \to H^{i+s}_{[l-s]}(C').
    \]
    The lemma follows.
\end{proof}

From a $\ell$-complex, we can construct several ordinary complexes as follows. Let $C$ be a $\ell$-complex. For any integer $s \in \{1,...,\ell-1\}$, we define 
\[
    C_{[s]} : C^0 \xrightarrow{d^s} C^s \xrightarrow{d^{\ell - s}} C^l \xrightarrow{d^s} C^{l+s} \xrightarrow{d^{\ell - s}} C^{2l} \xrightarrow{d^s} \cdots
\]
With this method, we will construct injective coresolutions of our quantum polynomial functors $\twist{(S^d)}$ from $\ell$-coresolutions of $\twist{(S^d)}$.

To obtain $\ell$-coresolutions of the functors $S^{\alpha(1)_q}$ from the $\ell$-coresolutions of $S^{\alpha_1(1)_q},...,S^{\alpha_n(1)_q}$, we will use the following construction.

\begin{definition}\label{def:ltensor}
    Let $C,D$ be two $\ell$-complexes. We define an $\ell$-complex $C \otimes D$ by
    \[
        (C \otimes D)^k = \bigoplus_{i+j=k} C^i \otimes D^j \quad \mbox{with} \quad d_{|C^i \otimes D^j} = (d_C \otimes 1) + q^{2i} (1 \otimes d_D).
    \]
\end{definition}
To see that it defines a $\ell$-complex, let $f,g$ be the endomorphisms of $C \otimes D$ given by
\[
    f_{|C^i \otimes D^j} = (d_C \otimes 1) \quad \mbox{and} \quad g_{|C^i \otimes D^j} = q^{2i} (1 \otimes d_D)
\]
so that $d_{C \otimes D} = f + g$. Then, since $g \circ f = q^2 f \circ g$, we have
\[
    d^\ell_{C \otimes D} = \sum_{k=0}^l \binom{\ell}{k}_{q^2} f^k g^{l-k} = f^\ell + g^\ell = 0
\]
(see \cite[Section 7.1 and Corollary 7.1.3]{parshall1991quantum}).

\begin{remark}
    A natural choice for the differential of $C \otimes D$ would be
    \[
        d_{|C^i \otimes D^j} = d_C \otimes 1 + q^i (1 \otimes d_D) \ .
    \]
    This choice is done e.g in \cite{duboisviolette2009tensorproductncomplexesgeneralization}. We use $q^{2i}$ instead of $q^i$ in order to ensure compatibility of differentials and products in lemma \ref{lem:prodB}.
\end{remark}

We have the following weak version of Künneth formula.
\begin{proposition}\label{prop:lkunneth}
    If $C$ and $D$ are $\ell$-coresolutions of $F$ and $G$ respectively, then $C \otimes D$ is an $\ell$-coresolution of $F \otimes G$.
\end{proposition}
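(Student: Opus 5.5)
We prove the statement by reducing, through the short exact sequence lemma \ref{lem:short}, to the elementary $\ell$-complexes of examples \ref{example:lacyclic} and \ref{example:lcoresolution}. The key structural fact we use is that, in the abelian $\field$-linear category with biexact tensor product, an $\ell$-complex can be filtered by subcomplexes so that the successive quotients are elementary. The elementary pieces are of two kinds: \emph{contractible strings} $X \xrightarrow{=} X \xrightarrow{=} \cdots \xrightarrow{=} X$ of length $\ell$ (suitably shifted), and \emph{truncated strings} of length $<\ell$.

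\textbf{Step 1 (tensoring with an acyclic object).} First we prove that if $D$ is acyclic then $C \otimes D$ is acyclic, for any $\ell$-complex $C$. The twisted differential $(d_C\otimes 1)+q^{2i}(1\otimes d_D)$ is compatible with subcomplexes in each variable. Hence any filtration of $C$ by degree truncations $C^{\geq k}$ (which are subcomplexes, with quotients concentrated in a single degree) induces a filtration of $C\otimes D$. Its graded pieces are $C^k\otimes D$, shifted, with differential $q^{2k}(1\otimes d_D)$. Since $q^{2k}$ is a unit, this piece has the same kernels and images of powers as $C^k \otimes D$ with differential $1\otimes d_D$. Because $C^k\otimes -$ is exact, $H_{[s]}(C^k\otimes D)=C^k\otimes H_{[s]}(D)=0$. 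Applying lemma \ref{lem:short} repeatedly (the filtration is finite in each total degree, which suffices since cohomology in degree $i$ only involves degrees $\le i+\ell$) shows $C\otimes D$ is acyclic. By symmetry, the same holds if $C$ is acyclic, filtering $D$ by $D^{\geq k}$ instead. The pieces are then $C\otimes D^k$ with differential $d_C\otimes 1$ and no twist, so the argument is the same.

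\textbf{Step 2 (reduction to the concentrated case).} Now let $C$ be an $\ell$-coresolution of $F$. We produce a short exact sequence $0\to F[0]\to C\to A\to 0$ with $A$ acyclic, where $F[0]$ denotes $F$ in degree $0$. Since $H^0_{[s]}(C)=\ker(d^s\colon C^0\to C^s)=F$ for every $s$, in particular $F=\ker d\subset C^0$. So $F[0]$ is a subcomplex, and by example \ref{example:lcoresolution} it is an $\ell$-coresolution of $F$. The quotient $A$ is acyclic: this follows from the long exact sequence of lemma \ref{lem:short}'s proof, using that the connecting maps between the two copies of $F$ in degree $0$ are isomorphisms. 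Any leftover cohomology of $A$ would have to sit in degrees where both $F[0]$ and $C$ have none, or be killed by these isomorphisms. I expect this check, which runs over all $s$ and low degrees $i<\ell$, to be the main obstacle. If it proves messy, the alternative is to choose $A$ as the cokernel and verify acyclicity directly from the definitions of $H_{[s]}$ in degrees $0,\dots,\ell-1$.

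\textbf{Step 3 (conclusion).} Do the same for $D$: $0\to G[0]\to D\to B\to 0$ with $B$ acyclic. By biexactness we get short exact sequences
\[
0\to C\otimes G[0]\to C\otimes D\to C\otimes B\to 0,\qquad 0\to F[0]\otimes G[0]\to C\otimes G[0]\to A\otimes G[0]\to 0 .
\]
The differentials are compatible because $G[0]\subset D$ and $F[0]\subset C$ are subcomplexes. By Step 1, $C\otimes B$ and $A\otimes G[0]$ are acyclic. Meanwhile $F[0]\otimes G[0]=(F\otimes G)[0]$ is an $\ell$-coresolution of $F\otimes G$ by example \ref{example:lcoresolution}. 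Two applications of the second point of lemma \ref{lem:short} show first that $C\otimes G[0]$, and then that $C\otimes D$, is an $\ell$-coresolution of $F\otimes G$.
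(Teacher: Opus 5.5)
Your argument is correct, and it takes a different, self-contained route. The paper gives no argument of its own here: it only asserts that Troesch's proof of his Theorem 2.3.1 adapts to the twisted differential $d_C\otimes 1+q^{2i}(1\otimes d_D)$ without significant change. You instead work directly in the abelian monoidal category, in two moves. First, the degree filtration $C^{\geq k}\otimes D$ (resp.\ $C\otimes D^{\geq k}$) reduces acyclicity of $C\otimes D$ to the pieces $C^k\otimes D$ with differential $q^{2k}(1\otimes d_D)$ (resp.\ $C\otimes D^k$ with $d_C\otimes 1$), and exactness of $C^k\otimes -$ (resp.\ $-\otimes D^k$) finishes. Second, the sequences $0\to F[0]\to C\to A\to 0$ reduce the coresolution statement to the acyclic one without choosing any splittings. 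Your route works in any abelian $\field$-linear monoidal category with biexact tensor product. It also shows exactly why the twist is harmless: on each graded piece it is a constant unit. That is precisely the verification the paper leaves to the reader.

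Two points need tidying.

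\textbf{Which version of lemma \ref{lem:short} you use.} In Step 3 both short exact sequences have the coresolution as the subcomplex and the acyclic complex as the quotient. That is the mirror image of the second point of lemma \ref{lem:short}, not the point itself. The mirror version holds by the same method. The four-term complex at index $(i-\ell+s,\ell-s)$, namely $C^{i-\ell}\to C^{i-\ell+s}\to C^i\to C^{i+s}$, gives an exact segment $H^{i-\ell+s}_{[\ell-s]}(C'')\to H^i_{[s]}(C')\to H^i_{[s]}(C)\to H^i_{[s]}(C'')$. Hence $H_{[s]}(C')\cong H_{[s]}(C)$ whenever $C''$ is acyclic.

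\textbf{Step 2 is easy.} It is not the obstacle you expect, and the relevant maps are those induced by the inclusion $F[0]\to C$, not connecting maps. Since $\ker(d^s|_{C^0})=F$ for every $s$, $d^s$ is injective on $A^0=C^0/F$, so $H^0_{[s]}(A)=0$. For $i>0$ we have $A^i=C^i$ with the same kernels. The only image that could change is that of $d^{\ell-s}$ out of $A^0$, and it equals $d^{\ell-s}(C^0)$ because $F\subseteq\ker d^{\ell-s}$. Hence $H^i_{[s]}(A)=H^i_{[s]}(C)=0$. This reads the hypothesis ``$H^0_{[s]}(C)=F$ for all $s$'' as saying that the nested subobjects $\ker d\subseteq\ker d^s$ of $C^0$ coincide. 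That is automatic for objects of finite length, such as quantum polynomial functors.
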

\begin{proof}
    The definition of the tensor product is different, but there is no significant modification needed to adapt the proof of \cite[Theorem 2.3.1]{troesch2005resolution}.
\end{proof}

\subsection{Ext-computation using quantum Troesch complexes} \label{sec:comp}

Let $d \geq 0$. Consider the functor
\[
    B_d = \bigoplus_{\alpha \in \Omega(d,\ell)} S^\alpha_q.
\]
Define a grading on $B_d$ by letting $S^\alpha_q = S^{\alpha_1}_q \otimes \cdots \otimes S^{\alpha_\ell}_q$ be in degree $\deg(\alpha) = \sum_{i=1}^\ell (i-1)\alpha_i$. 

\begin{definition}\label{def:qTroesch}
    A quantum Troesch complex is an $\ell$-coresolution of $S^{d(1)_q}$ whose underlying graded functor is $B_{\ell d}$.
\end{definition}

Let $\alpha$ be a composition in $n$ parts. Then we define 
    \[
        B_{\alpha} = B_{\alpha_1} \otimes \cdots \otimes B_{\alpha_n}.
    \]
If $\ell$ divides $\alpha$, say $\alpha = \ell \alpha'$, and if each $B_{\alpha_i}$ is endowed with a structure of quantum Troesch complex, then $B_{\alpha}$ is endowed, by definition \ref{def:ltensor}, with a structure of $\ell$-coresolution of $S^{\alpha'(1)_q}$.
% \begin{definition}\label{def:qTroesch}
%     Consider the graded functor
%     \[
%         B_d^* = \bigoplus_{\alpha \in \Omega(d,\ell)} S^\alpha_q
%     \]
%     with $S^\alpha_q$ in degree $\deg(\alpha) = \sum_i (i-1)\alpha_i$. Assume that, for $d \geq 0$, $B_{d\ell}$ can be endowed with an $\ell$-differential making it a $\ell$-coresolution of $S^{d(1)_q}$. Let $\alpha$ be a composition in $n$ parts. Then we define 
%     \[
%         B_{\alpha} = B_{\alpha_1} \otimes \cdots \otimes B_{\alpha_n}.
%     \]
%     and $B_{\ell \alpha}$ is endowed with the structure of an $\ell$-complex as in \ref{def:ltensor}, making it an $\ell$-coresolution by proposition \ref{prop:lkunneth}.
% \end{definition}
\begin{remark}\label{rmk:choiceofthestart}
    Since $\Hom_{\quantumfunctor{}}(S^{d(1)_q},S^{d \ell}_q)$ is one-dimensional by proposition \ref{prop:symmetric power property} and \ref{prop:twist}, the coresolution map $S^{d(1)_q} \to B_{d \ell}$ can be chosen to be $\varphi_d$, and hence the map $S^{\alpha(1)_q} \to B_{\ell \alpha}$ can be chosen to be $\varphi_\alpha$. We will assume that it is the choice we have made in the rest of this article.
\end{remark}
For now, we assume that we have succeeded in building quantum Troesch complexes. We can already use it to compute $\Ext$-group. To this end, we recall the following parametrization of strict polynomial functors, defined in \cite[Section 2.5]{touze2012troesch}.
\begin{definition}\label{def:parametre}
    Let $F$ be a strict polynomial functor, and $V$ be a finite-dimensional graded vector space. We define a graded strict polynomial functor $F_V$ by letting $F_V(W) = F(\bigoplus_i V^i\otimes W)$, and with graduation defined as follows. Let the multiplicative group $\mathbb{G}_m$ acts on each $V^i$ with weight $i$. This endows $F_V(W)$ with a rational action of $\mathbb{G}_m$, and hence $F_V(W)$ has a weight space decomposition $F_V(W) = \bigoplus_i F_V(W)^i$. Then $(F_V)^i(W) = F_V(W)^i$.
\end{definition}
\begin{remark}\label{rmk:parametrenaturality}
    If $f : F \to G$ is a natural transformation, then $f_V = f(V^* \otimes -) : F_V \to G_V$ is also a natural transformation.
\end{remark}

This parametrization satisfies the following properties, which can be found in \cite[Lemma 2.8]{touze2012troesch}.
\begin{lemma}\label{lem:paraprop}
    Let $V$ be a finite-dimensional graded vector space. Then the functor $F \mapsto F_V$ is exact and commutes with tensor products : for all strict polynomial functors $F,G$,
    \[
        (F \otimes G)_{V} = F_V \otimes G_V
    \]
    as graded functors.
\end{lemma}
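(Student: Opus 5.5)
We need to prove Lemma \ref{lem:paraprop}: exactness of $F\mapsto F_V$ and $(F\otimes G)_V = F_V\otimes G_V$ as graded functors. The approach is to work pointwise in $W$ and use only that the grading comes from a rational $\mathbb{G}_m$-action on $F(\bigoplus_i V^i\otimes W)$.

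\textbf{Exactness.} Exactness in $\polyfunctor{}$ is checked objectwise. Let $0\to F'\to F\to F''\to 0$ be exact and put $U=\bigoplus_i V^i\otimes W$. Evaluation at $U$ gives an exact sequence of vector spaces $0\to F'(U)\to F(U)\to F''(U)\to 0$. The $\mathbb{G}_m$-action on $F(U)$ is $F(\rho(t))$, where $\rho(t)$ acts on $U$ by $t^i$ on $V^i\otimes W$. Since the maps are natural transformations and $\rho(t)$ is a linear endomorphism of $U$, the maps commute with $F(\rho(t))$ and are therefore $\mathbb{G}_m$-equivariant. Taking weight spaces is exact on rational $\mathbb{G}_m$-modules, because a sequence of graded vector spaces with degree-preserving maps is exact if and only if it is exact in each degree. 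Hence $0\to F'_V{}^i(W)\to F_V^i(W)\to F''_V{}^i(W)\to 0$ is exact for every $i$ and every $W$. Naturality of $F_V$ in $W$ also needs checking: a map $f:W\to W'$ induces $\mathrm{id}\otimes f$ on $U$, which commutes with $\rho(t)$, so $F_V(f)$ preserves weights and each $F_V^i$ is a strict polynomial subfunctor.

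\textbf{Tensor products.} We have $(F\otimes G)_V(W)=F(U)\otimes G(U)=F_V(W)\otimes G_V(W)$ as ungraded functors, because the tensor product of strict polynomial functors is defined pointwise. The $\mathbb{G}_m$-action on the left is $(F\otimes G)(\rho(t))=F(\rho(t))\otimes G(\rho(t))$, which is the diagonal action. For the diagonal action on a tensor product of rational $\mathbb{G}_m$-modules, the weight-$k$ space is $\bigoplus_{i+j=k}F_V^i(W)\otimes G_V^j(W)$. This is exactly the degree-$k$ part of the graded tensor product $F_V\otimes G_V$, and the identification is natural in $W$.

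\textbf{Main obstacle.} The one subtle point is making sense of "$\mathbb{G}_m$ acts" on $F(U)$ when $F$ is a strict polynomial functor rather than a plain functor. The fix is to note that $F$ induces a polynomial map $\End(U)\to\End(F(U))$, so $t\mapsto F(\rho(t))$ is a polynomial homomorphism $\mathbb{G}_m\to GL(F(U))$ and hence a rational representation. Its weights are non-negative when all grading degrees $i$ are non-negative, and in general the representation still decomposes into weight spaces. Once this is in place, the remaining arguments are formal and reproduce \cite[Lemma 2.8]{touze2012troesch}.
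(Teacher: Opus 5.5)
Your proof is correct. The paper does not prove this lemma itself; it imports it from \cite[Lemma 2.8]{touze2012troesch}. Your argument is therefore a self-contained replacement for that citation rather than a competing route: evaluation at $U=\bigoplus_i V^i\otimes W$ is exact, passing to weight spaces of a rational $\mathbb{G}_m$-module is exact, and $\mathbb{G}_m$ acts diagonally on $F(U)\otimes G(U)$. It also makes visible that nothing about $q$ enters.

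One step needs more careful phrasing: the claim that ``the maps commute with $F(\rho(t))$''. The field $\field$ is arbitrary here; for $\ell=3$ it may be $\mathbb{F}_4$ or $\mathbb{F}_7$. Over a finite field, checking commutation only for $t\in\field^\times$ does not determine the weight decomposition; over $\mathbb{F}_2$ the condition is empty. You need the identity as an equality of polynomial maps in $t$, i.e.\ after base change to $\field[t,t^{-1}]$ with $\rho(t)$ the generic cocharacter. This holds because morphisms of strict polynomial functors, and the maps $F(\id_V\otimes f)$, respect the polynomial structure.

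Equivalently, in the $\Gamma^d$-description the paper uses, let $\pi_k\in\End_{\symgroup{d}}(U^{\otimes d})$ be the projection onto tensors of total degree $k$. The weight-$k$ summand of $F(U)$ is the image of $F(\pi_k)$. Natural transformations commute with $F(\pi_k)$, and $F_V(f)$ intertwines $F(\pi_k)$ with the corresponding projector $F(\pi'_k)$ on $F(V\otimes W')$, since $(\id_V\otimes f)^{\otimes d}$ preserves total degree. This holds over any field. Read this way, your exactness and tensor-product arguments go through unchanged.
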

The finite-dimensional vector space useful for our purposes is the following one.
\begin{definition}\label{def:E1}
    We set $E$ to be the graded vector space given by
    \[
        E^{k} = \left \{ \begin{array}{cl}
             \field & \mbox{if } k=0,2, \dots,2(\ell - 1), \\
             0 & \mbox{otherwise.} 
        \end{array} \right .
    \]
\end{definition}
The graded vector space $E$ is in fact isomorphic to $\Ext_{\quantumfunctor{}}^*(\twist{I},\twist{I})$, see \cite[Theorem 6.3]{théo2025extgroupcategoryquantumpolynomial}.

\begin{examples}
    One can show the following isomorphisms of graded functors.
    \begin{itemize}
        \item $\bigotimes^d_E \cong E^{\otimes d} \otimes \bigotimes^d$.
        \item $S^d_E \cong \bigoplus_{\alpha \in \Omega(d,l)} S^\alpha$ with $S^\alpha$ in degree $2\deg(\alpha)$.
    \end{itemize}
\end{examples}

\begin{proposition}\label{prop:firstiso}
    Let $F \in \polyfunctor{d}$ be an homogeneous strict polynomial functor. Then for any composition $\mu \in \Omega(d,n)$, we have a graded isomorphism, natural in $F$,
    \[
        \Ext^*_{\quantumfunctor{}}(\twist{F}, \twist{(S^\mu)}) \cong \Hom_{\polyfunctor{}}(F,(S^\mu)_{E}).
    \]
\end{proposition}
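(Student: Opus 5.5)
We compute the left-hand side with the injective coresolution of $\twist{(S^\mu)}$ obtained from quantum Troesch complexes, and show that the resulting cochain complex has zero differential. Write $\mu=(\mu_1,\dots,\mu_n)$ and consider the $\ell$-complex $B_{\ell\mu}=B_{\ell\mu_1}\otimes\cdots\otimes B_{\ell\mu_n}$. By definition \ref{def:qTroesch} and proposition \ref{prop:lkunneth}, it is an $\ell$-coresolution of $S^{\mu(1)_q}=\twist{(S^\mu)}$, with coresolution map $\varphi_\mu$ by remark \ref{rmk:choiceofthestart}.

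First, for a fixed $s\in\{1,\dots,\ell-1\}$, the ordinary complex $(B_{\ell\mu})_{[s]}$ is a genuine coresolution of $\twist{(S^\mu)}$. Indeed, its cohomology in position $2k$ is $H^{k\ell}_{[s]}$ and in position $2k+1$ is $H^{k\ell+s}_{[\ell-s]}$, and these all vanish except $H^0_{[s]}=\twist{(S^\mu)}$. Each term is a finite direct sum of functors $S^\beta_q$, hence injective by proposition \ref{prop:symmetric power property}. Therefore
\[
\Ext^*_{\quantumfunctor{}}(\twist{F},\twist{(S^\mu)})=H^*\bigl(\Hom_{\quantumfunctor{}}(\twist{F},(B_{\ell\mu})_{[s]})\bigr).
\]

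Next we compute the cochain groups using lemma \ref{lem:Hom(frob,Sq)}. The functor $B_{\ell\mu}$ is the direct sum of the $S^\beta_q$, where $\beta$ ranges over concatenations of compositions $\beta^i\in\Omega(\ell\mu_i,\ell)$, placed in degree $\sum_i\deg(\beta^i)$. Only summands with $\ell\mid\beta$ contribute to $\Hom(\twist{F},-)$, and these contribute $\Hom_{\polyfunctor{}}(F,S^{\beta/\ell})$. When $\beta=\ell\gamma$ the degree is $\ell\sum_i\deg(\gamma^i)$, which is a multiple of $\ell$. Hence $\Hom(\twist{F},B_{\ell\mu}^j)$ vanishes unless $\ell\mid j$.

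In $(B_{\ell\mu})_{[s]}$, the even positions $2k$ carry $B^{k\ell}$ and the odd positions carry $B^{k\ell+s}$ with $0<s<\ell$. So the Hom complex is zero in odd positions, all its differentials vanish, and its cohomology is concentrated in even degrees. In degree $2k$ we get
\[
\bigoplus_{\gamma}\Hom_{\polyfunctor{}}(F,S^{\gamma}),
\]
summed over $\gamma=(\gamma^1,\dots,\gamma^n)$ with $\gamma^i\in\Omega(\mu_i,\ell)$ and $\sum_i\deg(\gamma^i)=k$. This equals $\Hom_{\polyfunctor{}}(F,\bigoplus S^{\gamma})$. By the examples after definition \ref{def:E1} together with lemma \ref{lem:paraprop}, $(S^\mu)_E=\bigotimes_i(S^{\mu_i})_E$ is exactly $\bigoplus_\gamma S^\gamma$ with $S^\gamma$ in degree $2\sum_i\deg(\gamma^i)$. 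The gradings therefore match, giving the graded isomorphism.

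The main obstacle is naturality in $F$. The identification of Hom-spaces in lemma \ref{lem:Hom(frob,Sq)} is natural in $F$, but it goes through the maps $\varphi_\nu$ rather than an arbitrary isomorphism, and it is applied summand by summand. Since every differential of the Hom complex vanishes, the cohomology is the cochain group itself. Its identification with $\Hom(F,(S^\mu)_E)$ is then a direct sum of the natural isomorphisms of lemma \ref{lem:Hom(frob,Sq)}, and the identification $(S^\mu)_E\cong\bigoplus S^\gamma$ involves no choices depending on $F$. One must also check that the answer is independent of $s$ and of the choice of injective coresolution. This holds because all identifications of $\Ext$ via injective coresolutions are canonical and natural in the first variable.
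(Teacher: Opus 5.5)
Your proposal is correct and follows essentially the same route as the paper. The paper takes the ordinary injective coresolution $(B_{\ell\mu})_{[1]}$ and uses Lemma~\ref{lem:Hom(frob,Sq)} to kill $\Hom(\twist{F},-)$ on the odd terms, whose total degree is $\ell k+1$; it then matches the even terms with $(S^\mu)_E$ via the exponential description of $S^d_E$, exactly as you do with a general $s$. Your closing remark on independence of $s$ and of the coresolution is not needed, since the statement only asks for some isomorphism natural in $F$.
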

\begin{proof}
    Consider the injective coresolution $B_{\ell \alpha [1]}$ of $S^{\alpha(1)_q}$. It is given by
    \[
        B^{2k}_{\ell \alpha [1]} = \bigoplus_{\substack{\beta^i \in \Omega(\ell \alpha_i,\ell)\\\sum_i \deg(\beta^i) = \ell k}} \left (\bigotimes_{i=1}^n S^{\beta^i}_q \right ) \qquad \mbox{and} \qquad B^{2k+1}_{\ell \alpha [1]} = \bigoplus_{\substack{\beta^i \in \Omega(\ell \alpha_i,\ell)\\\sum_i \deg(\beta^i) = \ell k+1}} \left (\bigotimes_{i=1}^n S^{\beta^i}_q \right ).
    \]
    By lemma \ref{lem:Hom(frob,Sq)}, $\Hom(F^{(1)_q}, B^{2k+1}_{\ell \alpha [1]}) = 0$. Thus,
    \[
        \Ext^*_{\quantumfunctor{}}(F^{(1)_q}, S^{\alpha(1)_q}) \cong \left \{
        \begin{array}{cl}
            \Hom(F^{(1)_q},\ell(B_{\ell \alpha_1}, B_{\ell \alpha_2}, ... , B_{\ell \alpha_n})^*) & \mbox{if } k \mbox{ is even,} \\
            0 & \mbox{otherwise.} 
        \end{array} \right .
    \]
    where 
    \[
        \ell(B_{\ell \alpha_1}, B_{\ell \alpha_2}, ... , B_{\ell \alpha_n})^* = \bigoplus_{t_1,...,t_n} B_{\ell \alpha_1}^{\ell t_1} \otimes B_{\ell \alpha_2}^{\ell t_2} \otimes \cdots \otimes B_{\ell \alpha_n}^{\ell t_n}
    \]
    with $ B_{\ell \alpha_1}^{\ell t_1} \otimes B_{\ell \alpha_2}^{\ell t_2} \otimes \cdots \otimes B_{\ell \alpha_n}^{\ell t_n}$ in degree $2(t_1 + \cdots + t_n)$. By the exponential property (see \cite[Section 2.2]{touze2012troesch}), 
    \[
        S^d_{E} = \bigoplus_{\mu \in \Omega(d,\ell)} S^\mu
    \]
    with $S^\mu$ in degree $2\sum_i (i-1)\mu_i$. Hence, by lemma \ref{lem:Hom(frob,Sq)}, we have a graded isomorphism
    \begin{align*}
        \Hom_{\polyfunctor{}}(F,S^\alpha_{E}) 
        & \cong \bigoplus_{\beta^i \in \Omega(\alpha_i,\ell)} \Hom_{\polyfunctor{}}(F,S^{\beta^1} \otimes \cdots \otimes S^{\beta^n}) \\
        & \cong \bigoplus_{\beta^i \in \Omega(\alpha_i,\ell)} \Hom_{\quantumfunctor{}}(F^{(1)_q},S^{\ell \beta^1} \otimes \cdots \otimes S^{\ell \beta^n}) \\
        & \cong \Hom(F^{(1)_q},\ell(B_{\ell \alpha_1}, B_{\ell \alpha_2}, ... , B_{\ell \alpha_n})).
    \end{align*}
    whence the result.
\end{proof}
% We will construct a second isomorphism $\Ext^*_{\quantumfunctor{}}(\twist{F}, \twist{(S^\mu)}) \cong \Hom_{\polyfunctor{}}(F,(S^\mu)_{E})$, which will be natural in $F$ and in $S^\mu$ and which will satisfy another good property. The isomorphism of proposition \ref{prop:firstiso} is important to show the following corollary.
\begin{corollary}\label{cor:exactitude}
    For any composition $\mu$, the following functors are exact :
    \[
        F \mapsto \Ext^*_{\quantumfunctor{}}(\twist{F},\twist{(S^{\mu})}) \quad \mbox{and} \quad G \mapsto \Ext^*_{\quantumfunctor{}}(\twist{(\Gamma^{\mu})},\twist{G})\ .
    \]
\end{corollary}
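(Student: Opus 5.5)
The first functor is handled directly by Proposition \ref{prop:firstiso}. The second is reduced to the first by duality.

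\emph{The functor $F \mapsto \Ext^*_{\quantumfunctor{}}(\twist{F},\twist{(S^{\mu})})$.} By Proposition \ref{prop:firstiso}, this functor is isomorphic, naturally in $F$, to $F \mapsto \Hom_{\polyfunctor{}}(F,(S^\mu)_E)$. First decompose $F$ into homogeneous components; both sides are additive and vanish across different degrees, so it suffices to treat homogeneous $F$. Next, $(S^\mu)_E$ is a direct sum of functors $S^{\beta^1}\otimes\cdots\otimes S^{\beta^n}$, and each of these is injective in $\polyfunctor{}$ (Proposition \ref{prop:symmetric power property} with $q=1$). Hence $\Hom_{\polyfunctor{}}(-,(S^\mu)_E)$ is exact, and so the Ext functor is exact as well. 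The only point to check is that a short exact sequence $0\to F'\to F\to F''\to 0$ is carried to the induced sequence of Ext groups by the isomorphism of Proposition \ref{prop:firstiso}. This holds because the isomorphism is natural in $F$, so the relevant squares commute.

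\emph{The functor $G\mapsto \Ext^*_{\quantumfunctor{}}(\twist{(\Gamma^{\mu})},\twist{G})$.} Here I use the duality $F\mapsto F^\sharp$ on $\quantumfunctor{}$ and on $\polyfunctor{}$. It is an exact contravariant equivalence, and it satisfies $\Ext^*(A,B)\cong\Ext^*(B^\sharp,A^\sharp)$. I also need that duality commutes with the quantum Frobenius twist, $(\twist{G})^\sharp\cong\twist{(G^\sharp)}$; this should follow from the construction in \cite[Section 4]{théo2025extgroupcategoryquantumpolynomial}. Since $(\Gamma^\mu)^\sharp = S^\mu$, we obtain
\[
\Ext^*(\twist{(\Gamma^\mu)},\twist{G}) \cong \Ext^*(\twist{(G^\sharp)},\twist{(S^\mu)}),
\]
naturally in $G$. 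The right-hand side is the composite of the exact functor $G\mapsto G^\sharp$ with the first functor, which we have shown to be exact. Hence the second functor is exact.

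\emph{Expected obstacle.} The delicate step is the compatibility of the quantum Frobenius twist with duality, together with naturality of the Ext isomorphism under duality in the quantum (braided) setting. If this compatibility is not directly available, I would argue dually from the start: construct a projective $\ell$-resolution of $\twist{(\Gamma^\mu)}$ by dualizing the quantum Troesch complexes, and then repeat the vanishing argument of Proposition \ref{prop:firstiso}, using a $\Gamma$-version of Lemma \ref{lem:Hom(frob,Sq)}. This expresses the Ext group as $\Hom_{\polyfunctor{}}((\Gamma^\mu)_{E},G)$, which is exact in $G$ because the $\Gamma$-type functors are projective.
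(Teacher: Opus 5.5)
Your proposal is correct and follows the paper's proof. The first functor is exact because of the natural isomorphism of Proposition \ref{prop:firstiso} and the injectivity of $(S^\mu)_E$, and the second is reduced to the first by duality; the compatibility $(\twist{G})^\sharp\cong\twist{(G^\sharp)}$ that you flag is what the paper's phrase ``using duality'' tacitly relies on, so your write-up is a more detailed version of the same argument.
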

\begin{proof}
    Using duality, it suffices to prove the exactness of the first functor. For that, we just use the isomorphism (natural in $F$) of proposition \ref{prop:firstiso} and the fact that $(S^\mu)_{E}$ is injective.
\end{proof}
In fact, we need the quantum Troesch complexes solely to show corollary \ref{cor:exactitude}. The remainder of this section is devoted to constructing another isomorphism
\[
    \Ext_{\quantumfunctor{}}^*(\twist{F}, \twist{(S^\mu)}) \simeq \Hom_{\quantumfunctor{}}(F,(S^\mu)_E) \ ,
\]
and the twisting spectral sequence. The new isomorphism will be compatible with the cup product, whose definition is recalled just below.
\begin{definition}
    Let $F_1,F_2,G_1,G_2$ be quantum polynomial functors, and let $P,Q$ be projective resolutions of $F_1$ and $F_2$ respectively. Then $P \otimes Q$ is a projective resolution of $F_1 \otimes F_2$, and the natural map
    \[
        \Hom_{\quantumfunctor{}}(P,G_1) \otimes \Hom_{\quantumfunctor{}}(Q,G_2) \to \Hom(P \otimes Q, G_1 \otimes G_2)
    \]
    is a complex morphism. Thus, it induces a map
    \[
        \Ext_{\quantumfunctor{}}(F_1,G_1) \otimes \Ext_{\quantumfunctor{}}(F_2,G_2) \to \Ext_{\quantumfunctor{}}(F_1 \otimes F_2, G_1 \otimes G_2), \quad c_1 \otimes c_2 \mapsto c_1 \cup c_2
    \]
    called the cup product.
\end{definition}
We first construct the new isomorphism for $F$ and $S^\mu$ tensor power functors.
\begin{theorem}\label{theo:tenseurcase}
    There exist a family of graded isomorphisms
    \[
        \theta(\tensorfunctor{d},\tensorfunctor{d}) : \Ext^*_{\quantumfunctor{}}(\tensorfunctor{d(1)_q},\tensorfunctor{d(1)_q}) \to \Hom_{\polyfunctor{}}(\tensorfunctor{d}, (\tensorfunctor{d})_{E})
    \]
    for $d \geq 0$, which are natural in $\tensorfunctor{d},\tensorfunctor{d}$, and compatible with products :
    \[
        \theta(\tensorfunctor{d_1+d_2},\tensorfunctor{d_1+d_2})(c_1 \cup c_2) = \theta(\tensorfunctor{d_1},\tensorfunctor{d_1})(c_1) \otimes \theta(\tensorfunctor{d_2},\tensorfunctor{d_2})(c_2).
    \]
\end{theorem}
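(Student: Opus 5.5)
We will follow the strategy of \cite[Section 7]{touze2012troesch} and build $\theta$ from the degree $d=1$ case using cup products. The first step is to fix, for the identity functor, an isomorphism $\theta_1 : \Ext^*_{\quantumfunctor{}}(\twist{I},\twist{I}) \to \Hom_{\polyfunctor{}}(I,I_E) = E$. By proposition \ref{prop:firstiso} with $F = I$ and $\mu=(1)$, the source is $E$, concentrated in even degrees $0,2,\dots,2(\ell-1)$ with one-dimensional pieces. For $0\le k<\ell$ we pick a generator $e_k \in \Ext^{2k}(\twist{I},\twist{I})$, and we let $\theta_1$ send $e_k$ to the basis vector of $E^{2k}$.

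Next we define $\theta(\tensorfunctor{d},\tensorfunctor{d})$ using the Künneth-type cup product. For $d$ copies we have the map
\[
    \Ext^*(\twist{I},\twist{I})^{\otimes d} \xrightarrow{\cup} \Ext^*(\tensorfunctor{d(1)_q},\tensorfunctor{d(1)_q}),
\]
using $(\tensorfunctor{d})^{(1)_q} \cong \tensorfunctor{d(1)_q}$ from proposition \ref{prop:twist}. On the other side, $\Hom_{\polyfunctor{}}(\tensorfunctor{d},(\tensorfunctor{d})_E) \cong E^{\otimes d}\otimes \field\symgroup{d}$, because $\tensorfunctor{d}_E \cong E^{\otimes d}\otimes\tensorfunctor{d}$. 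We expect a similar decomposition
\[
    \Ext^*(\tensorfunctor{d(1)_q},\tensorfunctor{d(1)_q}) \cong \Ext^*(\twist{I},\twist{I})^{\otimes d}\otimes \field\symgroup{d},
\]
where a permutation $\sigma$ acts through the twist of the classical symmetry $\sigma\in\End(\tensorfunctor{d})$. This is a fully faithful image, so it is not a braiding. Concretely, we send $(c_1\cup\cdots\cup c_d)\circ \twist{\sigma}$ to $(\theta_1(c_1)\otimes\cdots\otimes\theta_1(c_d))\circ\sigma$.

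To show this map is well defined and bijective, we first compare dimensions. By proposition \ref{prop:firstiso}, together with the decomposition of $\tensorfunctor{d}$ as a summand of sums of $S^\mu$ (or directly via $\Hom(\tensorfunctor{d},\tensorfunctor{d}_E)$), both sides have graded dimension $(\dim E)^{\otimes d}\cdot d!$. It then suffices to prove surjectivity, or injectivity, of the cup-product map
\[
    \bigoplus_\sigma \Ext^*(\twist{I},\twist{I})^{\otimes d}\to \Ext^*(\tensorfunctor{d(1)_q},\tensorfunctor{d(1)_q}).
\]
This is the main obstacle. We would attack it by computing $\Ext^*(\tensorfunctor{d(1)_q},\twist{(S^{(1,\dots,1)})})$ with the tensor product $B_{(\ell,\dots,\ell)}$ of quantum Troesch complexes. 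Since $S^{(1,\dots,1)}=\tensorfunctor{d}$, proposition \ref{prop:lkunneth} shows this tensor product $\ell$-complex is exactly the coresolution used for the cup product of the $d$ copies of $B_\ell$. A class in $\Ext^{2k}$ is then represented by a map $\tensorfunctor{d(1)_q}\to B^{\ell k}_{(\ell,\dots,\ell)}$, whose components factor as tensor products of the single-variable classes $e_{k_i}$. This identifies the image of the cup product with everything, after composing with permutations on the source. We also have to check that the sign and $q$-power twists in definition \ref{def:ltensor} do not spoil the identification. They only rescale by nonzero scalars on each summand, so we absorb them into the choice of the basis $e_k$ if necessary.

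The remaining properties then follow formally. Naturality in both variables holds because morphisms $\tensorfunctor{d}\to\tensorfunctor{d}$ are spanned by permutations, and $\theta$ intertwines composition with $\twist{\sigma}$ and with $\sigma$ on the two sides. This uses that the cup product commutes with twisted symmetries, since those come from the symmetric monoidal structure of $\polyfunctor{}$ transported by the monoidal twist. Compatibility with products holds by construction: both cup product and tensor product are concatenation of the $d_1$ and $d_2$ factors and of the permutations $\sigma_1\times\sigma_2$. Associativity of the cup product ensures that the definition is consistent.
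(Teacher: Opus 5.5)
Your target map is the paper's own, $(v_1\cup\cdots\cup v_d)\cdot\sigma\mapsto v_1\otimes\cdots\otimes v_d\otimes\sigma$. The paper, however, does not derive the structure of $\Ext^*_{\quantumfunctor{}}(\tensorfunctor{d(1)_q},\tensorfunctor{d(1)_q})$. It imports from Theorem 6.6 and Corollary 6.7 of the author's earlier paper that, as a $\field\symgroup{d}$-bimodule, this group is $E^{\otimes d}\otimes\field\symgroup{d}$, spanned by the classes $(v_1\cup\cdots\cup v_d)\cdot\sigma$, with the left action permuting the cup factors. $\theta$ is then the composite of two bimodule isomorphisms.

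\textbf{Main gap: cup products versus the $\ell$-tensor product.} Your attempt to reprove this from Troesch complexes breaks at the step you flag as the main obstacle. Proposition \ref{prop:lkunneth} only says that $B_\ell^{\otimes d}$ is an $\ell$-coresolution; it says nothing about cup products. Cup products are computed by the ordinary tensor product $((B_\ell)_{[1]})^{\otimes d}$ of ordinary coresolutions, with Koszul signs. Your Ext computation instead uses $(B_\ell^{\otimes d})_{[1]}$, and these are different complexes. For $d=2$, in degree $2$ the first is $B_\ell^\ell\otimes B_\ell^0\oplus B_\ell^1\otimes B_\ell^1\oplus B_\ell^0\otimes B_\ell^\ell$, while the second is $\bigoplus_{i+j=\ell}B_\ell^i\otimes B_\ell^j$. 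Moreover the differential $\delta^{\ell-1}$ of the second mixes all terms $\delta^m\otimes\delta^{\ell-1-m}$ with nonzero $q^2$-binomial coefficients.

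So the discrepancy is not a rescaling of summands: the factors $q^{2i}$ are $1$ on the relevant summands, and Definition \ref{def:ltensor} has no signs. Your claim that $\varphi_1^{\otimes d}$ into $B_\ell^{\ell t_1}\otimes\cdots\otimes B_\ell^{\ell t_d}$ represents $e_{t_1}\cup\cdots\cup e_{t_d}$ is therefore unproved.

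\textbf{How to repair it.} What is missing is a comparison chain map. For $\ell$-complexes $C,D$ there is an explicit chain map $C_{[1]}\otimes D_{[1]}\to(C\otimes D)_{[1]}$ over the identity. It is the inclusion on every summand $C^i\otimes D^j$ except those with $i\equiv j\equiv 1\pmod{\ell}$. On those it is a combination of the maps $\delta^m\otimes\delta^{\ell-2-m}$, with coefficients read off from the $q^2$-binomial expansion of $\delta^{\ell-1}_{C\otimes D}$. Iterate this map, and use that $\Hom(\tensorfunctor{d(1)_q},-)$ kills every odd-degree term and every summand not of the form $B_\ell^{\ell t_1}\otimes\cdots\otimes B_\ell^{\ell t_d}$ (Lemma \ref{lem:Hom(frob,Sq)}). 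This gives your identification, and hence surjectivity.

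\textbf{Naturality in the second variable.} This is not justified by what you wrote. Precomposition with $\twist{\sigma}$ is built into your definition. Postcomposition requires that $\twist{\tau}\circ(c_1\cup\cdots\cup c_d)$ equal the cup product of the permuted classes composed with $\twist{\tau}$. Ext classes are represented by maps into coresolutions by non-twisted injectives $S^\alpha_q$. So the fact that the twist is monoidal and $\polyfunctor{}$ is symmetric does not give this. What is needed is, for example:
\begin{itemize}
\item that $\twist{\tau}$ is a nonzero multiple of the braiding of $\quantumfunctor{}$ on $\twist{I}\otimes\twist{I}$ (the scalar then cancels);
\item the compatibility of cup products with the braiding (no Koszul signs appear, since all classes sit in even degrees).
\end{itemize}
In the paper this is exactly the content of the left action $\sigma_1\cdot(v_1\otimes\cdots\otimes v_d\otimes\omega)=v_{\sigma_1(1)}\otimes\cdots\otimes v_{\sigma_1(d)}\otimes\sigma_1\omega$ in the imported bimodule isomorphism. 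Your dimension count via Proposition \ref{prop:firstiso} is fine, since Troesch complexes are assumed in this section. Note, though, that it makes this theorem depend on them, whereas the paper's argument for it does not.
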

\begin{proof}
    According to \cite[Lemma 2.1]{touze2012troesch}, $\Hom_{\polyfunctor{}}(\tensorfunctor{d},\tensorfunctor{d}) \cong \field \symgroup{d}$. Thus, the naturality means that $\theta(\tensorfunctor{d},\tensorfunctor{d})$ is a morphism of $\field \symgroup{d} - \field \symgroup{d}$-bimodule. 
    
    As a $\field \symgroup{d} - \field \symgroup{d}$-bimodule, $\Ext^*_{\quantumfunctor{}}(\tensorfunctor{d(1)_q},\tensorfunctor{d(1)_q})$ is isomorphic to $E^{\otimes d} \otimes \field \symgroup{d}$ by \cite[Theorem 6.6 and corollary 6.7]{théo2025extgroupcategoryquantumpolynomial}, where the action is defined by
    \[
        \sigma_1(v_1 \otimes \cdots \otimes v_d \otimes \omega)\sigma_2 = v_{\sigma_1(1)} \otimes \cdots \otimes v_{\sigma_1(d)} \otimes (\sigma_1 \omega \sigma_2).
    \]
    Meanwhile, $(\tensorfunctor{d})_E \cong E^{\otimes d} \otimes \tensorfunctor{d}$, and hence,
    \[
        \Hom_{\polyfunctor{}}(\tensorfunctor{d},(\tensorfunctor{d})_E) \cong E^{\otimes d} \otimes \Hom_{\polyfunctor{}}(\tensorfunctor{d},\tensorfunctor{d}) \cong E^{\otimes d} \otimes \field \symgroup{d}
    \]
    and this isomorphism is also an isomorphism of $\field \symgroup{d} - \field \symgroup{d}$-bimodule. The composition of the isomorphisms gives the following isomorphisms of $\field \symgroup{d} - \field \symgroup{d}$-bimodule :
    \[
        \begin{array}{rcl}
             \theta(\tensorfunctor{d},\tensorfunctor{d}) : \Ext^*_{\quantumfunctor{}}(\tensorfunctor{d(1)_q},\tensorfunctor{d(1)_q}) & \to & \Hom_{\polyfunctor{}}(\tensorfunctor{d}, (\tensorfunctor{d})_{E})  \\
            (v_1 \cup \cdots \cup v_d) \cdot \sigma & \mapsto & (x \mapsto v_1 \otimes \cdots \otimes v_d \otimes \sigma(x)).
        \end{array}
    \]
    The compatibility with the product can then be checked directly.
\end{proof}

We can now do the general case.
\begin{theorem}\label{theo:naturaliso}
    Let $F$ be a strict polynomial functor, and $\mu$ a composition. We have a graded isomorphism
    \[
        \theta(F,S^\mu) : \Ext^*_{\quantumfunctor{}}(\twist{F},\twist{(S^\mu)}) \to \Hom_{\polyfunctor{}}(F,(S^\mu_{E})^*)
    \]
    natural in $F$ and in $S^\mu$. Moreover, if $F_1,F_2$ are strict polynomial functors and $\mu^1,\mu^2$ are compositions, then, for any
    \[
        c_1 \in \Ext^*_{\quantumfunctor{}}(\twist{F_1},\twist{(S^{\mu^1})}) \quad \mbox{and} \quad c_2 \in \Ext^*_{\quantumfunctor{}}(\twist{F_2},\twist{(S^{\mu^2})}),
    \]
    we have
    \[
        \theta(F_1 \otimes F_2,S^{\mu^1} \otimes S^{\mu^2})(c_1 \cup c_2) = \theta(F_1,S^{\mu^1})(c_1) \otimes \theta(F_2,S^{\mu^2})(c_2).
    \]
\end{theorem}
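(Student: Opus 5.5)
We extend the isomorphisms $\theta(\tensorfunctor{d},\tensorfunctor{d})$ of theorem \ref{theo:tenseurcase} to arbitrary $F$ and to the $S^\mu$ in the standard way of \cite[Section 7]{touze2012troesch}. We first treat the case $F=\tensorfunctor{d}$ and then pass to general $F$ by a presentation argument, using exactness (corollary \ref{cor:exactitude}) to control both sides.

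\emph{Step 1: $F=\tensorfunctor{d}$, target $S^\mu$.} Write $S^\mu$ as a cokernel. Since $S^{\mu_i}=(\tensorfunctor{\mu_i})_{\symgroup{\mu_i}}$, we have $S^\mu = (\tensorfunctor{d})_{\symgroup{\mu}}$, the coinvariants under the Young subgroup. This gives an exact presentation
\[
    \bigoplus_{\sigma \in \symgroup{\mu}} \tensorfunctor{d} \xrightarrow{\ (\sigma - 1)_\sigma\ } \tensorfunctor{d} \to S^\mu \to 0 .
\]
The twist is exact and monoidal (proposition \ref{prop:twist}), so this twists to an exact sequence. We then want $\Ext^*(\tensorfunctor{d(1)_q},-)$ to send it to a right exact sequence. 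Here we use duality: $\tensorfunctor{d}$ is self-dual, so by corollary \ref{cor:exactitude} (second functor, with $\Gamma^{(1,\dots,1)}=\tensorfunctor{d}$) the functor $G\mapsto \Ext^*(\tensorfunctor{d(1)_q},\twist{G})$ is exact. The functor $\Hom_{\polyfunctor{}}(\tensorfunctor{d},(-)_E)$ is also exact, since $\tensorfunctor{d}$ is projective and $(-)_E$ is exact by lemma \ref{lem:paraprop}. Naturality of $\theta(\tensorfunctor{d},\tensorfunctor{d})$ in the second variable, which is $\symgroup{d}$-equivariance, makes the square involving $\sigma-1$ commute. Hence $\theta(\tensorfunctor{d},\tensorfunctor{d})$ induces an isomorphism $\theta(\tensorfunctor{d},S^\mu)$ on cokernels. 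We then check that $\theta(\tensorfunctor{d},S^\mu)$ is independent of choices and natural with respect to maps $S^\mu\to S^\nu$. Every such map lifts to a map $\tensorfunctor{d}\to\tensorfunctor{d}$ (a sum of permutations), because $\Hom(\tensorfunctor{d},S^\nu)$ is a quotient of $\field\symgroup{d}$. The compatibility then follows from naturality in the tensor case.

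\emph{Step 2: general $F$.} Both sides are now contravariant and exact in $F$: the left side by corollary \ref{cor:exactitude}, the right side because $(S^\mu)_E$ is injective. Since $F\in\polyfunctor{d}$ is a quotient of sums of $\Gamma^\lambda$, and these are summands of sums of $\tensorfunctor{d}$ twisted appropriately, we may simply take a presentation
\[
    P_1\to P_0\to F\to 0
\]
with $P_i$ finite direct sums of $\tensorfunctor{d}$. This works because $\tensorfunctor{d}$ is a projective generator of $\polyfunctor{d}$ when enough copies are allowed. If that fails in small characteristic, we use $\Gamma^\lambda$ as direct summands of $\tensorfunctor{d}$-coinvariants and repeat Step 1 dually. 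Left exactness of both sides converts the presentation into kernels, and we define $\theta(F,S^\mu)$ as the induced isomorphism on kernels. Independence of the presentation and naturality in $F$ follow from the comparison theorem: maps of functors lift to maps of presentations, and two lifts differ by a homotopy. At this point only naturality of $\theta$ on the $P_i$ is needed.

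\emph{Step 3: products.} Cup products are natural in all four variables, and tensor products of presentations are presentations; this is where biexactness of $\otimes$ and exactness of the twist are used. The product formula therefore reduces, via the injections into $\theta$ on the $P_i$ and the surjections from $\tensorfunctor{d}$, to the tensor-power case of theorem \ref{theo:tenseurcase}. The main obstacle I expect is Step 1 and the well-definedness in Step 2. Step 1 depends on the exactness of $\Ext^*(\tensorfunctor{d(1)_q},\twist{-})$; without this we would only have a long exact sequence rather than cokernels. Step 2 requires care so that naturality in $S^\mu$ (arbitrary maps $S^\mu\to S^\nu$, not just permutations) is preserved. For this I would first verify that every map $S^\mu\to S^\nu$ is induced from an element of $\field\symgroup{d}$ acting on $\tensorfunctor{d}$.
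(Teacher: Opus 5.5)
Your overall route is the paper's. The paper only cites Touzé's proof of \cite[Theorem 5.6]{touze2012troesch}, which propagates $\theta(\tensorfunctor{d},\tensorfunctor{d})$ using nothing but the two exactness statements of corollary \ref{cor:exactitude}. Your Step 1 is correct: the coinvariant description $S^\mu=(\tensorfunctor{d})_{\symgroup{\mu}}$, the exactness of $G\mapsto\Ext^*_{\quantumfunctor{}}(\tensorfunctor{d(1)_q},\twist{G})$, and the lifting of maps $S^\mu\to S^\nu$ to $\field\symgroup{d}$ all work.

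The gap is in Step 2. You claim that $\tensorfunctor{d}$ is a projective generator of $\polyfunctor{d}$ and that the $\Gamma^\lambda$ are summands of sums of copies of $\tensorfunctor{d}$. Both are false whenever $0<p=\mathrm{char}\,\field\leq d$. For $d=p$, the classical Frobenius twist $I^{(1)}$ only has the weights $p\epsilon_i$. So $\Hom_{\polyfunctor{}}(\tensorfunctor{p},I^{(1)})$, which is the $(1,\dots,1)$-weight space of $I^{(1)}$, vanishes, and $I^{(1)}$ is not a quotient of any sum of copies of $\tensorfunctor{p}$. Equivalently, its projective cover $\Gamma^p$ is not such a summand. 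A characteristic-free statement is the whole point here, so presentations by sums of $\tensorfunctor{d}$ are not available. Your fallback is also misstated: $\Gamma^\lambda$ is not a summand of a coinvariant functor (the $\symgroup{\lambda}$-coinvariants of $\tensorfunctor{d}$ are $S^\lambda$); it is the invariant subfunctor $(\tensorfunctor{d})^{\symgroup{\lambda}}$.

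What is missing is the construction of $\theta(\Gamma^\lambda,S^\mu)$ before any presentation is used.

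\begin{itemize}
\item Write $0\to\Gamma^\lambda\to\tensorfunctor{d}\xrightarrow{(\sigma-1)_\sigma}\bigoplus_{\sigma\in\symgroup{\lambda}}\tensorfunctor{d}$. The functors $F\mapsto\Ext^*_{\quantumfunctor{}}(\twist{F},\twist{(S^\mu)})$ (first half of corollary \ref{cor:exactitude}) and $F\mapsto\Hom_{\polyfunctor{}}(F,S^\mu_E)$ are contravariant and exact. They therefore turn this left exact sequence into a right exact one.
\item Consequently both sides at $\Gamma^\lambda$ are the $\symgroup{\lambda}$-coinvariants, for the action by precomposition, of the corresponding sides at $\tensorfunctor{d}$. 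The $\symgroup{d}$-equivariance of $\theta(\tensorfunctor{d},S^\mu)$ in the first variable (inherited from theorem \ref{theo:tenseurcase} through your Step 1 quotient) then induces $\theta(\Gamma^\lambda,S^\mu)$.
\item Naturality for arbitrary maps $\Gamma^\lambda\to\Gamma^\nu$ is dual to your argument for $S^\mu\to S^\nu$. Since $\tensorfunctor{d}$ is injective, such a map, viewed in $\Hom_{\polyfunctor{}}(\Gamma^\lambda,\tensorfunctor{d})$, extends to an element of $\field\symgroup{d}$ acting on $\tensorfunctor{d}$.
\end{itemize}

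Only then run your presentation and comparison argument, with $P_0,P_1$ finite sums of $\Gamma^\lambda$, which are genuine projective generators. The product formula likewise reduces to theorem \ref{theo:tenseurcase}. It does so through the surjections $\Ext^*_{\quantumfunctor{}}(\twist{(\tensorfunctor{d})},\twist{(S^\mu)})\twoheadrightarrow\Ext^*_{\quantumfunctor{}}(\twist{(\Gamma^\lambda)},\twist{(S^\mu)})$ and the injections into the $P_0$-terms.
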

\begin{proof}
    The proof is similar to the proof of \cite[Theorem 5.6]{touze2012troesch}. This only uses the case $F=G=\bigotimes^d$, which is covered by theorem \ref{theo:tenseurcase}, and the exactness of the functors of corollary \ref{cor:exactitude}.
\end{proof}

Finally, we present the twisting spectral sequences.
\begin{theorem}\label{theo:spectralsequence}
    Let $F,G$ be strict polynomial functors. Then there is a first quadrant spectral sequence, natural in $F$ and $G$,
    \[
        E_2^{s,t}(F,G) = \Ext^s_{\polyfunctor{}}(F,G_{E}^t) \implies \Ext_{\quantumfunctor{}}^{s+t}(\twist{F},\twist{G}).
    \]
    Moreover, if $F_1,F_2,G_1,G_2$ are strict polynomial functors, there is a pairing of spectral sequences
    \[
        E^{*,*}(F_1,G_1) \otimes E^{*,*}(F_2,G_2) \to E^{*,*}(F_1 \otimes F_2, G_1 \otimes G_2) 
    \]
    (see \cite[3.9]{benson1991representations} for a definition) which coincides with the cup product on the second page and on the abutment.
\end{theorem}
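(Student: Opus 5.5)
We would follow the strategy of \cite[Theorem 7.1]{touze2012troesch}: build the spectral sequence from a Cartan--Eilenberg type double complex that combines an injective coresolution of $G$ in $\polyfunctor{}$ with injective coresolutions of the twisted injectives in $\quantumfunctor{}$.

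First, choose an injective coresolution $G \to J^\bullet$ in $\polyfunctor{}$ in which each $J^s$ is a finite direct sum of functors $S^\mu$. This is possible by the cogeneration statement in proposition \ref{prop:symmetric power property} with $q=1$. Since the quantum Frobenius twist is exact (definition \ref{def:twist}), $\twist{G} \to \twist{(J^\bullet)}$ is a coresolution in $\quantumfunctor{}$, though not an injective one. Take a Cartan--Eilenberg injective coresolution $\twist{(J^\bullet)} \to I^{\bullet,\bullet}$ in $\quantumfunctor{}$, and consider the double complex $K^{s,t} = \Hom_{\quantumfunctor{}}(\twist{F}, I^{s,t})$. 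Its total complex computes $\Ext^*_{\quantumfunctor{}}(\twist{F},\twist{G})$, because $\mathrm{Tot}(I)$ is an injective coresolution of $\twist{G}$. We filter by the $s$-degree. Taking vertical cohomology first gives the first page
\[
E_1^{s,t} = \Ext^t_{\quantumfunctor{}}(\twist{F},\twist{(J^s)}),
\]
which by theorem \ref{theo:naturaliso} is isomorphic to $\Hom_{\polyfunctor{}}(F,(J^s_E)^t)$. Naturality of $\theta$ in $S^\mu$ identifies the $d_1$-differential with the map induced by $J^s \to J^{s+1}$. By lemma \ref{lem:paraprop}, $J_E$ is a coresolution of $G_E$, and each $J^s_E$ is injective since $(S^\mu)_E$ is a sum of $S^\nu$'s. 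Hence
\[
E_2^{s,t} = H^s\bigl(\Hom_{\polyfunctor{}}(F,(J^\bullet_E)^t)\bigr) = \Ext^s_{\polyfunctor{}}(F,G_E^t).
\]
The sequence lives in the first quadrant. Naturality in $F$ is clear. Naturality in $G$ follows from the functoriality of injective coresolutions and of Cartan--Eilenberg coresolutions up to homotopy, so from $E_2$ on the sequence does not depend on the choices.

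For the pairing, choose the coresolutions $J_1$, $J_2$ of $G_1$, $G_2$. Then $J_1 \otimes J_2$ is again an injective coresolution of $G_1\otimes G_2$ by sums of $S^\mu$'s, since $S^\mu \otimes S^\nu = S^{(\mu,\nu)}$ and $\otimes$ is exact. Likewise $I_1 \otimes I_2$ is a Cartan--Eilenberg coresolution of $\twist{(J_1\otimes J_2)}$, using proposition \ref{prop:twist} and the fact that tensor products of injectives in $\quantumfunctor{}$ remain injective (sums of $S^\alpha_q$). Replacing $\twist{F_i}$ by projective resolutions, or dually, the external product $K_1 \otimes K_2 \to K$ is a morphism of filtered complexes. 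It therefore induces a pairing of spectral sequences in the sense of \cite[3.9]{benson1991representations}. On the abutment this is the cup product by definition. On $E_1$ it is the cup product of $\Ext^*(\twist{F_i},\twist{(J_i^s)})$, which $\theta$ carries to the tensor product of homs by the multiplicativity in theorem \ref{theo:naturaliso}. Passing to $E_2$ gives the cup product on $\Ext_{\polyfunctor{}}$.

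I expect the main obstacle to be bookkeeping: checking that the tensor product of Cartan--Eilenberg coresolutions is again one, or replacing it by a comparison map, and that the signs and grading of $E$ are compatible with $\theta$. I would handle this by working with projective resolutions of the $\twist{F_i}$ on the first variable, so that only the second variable needs injective models.
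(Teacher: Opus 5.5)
Your proposal is correct and follows the paper's route. The paper's proof just says to run the proof of \cite[Theorem 7.1]{touze2012troesch}: the spectral sequence of $\Hom(\twist{F},-)$ applied to the twisted coresolution $\twist{(J^\bullet)}$ of $\twist{G}$ by sums of $S^\mu$'s, with theorem \ref{theo:naturaliso} supplying the natural, multiplicative identification of $E_1$ with $\Hom_{\polyfunctor{}}(F,(J^\bullet_E)^t)$ and of the $E_1$-pairing with tensor products of homs. The Cartan--Eilenberg bookkeeping you worry about is harmless, since you only need each column of $I_1\otimes I_2$ to be an injective coresolution of the corresponding column of $\twist{(J_1\otimes J_2)}$; this holds by K\"unneth and because tensor products of injectives in $\quantumfunctor{}$ are injective.
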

\begin{proof}
    It can be proved exactly as in \cite[Theorem 7.1]{touze2012troesch} using theorem \ref{theo:naturaliso}.
\end{proof}
\begin{corollary}\label{cor : collapse imply conjecture}
    Let $F,G$ be strict polynomial functors. Suppose that the twisting spectral sequence collapses at the second page. Then
    \[
        \Ext^k_{\quantumfunctor{}}(F^{(1)_q},G^{(1)_q}) \simeq \bigoplus_{i+j=k} \Ext^i_{\polyfunctor{}}(F,G^j_{E}) \ .
    \]
    In particular, this holds if $\Ext^s_{\polyfunctor{}}(F,G_{E}^t) = 0$ when $s$ is odd. 
\end{corollary}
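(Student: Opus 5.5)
The plan is to read the isomorphism off the twisting spectral sequence of theorem \ref{theo:spectralsequence}, in two steps: collapse gives the isomorphism, and a parity argument gives collapse under the stated vanishing hypothesis.

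\textbf{Collapse gives the isomorphism.} Suppose the spectral sequence $E_r^{s,t}(F,G)$ collapses at the second page. Then $E_\infty^{s,t} = E_2^{s,t} = \Ext^s_{\polyfunctor{}}(F,G^t_E)$. The abutment $\Ext^k_{\quantumfunctor{}}(\twist{F},\twist{G})$ carries a finite filtration, since the spectral sequence is first quadrant. Its associated graded pieces are the $E_\infty^{s,t}$ with $s+t=k$. We work over the field $\field$, and every $\Ext$-group involved is a $\field$-vector space. A finitely filtered vector space is isomorphic to its associated graded, because each short exact sequence of vector spaces splits. This yields the isomorphism
\[
\Ext^k_{\quantumfunctor{}}(\twist{F},\twist{G}) \simeq \bigoplus_{i+j=k} \Ext^i_{\polyfunctor{}}(F,G^j_E).
\]
The isomorphism is not canonical, but the statement only asks for some isomorphism.

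\textbf{The odd-vanishing hypothesis forces collapse.} Assume $E_2^{s,t}=0$ whenever $s$ is odd. The differential $d_r$ has bidegree $(r,1-r)$. Two cases arise.
\begin{itemize}
\item If $r$ is odd, $d_r$ changes the parity of $s$. Either its source or its target has odd $s$, so one of them vanishes and $d_r = 0$.
\item If $r$ is even, no argument is needed if the grading on the $t$-side also has constrained parity. Since $E$ is concentrated in even degrees, $G^t_E = 0$ for $t$ odd, so $E_2^{s,t}=0$ unless both $s$ and $t$ are even. An even $r \geq 2$ gives $1-r$ odd, so $d_r$ changes the parity of $t$, and again $d_r=0$.
\end{itemize}
Hence all differentials $d_r$ with $r\ge 2$ vanish, and the sequence degenerates at $E_2$.

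The only slightly delicate point is the even-$r$ case. It needs the observation that $G_E$ is concentrated in even degrees, which follows from definition \ref{def:E1} and the weight description of definition \ref{def:parametre}: the weights of $\mathbb{G}_m$ on $\bigoplus_i E^i\otimes W$ are all even. Everything else is formal.
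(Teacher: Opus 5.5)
Your proposal is correct and matches the reasoning the paper intends. The corollary is stated without a written proof, but the intended argument is exactly yours: degeneration at $E_2$ plus splitting of the finite filtration over $\field$, together with the lacunary observation that $G^t_E=0$ for odd $t$ because $E$ is concentrated in even degrees. Your case split on the parity of $r$ is fine; equivalently, $E_2^{s,t}$ is then concentrated in even total degree while every $d_r$ raises total degree by one.
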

For example, the hypothesis holds for the following choice of $F$ and $G$ :
% If, for some strict polynomial functor $F,G$, the twisting spectral sequence collapses at the second page, then this proves for the conjecture \ref{conj} for these particular $F$ and $G$. It can happens for lacunary reason, for example if $\Ext^s_{\polyfunctor{}}(F,G_{E}^t) = 0$ when $s$ is odd (since $G^t_E = 0$ when $t$ is odd), or when $s \neq 0$. This happens in the following cases :
\begin{itemize}
    \item when $F$ is projective, or dually if $G$ is injective;
    \item when $F$ is a Schur functor and $G$ a Weyl functor;
    \item when $F$ is obtained from a projective by the classical Frobenius twist and $G$ is obtained from any functor by the classical Frobenius twist, or dually with $G$ obtained from an injective by the classical Frobenius twist, and $F$ from any functor.
\end{itemize}

\section{Construction of the Troesch complex in the case $\ell = 3$}

\subsection{Exponential functors}

To construct and compute the homology of our complexes, we will define more structure on the underlying graded functor of the Troesch complexes. We will define a product, which will have the special property of making the family $(B_d)_{d \geq 0}$ an exponential functor.

\begin{definition}\label{def : algebra in the category of quantum polynomial functors}
    A $\quantumfunctor{}$-algebra is a family of quantum polynomial functors $F = (F_d)_{d \geq 0}$, with $F_d \in \quantumfunctor{d}$, endowed with :
    \begin{itemize}
        \item a family $\mu_F = (\mu_F^{d_1,d_2})_{d_1,d_2 \geq 0}$ of natural transformations $\mu_F^{d_1,d_2} : F_{d_1} \otimes F_{d_2} \to F_{d_1+d_2}$, called the product,
        \item a natural transformation $\eta_F : \field \to F_0$ (where $\field$ is the constant functor $\in \quantumfunctor{0}$), called the unit,
    \end{itemize}
    such that
    \begin{itemize}
        \item $\mu_F$ is associative : for all $d_1,d_2,d_3 \geq 0$,
        \[
            \mu_F^{d_1+d_2,d_3} \circ (\mu_F^{d_1,d_2} \otimes \id_{F_{d_3}}) = \mu_F^{d_1,d_2+d_3} \circ (\id_{F_{d_1}} \otimes \mu_F^{d_2,d_3});
        \]
        \item $\eta_F$ is an unit for $\mu_F$ : for all $d \geq 0$, $\mu^{d,0} \circ (\id_{F_d} \otimes \eta_F) : F_d \otimes \field \to F_d$ and $\mu^{0,d} \circ (\eta_F \otimes \id_{F_d}) : \field \otimes F_d \to F_d$ are the natural isomorphisms.
    \end{itemize}
\end{definition}
For example, $(\bigotimes^d)_{d \geq 0}$, $S_q = (S^d_q)_{d \geq 0}$, $\Gamma_q = (\Gamma^d_q)_{d \geq 0}$ and $\Lambda_q = (\Lambda^d_q)_{d \geq 0}$ can be endowed with products and units such that they become $\quantumfunctor{}$-algebra (see \cite[section 3]{théo2025extgroupcategoryquantumpolynomial}). Tensor products of $\quantumfunctor{}$-algebras can also be endowed with a $\quantumfunctor{}$-algebra structure.
\begin{definition}\label{prop : tensor product of algebra in the category of quantum polynomial functor}
    Let $A = (A_d)_{d \geq 0}$, $B = (B_d)_{d \geq 0}$ be two $\quantumfunctor{}$-algebra. We let 
    \[
        A \otimes B = \left (\bigoplus_{d_1+d_2 = d} A_{d_1} \otimes B_{d_2} \right )_{d \geq 0}.
    \]
    Then $A \otimes B$ can be endowed with a structure of $\quantumfunctor{}$-algebra, with product $\mu_{A \otimes B}$ given by the composition
    \[
        (A_{d_1} \otimes B_{d_2}) \otimes (A_{d_3} \otimes B_{d_4}) \xrightarrow{1 \otimes R_{B_{d_2},A_{d_3}} \otimes 1} (A_{d_1} \otimes A_{d_3}) \otimes (B_{d_2} \otimes B_{d_4}) \xrightarrow{\mu_A \otimes \mu_B} A_{d_1+d_3} \otimes B_{d_2+d_4}\ ,
    \]
    where $R$ denote the braiding of $\quantumfunctor{}$, and $\eta_{A \otimes B} = \eta_A \otimes \eta_B$ (identifying $\field$ and $\field \otimes \field$).
\end{definition}
The fact that this defines a $\quantumfunctor{}$-algebra follows directly from the property of the braiding, see \cite[proposition 2.8]{théo2025extgroupcategoryquantumpolynomial}.

All our examples of $\quantumfunctor{}$-algebra until now except $(\bigotimes^d)_d$ are in fact exponential functors. To define what it means, we introduce some maps.
\begin{notation}\label{notation : inclusions}
    For $d,n,m \geq 0$, we have an isomorphism of right $\hecke{d}$-modules
    \[
        V_{n+m}^{\otimes d} \simeq \bigoplus_{d_1+d_2 = d} \Induction_{\hecke{d_1 \otimes} \hecke{d_2}}^{\hecke{d}}(V_n^{\otimes d_1} \otimes V_m^{\otimes d_2}).
    \]
    Considering the case $d_1 = d$ and the case $d_2 = d$, we obtain injections $V_n^{\otimes d} \to V_{n+m}^{\otimes d}$ and $V_m^{\otimes d} \to V_{n+m}^{\otimes d}$, denoted $\iota_d(n,m)$ and $\iota_d'(n,m)$ respectively.
\end{notation}
\begin{definition}\label{def : exponential functor}
    Let $E = (E_d)_{d \geq 0}$ be a $\quantumfunctor{}$-algebra. Suppose that, for all $n,m \geq 0$ and $d \geq 0$, the map
    \[
        \exp^d_E(n,m) : \bigoplus_{d_1+d_2=d} E_{d_1}(n) \otimes E_{d_2}(m) \to \bigoplus_{d_1+d_2=d} E_{d_1}(n+m) \otimes E_{d_2}(n+m) \xrightarrow{\mu_E} E_d(n+m)
    \]
    called the exponential map, is an isomorphism, where the first map is given by $E_{d_1}(\iota_{d_1}(n,m)) \otimes E_{d_2}(\iota'_{d_2}(n,m))$.
\end{definition}
For more details on exponential functors, see \cite{touze2021structure}.

For $S_q$, the fact that it is an isomorphism can be seen as a consequences of the isomorphism in notation \ref{notation : inclusions}. For $B$, it is then a direct consequences of the following facts.
\begin{proposition}
    Let $A,B$ be two exponential functor. Then $A \otimes B$, with the products of definition \ref{prop : tensor product of algebra in the category of quantum polynomial functor}, is an exponential functor.
\end{proposition}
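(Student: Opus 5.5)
The plan is to show that the exponential map of $A \otimes B$ factors as a composite of isomorphisms built from $\exp_A$, $\exp_B$ and the braiding. Fix $n,m,d$. By definition,
\[
    (A\otimes B)_{d}(k) = \bigoplus_{a+b=d} A_a(k)\otimes B_b(k).
\]
So the source of $\exp^d_{A\otimes B}(n,m)$ is
\[
    \bigoplus_{a_1+b_1+a_2+b_2=d} A_{a_1}(n)\otimes B_{b_1}(n)\otimes A_{a_2}(m)\otimes B_{b_2}(m).
\]
The first step is to apply, on each summand, the braiding $1\otimes R_{B_{b_1},A_{a_2}}\otimes 1$, or more precisely its component at the pair $(V_n,V_m)$. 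This gives an isomorphism onto
\[
    \bigoplus A_{a_1}(n)\otimes A_{a_2}(m)\otimes B_{b_1}(n)\otimes B_{b_2}(m).
\]
Regrouping, and then applying $\exp_A^{a}(n,m)\otimes \exp_B^{b}(n,m)$ with $a=a_1+a_2$ and $b=b_1+b_2$, we land isomorphically in $\bigoplus_{a+b=d}A_a(n+m)\otimes B_b(n+m)=(A\otimes B)_d(n+m)$.

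The second step is to check that this composite equals $\exp^d_{A\otimes B}(n,m)$. Unwinding the definitions, $\exp_{A\otimes B}$ does two things: it applies the maps $A_{a_i}(\iota),B_{b_i}(\iota)$ (resp. $\iota'$) to everything, and then it multiplies with $\mu_{A\otimes B}=(\mu_A\otimes\mu_B)\circ(1\otimes R\otimes 1)$, where the braiding is now taken at $(n+m)$. The key point is naturality of the braiding with respect to the morphisms $\iota_{b_1}(n,m)$ and $\iota'_{a_2}(n,m)$. This naturality lets us move $A(\iota)\otimes B(\iota)$ past $R$, so that the braiding at $(n+m)$ preceded by the inclusions equals the inclusions preceded by the braiding at level $(n,m)$. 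After this commutation, what remains is exactly $\mu_A\circ(A(\iota)\otimes A(\iota'))$ tensored with the analogous map for $B$, that is, $\exp_A\otimes\exp_B$.

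The main obstacle is making sense of the braiding component "at $(V_n,V_m)$". The braiding $R_{F,G}$ of $\quantumfunctor{}$ is defined objectwise on $F(k)\otimes G(k)$ for a single $k$; it is not defined on mixed evaluations $F(n)\otimes G(m)$. I would handle this by defining the intermediate isomorphism as the restriction of $R_{B_{b_1},A_{a_2}}(n+m)$ to the subspace $B_{b_1}(\iota)(B_{b_1}(n))\otimes A_{a_2}(\iota')(A_{a_2}(m))$. The task is then to show that this restriction maps onto $A_{a_2}(\iota')(A_{a_2}(m))\otimes B_{b_1}(\iota)(B_{b_1}(n))$, which I expect to be the key fact.

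To prove it, I would use the description of $R$ from \cite[proposition 2.8]{théo2025extgroupcategoryquantumpolynomial} via the $R$-matrix on $V_{n+m}^{\otimes(b_1+a_2)}$ and the weight decomposition of Proposition \ref{prop:weighttensor}. The image of $\iota$ consists of weights supported on the first $n$ coordinates, and the image of $\iota'$ of weights supported on the last $m$ coordinates. On such disjointly supported weights the $R$-matrix acts as a scaled flip, since the off-diagonal correction terms vanish. Hence $R$ preserves the images, which is exactly what is needed.

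Once this is established, the composite is bijective because each factor is, so $\exp_{A\otimes B}$ is an isomorphism. Associativity and unit of $A\otimes B$ are already given by the preceding definition.
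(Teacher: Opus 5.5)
Your strategy is the paper's own. Its proof is the single sentence that $\exp^d_{A\otimes B}$ is a composite of braidings and of the maps $\exp_A$, $\exp_B$. You are right that this needs care: $R$ only exists on $F(k)\otimes G(k)$ for a single $k$, so one has to restrict $R(n+m)$ to $\Image B_{b_1}(\iota)\otimes \Image A_{a_2}(\iota')$.

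The gap is in your justification that this restriction lands in $\Image A_{a_2}(\iota')\otimes \Image B_{b_1}(\iota)$. Disjointness of supports does not by itself kill the correction terms. A Hecke generator (or the $R$-matrix) sends $v_i\otimes v_j$ to a multiple of $v_j\otimes v_i$ for only one of the two orders $i<j$, $i>j$. For the other order it also produces a multiple of $v_i\otimes v_j$, and replacing the braiding by the inverse braiding exchanges the two cases. For tensor powers the braiding only swaps an entry of the left factor with an entry of the right factor. So what you actually need is that the configuration ``left factor supported in $\{1,\dots,n\}$, right factor in $\{n+1,\dots,n+m\}$'' is the favourable one for the braiding of $\quantumfunctor{}$ with the paper's choice of $\iota,\iota'$. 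That is a convention check you have not made. With the opposite convention, the extra term has its $A$-component in the first $n$ coordinates and its $B$-component in the last $m$. It then does not lie in $\Image A_{a_2}(\iota')\otimes \Image B_{b_1}(\iota)$, and the exact factorization fails.

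You also still need to pass from tensor powers to arbitrary $A_{a_2}$, $B_{b_1}$. This follows from three facts: naturality of $R$ in both variables; the image subspaces are sums of weight spaces, hence compatible with subobjects and quotients; and every object of $\quantumfunctor{d}$ is a subquotient of a direct sum of copies of $\tensorfunctor{d}$.

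A convention-independent way to finish is to use triangularity instead of an exact factorization.
\begin{itemize}
\item For $s\ge 0$, let $A_a(n+m)^{(s)}$ be the sum of the weight spaces $A_a(n+m)_\alpha$ with $\alpha_1+\dots+\alpha_n=s$. By the exponential property of $A$, $\exp_A$ maps $A_s(n)\otimes A_{a-s}(m)$ isomorphically onto it, and similarly for $B$.
\item Filter the source of $\exp^d_{A\otimes B}$ by $a_1$, and the target by the first-block weight $s$ of the $A$-component.
\item On $A_{a_1}(n)\otimes B_{b_1}(n)\otimes A_{a_2}(m)\otimes B_{b_2}(m)$, the leading term of the braiding contributes, in degree $s=a_1$, exactly your composite: the flip up to nonzero scalars, followed by $\exp_A\otimes\exp_B$.
\item Every correction term moves weight of the $A$-component from the last $m$ coordinates into the first $n$, hence lands in degree $s>a_1$.
\end{itemize}
Thus $\exp^d_{A\otimes B}$ respects the filtrations and induces an isomorphism on associated graded pieces, so it is an isomorphism whichever convention is in force. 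This also supplies the detail that the paper's one-line proof leaves implicit.
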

\begin{proof}
    The map $\exp_{A \otimes B}^d$ is a composition of braidings and maps $\exp_A^{d_1}$ and $\exp_B^{d_2}$, which are all isomorphisms. Hence $\exp_{A \otimes B}^d$ is an isomorphism.
\end{proof}
In the case of $B$, we also have on each $B_d$ a graduation. It can be verified that the products and the exponential we defined are graded maps. We summarize these informations in the following proposition.
\begin{proposition}\label{prop : structure of the underlying}
    The family $(B_d)_{d \geq 0}$ is a graded exponential functor.
\end{proposition}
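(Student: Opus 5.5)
The plan is to realise $B_d$ as a tensor product of $\ell$ copies of the exponential functor $S_q$, each carrying a grading shift, and then to invoke the two facts recorded just before the statement. First I identify the underlying functor. For $\alpha \in \Omega(d,\ell)$ we have $S^\alpha_q = S^{\alpha_1}_q \otimes \cdots \otimes S^{\alpha_\ell}_q$, so
\[
    B_d = \bigoplus_{\alpha \in \Omega(d,\ell)} S^{\alpha_1}_q \otimes \cdots \otimes S^{\alpha_\ell}_q
\]
is exactly the degree-$d$ component of the $\quantumfunctor{}$-algebra $S_q^{\otimes \ell} = S_q \otimes \cdots \otimes S_q$ of definition \ref{prop : tensor product of algebra in the category of quantum polynomial functor}. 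I take the algebra structure on $(B_d)_{d\geq 0}$ to be the iterated tensor product structure, bracketed as $((S_q \otimes S_q)\otimes \cdots )\otimes S_q$. Associativity of the bracketing is irrelevant here, since only one bracketing is used to define the product.

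Next I establish the exponential property. The paper notes that $S_q$ is exponential, via the induction decomposition in notation \ref{notation : inclusions}. Applying the preceding proposition $\ell-1$ times, by induction on the number of factors, shows that $S_q^{\otimes \ell}$, and hence $(B_d)$, is an exponential functor.

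For the grading, I give the $i$-th factor $S_q$ the grading in which $S^{m}_q$ sits in degree $(i-1)m$. A summand $S^{\alpha}_q$ then sits in degree $\sum_i (i-1)\alpha_i = \deg(\alpha)$, which agrees with the grading defined in \S\ref{sec:comp}. I then check that each structure map is graded.

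1. The product on each $S_q$ maps $S^{m}_q \otimes S^{m'}_q \to S^{m+m'}_q$, and the degree $(i-1)(m+m')$ is additive.
2. The braidings $R_{S^{a}_q, S^{b}_q}$ used in the tensor product product only permute factors. They therefore preserve total degree when the tensor product grading is additive.
3. The maps $E_{d_1}(\iota_{d_1}(n,m))$ and $E_{d_2}(\iota'_{d_2}(n,m))$ appearing in the exponential map are images of morphisms under the functor. Because the grading comes from a direct sum decomposition of functors, these maps preserve each summand $S^\alpha_q$ and hence its degree.

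So $\exp_B$ is a composition of graded maps, and its inverse is graded as well.

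The only point requiring real care is the first step: the exponential property of $S_q$ itself. Here one must check that the composite
\[
    S^{d_1}_q(n)\otimes S^{d_2}_q(m) \to S^{d_1}_q(n+m)\otimes S^{d_2}_q(n+m) \to S^{d}_q(n+m)
\]
matches, summand by summand, the isomorphism coming from $V_{n+m}^{\otimes d} \simeq \bigoplus \Induction(V_n^{\otimes d_1}\otimes V_m^{\otimes d_2})$ after passing to the symmetric quotient. I would reduce this to weights. By propositions \ref{prop:weighttensor} and \ref{prop:symmetric power property}, both sides have the same dimension, namely the number of compositions of $d$ into $n+m$ parts. It therefore suffices to show surjectivity, and this I would check on monomials: a product of a monomial in the first $n$ variables and a monomial in the last $m$ variables gives, up to a nonzero power of $q$, every basis monomial of $S^d_q(n+m)$.
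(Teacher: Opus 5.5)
Your proof is correct and follows the paper's route. You identify $B_d$ with the degree-$d$ part of $S_q^{\otimes \ell}$, apply the proposition that tensor products of exponential functors are exponential, and check directly that products, braidings and exponential maps respect the grading $\deg(\alpha)=\sum_i (i-1)\alpha_i$. The only addition is your explicit dimension count plus surjectivity on monomials for the exponential property of $S_q$ itself, which the paper simply attributes to the induction decomposition of $V_{n+m}^{\otimes d}$.
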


\begin{warning}
    The product that we will use on $(B_d)_{d \geq 0}$ to define the quantum Troesch complex will not be the product defined in \ref{prop : tensor product of algebra in the category of quantum polynomial functor} but a twisted version of it. With this twisted version, $(B_d)_{d \geq 0}$ will remains an exponential functor.
\end{warning}
% Here, we don't precise the products, but it can be constructed from the definition \ref{prop : tensor product of algebra in the category of quantum polynomial functor}. The problem is that this will not be the good product for our purpose. We will need to twist it a little for it to be compatible with the $\ell$-differential, see lemma \ref{lem:prodB}. 

\subsection{Definition of the $3$-differential}

We now construct quantum Troesch complexes for the case $\ell = 3$. More precisely, we need to construct a $3$-differential $\delta : B_{3d} \to B_{3d}$, and to prove that the resulting $3$-complexes are $3$-coresolutions of $\twist{(S^d)}$.
\begin{definition}\label{def : Troesch pour l=3}
    Let $d \geq 0$. Recall that
    \[
        B_d = \bigoplus_{\alpha \in \Omega(d,3)} S^{\alpha_1}_q \otimes S^{\alpha_2}_q \otimes S^{\alpha_3}_q
    \]
    with $S^{\alpha_1}_q \otimes S^{\alpha_2}_q \otimes S^{\alpha_3}_q$ in degree $\alpha_2 + 2 \alpha_3$. Let $\delta_1 : B_d \to B_d$, $\delta_2 : B_d \to B_d$ be the degree $1$ maps given on $S^{\alpha_1}_q \otimes S^{\alpha_2}_q \otimes S^{\alpha_3}_q$ by
    \[
        \delta_{1|S^\alpha_q} = (1 \otimes \mu^{(1,\alpha_2)} \otimes 1)(\Delta^{(\alpha_1-1,1)} \otimes 1 \otimes 1) \quad \mbox{and} \quad \delta_{2|S^\alpha_q} = (1 \otimes 1 \otimes \mu^{(1,\alpha_3)})(1 \otimes \Delta^{(\alpha_2-1,1)} \otimes 1)
    \] 
    where $\mu^{(i,j)} : S^i_q \otimes S^j_q \to S^{i+j}_q$ is the product, and $\Delta^{(i,j)} : S^{i+j}_q \to S^i_q \otimes S^j_q$ is the coproduct (see \cite[Section 3]{théo2025extgroupcategoryquantumpolynomial}). We set $\delta : B_d \to B_d$ to be the degree $1$ map given on $S^{\alpha_1}_q \otimes S^{\alpha_2}_q \otimes S^{\alpha_3}_q$ by
    \[
        \delta_{|S^{\alpha}_q} = \delta_1 + (q^2)^{\alpha_2} \delta_2.
    \]
\end{definition}

As noted earlier, in order to prove that $\delta$ is a $3$-differential, we will twist the product obtained with proposition \ref{prop : tensor product of algebra in the category of quantum polynomial functor} by identifying $B$ with $(S^*_q)^{\otimes 3}$, so that $B$ is a graded differential $\quantumfunctor{}$-algebra (for details on the relation of compatibility, see lemma \ref{lem:prodB}).
\begin{notation}
    To write the definition of our product, we introduce several notations to express natural transformations $S^{\alpha}_q \to S^\beta_q$ in terms of elements. Let $d_1,d_2 \geq 0$ and $d = d_1 + d_2$.
    \begin{itemize}
        \item The products : $\mu = \mu^{(d_1,d_2)} :S^{d_1}_q \otimes S^{d_2}_q \to S^d_q$. We will write $\mu(x \otimes y) = xy$. These natural transformation induces a graded algebra structure on $S^*_q(n)$ for $n \geq 0$. In particular, $(xy)z = x(yz)$, and we can remove the parenthesis. The products are surjectives.
        \item The coproducts : $\Delta^{(d_1,d_2)} : S^d_q \to S^{d_1}_q \otimes S^{d_2}_q$. We will write 
        \[
            \Delta^{(d_1,d_2)}(x) = \sum_{x : (d_1,d_2)} x' \otimes x''.
        \]
        When $d_2 = 1$, we just write $\Delta^{(d-1,1)}(x) = \sum x' \otimes x''$ to simplify. They induces a graded coalgebra structure on $S^*_q(n)$. In particular, we can define more general maps $\Delta^{(d_1,d_2,...,d_k)} : S^{d_1 + d_2 + \cdots + d_k}_q \to S^{d_1}_q \otimes S^{d_2}_q \otimes \cdots \otimes S^{d_k}_q$, and we write
        \[
            \Delta^{(d_1,d_2,d_3)}(x) = \sum_{x: (d_1,d_2,d_3)} x' \otimes x'' \otimes x'''
        \]
        and so on.
        \item The switch : $R^{(d_1,d_2)} : S^{d_1}_q \otimes S^{d_2}_q \to S^{d_2}_q \otimes S^{d_1}_q$, given by the braiding. We write
        \[
            R^{(d_1,d_2)}(x \otimes y) = \sum r(y) \otimes r(x).
        \]
        Note that $r(y)$ depends on $r(x)$ and vice versa. More generally, we put indexes to explicit the order of the switches we apply. For example :
        \[
            \sum r_2(r_1(z)) \otimes r_2(x) \otimes r_1(y)
        \]
        means that, starting from $x \otimes y \otimes z$, we first switch $y$ and $z$, then $x$ and $z$. Note that
        \begin{equation}
            \sum r_1(y_1) \otimes r_1(x_1) \otimes r_2(y_2) \otimes r_2(x_2) = \sum r_2(y_1) \otimes r_2(x_1) \otimes r_1(y_2) \otimes r_1(x_2)
        \end{equation}
        with these notations. The switches are isomorphisms.
    \end{itemize}
    Finally, if $x \in S^d_q$, we write $|x|=d$.
\end{notation}
We have a number of relations expressing compatibility between products, coproducts and switches, that we list here. They can all be found in \cite[Section 3]{théo2025extgroupcategoryquantumpolynomial}.
\begin{align}
    \sum r(y)r(x) & = q^{|a||b|} xy \ ; \label{eq:detwist} \\
    \sum r_1(y)r_2(z) \otimes r_2(r_1(x)) & = \sum r(yz) \otimes r(x) \ ; \label{eq:switchprodbc} \\
    \sum r_2(r_1(z)) \otimes r_2(x) r_1(y) & = \sum r(z) \otimes r(xy) \ ; \label{eq:switchprodab} \\
    \sum_{x:(d_1,d_2)} r_2(r_1(y)) \otimes r_2(x') \otimes r_1(x'') & = \sum_{r(x):(d_1,d_2)} r(y) \otimes r(x)' \otimes r(x)'' \ ; \label{eq:switchcoproda} \\
    \sum_{y:(d_1,d_2)} r_1(y') \otimes r_2(y'') \otimes r_2(r_1(x)) & = \sum_{r(y):(d_1,d_2)} r(y)' \otimes r(y)'' \otimes r(x) \ ; \label{eq:switchcoprodb} \\
    \sum_{xy : (d_1,d_2)} (xy)' \otimes (xy)'' & = \sum_{\substack{x:(i,j) \\ y:(d_1-i,d_2-j)}} q^{j(d_1-i)} x' r(y') \otimes r(x'') y'' \ . \label{eq:dumbbell}
\end{align}
The relation (\ref{eq:dumbbell}) in the special case $d_2=1$ can be rewritten as
\begin{equation}\label{eq:dumbbell1}
    \sum(xy)' \otimes (xy)'' = \left ( \sum xy' \otimes y'' \right ) + q^{|y|} \left ( \sum x'r(y) \otimes r(x'') \right ).
\end{equation}
With these notations, $\delta$ can be rewritten as
\[
    \delta(x_1 \otimes x_2 \otimes x_3) = \left (\sum x_1' \otimes x_1''x_2 \otimes x_3 \right ) + q^{2|x_2|} \left (\sum x_1 \otimes x_2' \otimes x_2''x_3 \right ) .
\]
We now define the products. 
% To prove that it is a $3$-differential, we first define surjectives graded maps $\mu^{(d_1,d_2)}_B : B_{d_1} \otimes B_{d_2} \to B_{d_1+d_2}$ which commutes with $\delta$. The surjectivity can then be used to prove that $\delta^3 = 0$.

\begin{lemma}\label{lem:prodB}
    Let $d_1,d_2 \geq 0$ and $d = d_1 + d_2$. Define $\mu^{(d_1,d_2)}_B : B_{d_1} \otimes B_{d_2} \to B_{d_1+d_2}$ by
    \[
        \mu^{(d_1,d_2)}_B(a \otimes b) = q^{(|a_2| + |a_3|)|b_1|} \sum a_1 r_2(r_1(b_1)) \otimes r_2(a_2) r_3(b_2) \otimes r_3(r_1(a_3))b_3
    \]
    where $a = a_1 \otimes a_2 \otimes a_3$ and $b = b_1 \otimes b_2 \otimes b_3$. 
    Then $\mu_B$ defines an associative product on $B_*$, making it a graded differential $\quantumfunctor{}$-algebra and a graded exponential functors. In particular, $\mu_B^{(d_1,d_2)}$ is surjective and 
    \[
        \mu^{(d_1,d_2)}_B \circ \delta_{B_{d_1} \otimes B_{d_2}} = \delta \circ \mu_B^{(d_1,d_2)}    
    \]
    where $\delta_{B_{d_1} \otimes B_{d_2}}$ is defined as in definition \ref{def:ltensor}. 
\end{lemma}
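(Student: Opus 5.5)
The plan is to regard $\mu_B$ as a scalar twist of the product on $(S^*_q)^{\otimes 3}$ given by the definition of the tensor product of $\quantumfunctor{}$-algebras, identifying $B_d$ with $\bigoplus_{d_1+d_2+d_3=d} S^{d_1}_q\otimes S^{d_2}_q\otimes S^{d_3}_q$. Denote by $\mu_0$ the iterated braided tensor product of the algebras $S_q$. I first check from the definition, using the braiding relations (\ref{eq:switchprodbc}), (\ref{eq:switchprodab}) and the hexagon identities, that
\[
\mu_0(a\otimes b)=\sum a_1 r_2(r_1(b_1)) \otimes r_2(a_2) r_3(b_2) \otimes r_3(r_1(a_3))b_3 .
\]
Thus $\mu_B(a\otimes b)=q^{c(a,b)}\mu_0(a\otimes b)$ with $c(a,b)=(|a_2|+|a_3|)|b_1|$.

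The form $c$ is bilinear in the multidegrees $(|a_1|,|a_2|,|a_3|)$ and $(|b_1|,|b_2|,|b_3|)$, and $\mu_0$ is additive on multidegrees. Hence $c$ is a $2$-cocycle:
\[
c(a,b)+c(ab,e)=c(a,be)+c(b,e).
\]
Associativity of $\mu_B$ then follows from associativity of $\mu_0$, and the unit of $\mu_0$ remains a unit because $c$ vanishes when one argument has degree $0$.

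The grading is preserved since $\deg$ is additive. The exponential map of $\mu_B$ differs from that of $\mu_0$ by a nonzero scalar on each summand $S^\alpha_q(n)\otimes S^\beta_q(m)$. Since $\mu_0$ is exponential by Proposition \ref{prop : structure of the underlying}, so is $\mu_B$. For surjectivity, take $\gamma\in\Omega(d,3)$ and write $\gamma=\alpha+\beta$ with $|\alpha|=d_1$ and $|\beta|=d_2$. Restricted to $S^\alpha_q\otimes S^\beta_q$, the map $\mu_B$ is a nonzero scalar times a composite of switches, which are isomorphisms, and products of $S_q$, which are surjective. Its image is therefore all of $S^\gamma_q$.

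The main step is the Leibniz identity
\[
\delta(\mu_B(a\otimes b))=\mu_B(\delta a\otimes b)+q^{2\deg a}\mu_B(a\otimes\delta b),
\]
with $\deg a=|a_2|+2|a_3|$, which is exactly the stated relation $\mu_B\circ\delta_{B_{d_1}\otimes B_{d_2}}=\delta\circ\mu_B$. I will expand $\delta=\delta_1+q^{2|x_2|}\delta_2$ on $\mu_B(a\otimes b)$. For $\delta_1$, I apply (\ref{eq:dumbbell1}) to the first factor $a_1\,r_2(r_1(b_1))$ and use (\ref{eq:switchcoproda}), (\ref{eq:switchcoprodb}) to move the coproduct of $b_1$ through the switches. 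This gives two groups of terms: one reproducing $\mu_B(\delta_1 a\otimes b)$, and one reproducing $\mu_B(a\otimes\delta_1 b)$ plus a cross term in which the piece $b_1''$ lands in the second slot next to $r(a_2)$. The same treatment of $\delta_2$ on the middle factor $r_2(a_2)r_3(b_2)$ produces $\mu_B(\delta_2 a\otimes b)$, $\mu_B(a\otimes \delta_2 b)$, and a cross term in which $a_2''$ passes past $b_2$, $b_1$ into the third slot.

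I expect the main obstacle to be this bookkeeping. One has to track the powers of $q$ produced by (\ref{eq:detwist}) when reordering factors, by (\ref{eq:dumbbell1}), by the change of $c$ when $\delta$ shifts one unit between tensor slots, and by the coefficients $q^{2|x_2|}$ and $q^{2\deg a}$. The goal is to show that the two cross terms cancel and that the remaining coefficients match. I will first carry out the check for $d_1,d_2\le 1$, i.e.\ on weight vectors, to pin down the signs and exponents. Then I will do the general computation in the element notation, reorganizing each cross term with (\ref{eq:switchprodbc}) and (\ref{eq:switchprodab}) so that the two terms become literally equal up to $-1$. Here I use $1+q+q^2=0$ for $q$ a primitive cube root of unity, which is where $\ell=3$ and the normalization $q^{c(a,b)}$ should be essential.
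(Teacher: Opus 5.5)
\textbf{Verdict: there is a gap.} Your handling of the formal properties is fine. Writing $\mu_B=q^{c}\mu_0$, with $c(a,b)=(|a_2|+|a_3|)|b_1|$ bilinear in the multidegrees, gives associativity, unit, grading, exponentiality and surjectivity directly from the braided product $\mu_0$. The paper instead invokes (\ref{eq:switchprodbc}), (\ref{eq:switchprodab}) and notes that only nonzero scalars were inserted.

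The gap is the Leibniz identity, which is the entire content of the lemma. You only announce it, and the mechanism you predict is wrong, so your roadmap would have you look for a cancellation that does not exist. There are no cross terms.

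Relation (\ref{eq:dumbbell1}) yields exactly two sums for each of $\delta_1$ and $\delta_2$. So $\delta\mu_B(a\otimes b)$ is the sum of the four expressions (\ref{eq:sum1})--(\ref{eq:sum4}). After the switch relations, each of these is the same sum as exactly one of the four terms of $\mu_B\delta_{B_{d_1}\otimes B_{d_2}}(a\otimes b)$, with nothing left over.
\begin{itemize}
\item The term where $b_1''$ lands in the second slot next to $r(a_2)$ is not ``$\mu_B(a\otimes\delta_1 b)$ plus a cross term''. Apply (\ref{eq:switchcoprodb}) twice, then (\ref{eq:detwist}) (costing exactly $q^{|a_2|}$), then (\ref{eq:switchprodbc}); this turns it into $\mu_B(a\otimes\delta_1 b)$ up to a scalar.
\item The term in which $a_2''$ passes $b_2$ and $b_1$ into the third slot is precisely $q^{2|a_2|}\mu_B(\delta_2 a\otimes b)$, via (\ref{eq:switchcoproda}) and (\ref{eq:switchprodab}). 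It is not something to be cancelled.
\end{itemize}

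What remains is only to compare scalars. For each way (\ref{eq:dumbbell1}) splits a factor, the left column is the scalar obtained in $\delta\mu_B(a\otimes b)$. The right column is the scalar of the same sum in $\mu_B\delta_{B_{d_1}\otimes B_{d_2}}(a\otimes b)$.
\begin{align*}
\text{$a_1$ split: } & q^{c+|b_1|} && \text{vs. } \mu_B(\delta_1 a\otimes b)\colon\ q^{c+|b_1|},\\
\text{$b_1$ split: } & q^{c+|a_2|} && \text{vs. } q^{2\deg a}\mu_B(a\otimes\delta_1 b)\colon\ q^{c+|a_2|+3|a_3|},\\
\text{$a_2$ split: } & q^{c+2|a_2|+3|b_2|} && \text{vs. } q^{2|a_2|}\mu_B(\delta_2 a\otimes b)\colon\ q^{c+2|a_2|},\\
\text{$b_2$ split: } & q^{c+2|a_2|+|a_3|+2|b_2|} && \text{vs. } q^{2\deg a+2|b_2|}\mu_B(a\otimes\delta_2 b)\colon\ q^{c+2|a_2|+4|a_3|+2|b_2|}.
\end{align*}
So the only input from $\ell=3$ is $q^3=1$, used to discard $q^{3|a_3|}$ and $q^{3|b_2|}$. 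The identity $1+q+q^2=0$ plays no role here; it is what gives $\delta^3=0$ on tensor products of $3$-complexes.

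The table also shows why the twist is needed:
\begin{itemize}
\item $c(\delta_1a,b)=c+|b_1|$ absorbs the $q^{|b_1|}$ produced by (\ref{eq:dumbbell1}).
\item $c(a,\delta_1b)=c-|a_2|-|a_3|$ reconciles the $q^{|a_2|}$ from (\ref{eq:detwist}) with $q^{2\deg a}$.
\end{itemize}
With $c=0$ the first two lines fail. Once you carry out the computation in this form, your argument coincides with the paper's.
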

As we said before, this product is not the most natural one. It would be more natural to do the same product without $q^{(|a_2| + |a_3|)|b_1|}$. But we would not have compatibility with the differential. It is the principal difficulty to find a differential and a product working together, and it is the reason the author was not able to construct Troesch complexes for $\ell > 3$.
\begin{proof}
    We will only detail the compatibility with the differential, which is the difficult (and most useful) part. So we start by rushing the rest of the facts.
    
    The $\mu_B^{(d_1,d_2)}$ are compositions of natural transformations (switches and products) and hence is natural. The associativity comes from several applications of (\ref{eq:switchprodbc}), (\ref{eq:switchprodab}) and the associativity of the products of $S^*_q$. The unit is simply $1 \otimes 1 \otimes 1$. The fact that it is graded is simple to verify. For the exponential property, since we only have made some multiplication by non-zero scalars, the image of the exponential map does not change from the one for the product of proposition \ref{prop : structure of the underlying}. Hence, the new exponential map is also an isomorphism.
    
    We now prove the compatibility with the differential. We compute
    \begin{align*}
        \delta & \mu^{(d_1,d_2)}_B (a \otimes b) = \\ & q^{(|a_2| + |a_3|)|b_1|} \sum (a_1 r_2(r_1(b_1)))' \otimes (a_1 r_2(r_1(b_1)))'' r_2(a_2) r_3(b_2) \otimes r_3(r_1(a_3))b_3 \\
        & + q^{(|a_2| + |a_3|)|b_1| + 2|a_2| + 2|b_2|} \sum a_1 r_2(r_1(b_1)) \otimes (r_2(a_2) r_3(b_2))' \otimes (r_2(a_2) r_3(b_2))'' r_3(r_1(a_3))b_3.
    \end{align*}
    Take these two sums, and applies relation (\ref{eq:dumbbell1}) on each (with $x=a_1,y=r_2(r_1(b_1))$ for the first one, and $x=r_2(a_2),y=r_3(b_2)$ for the second one) to obtain the four sums
    \begin{align}
        &q^{(|a_2| + |a_3|)|b_1|} \sum a_1 r_2(r_1(b_1))' \otimes r_2(r_1(b_1))'' r_2(a_2) r_3(b_2) \otimes r_3(r_1(a_3))b_3, \label{eq:sum1} \\
        &q^{|a_2| + |a_3|)|b_1| + |b_1|} \sum a_1' r_4(r_2(r_1(b_1))) \otimes r_4(a_1'') r_2(a_2) r_3(b_2) \otimes r_3(r_1(a_3))b_3, \label{eq:sum2} \\
        &q^{(|a_2| + |a_3|)|b_1|} \sum a_1 r_2(r_1(b_1)) \otimes r_2(a_2) r_3(b_2)' \otimes r_3(b_2)'' r_3(r_1(a_3))b_3, \label{eq:sum3} \\
        &q^{|a_2| + |a_3|)|b_1| + |b_2|} \sum a_1 r_2(r_1(b_1)) \otimes r_2(a_2)' r_4(r_3(b_2)) \otimes r_4(r_2(a_2))'' r_3(r_1(a_3))b_3. \label{eq:sum4}
    \end{align}
    We manipulate these four sums separately. For the sum (\ref{eq:sum1}), apply relation (\ref{eq:switchcoprodb}) a first time (with $x=a_2, y=r_1(b_1)$) to obtain
    \[
        q^{(|a_2| + |a_3|)|b_1|} \sum a_1 r_2(r_1(b_1)') \otimes r_3(r_1(b_1)'') r_3(r_2(a_2)) r_4(b_2) \otimes r_4(r_1(a_3))b_3,
    \]
    and then a second time (with $x=a_3,y=b_1$) to obtain
    \[
        q^{(|a_2| + |a_3|)|b_1|} \sum a_1 r_3(r_1(b_1')) \otimes r_4(r_2(b_1'')) r_4(r_3(a_2)) r_5(b_2) \otimes r_5(r_2(r_1(a_3)))b_3.
    \]
    Then use relation (\ref{eq:detwist}) (with $x=r_3(a_2), y=r_2(b_1'')$),
    \[
        q^{(|a_2| + |a_3|)|b_1| + |a_2|} \sum a_1 r_3(r_1(b_1')) \otimes r_3(a_2) r_2(b_1'')  r_4(b_2) \otimes r_4(r_2(r_1(a_3)))b_3.
    \]
    This is equal to
    \[
        q^{(|a_2| + |a_3|)|b_1| + |a_2|} \sum a_1 r_4(r_1(b_1')) \otimes r_4(a_2) r_2(b_1'')  r_3(b_2) \otimes r_3(r_2(r_1(a_3)))b_3
    \]
    (we just interchanged $r_3$ and $r_4$). Finally, apply relation (\ref{eq:switchprodbc}) with $x=r_1(a_3),y=b_1'',z=b_2$ to obtain
    \[
        q^{(|a_2| + |a_3|)|b_1| + |a_2|} \sum a_1 r_3(r_1(b_1')) \otimes r_3(a_2) r_2(b_1''b_2) \otimes r_2(r_1(a_3)))b_3.
    \]
    For (\ref{eq:sum2}), interchange $r_4$ and $r_3$, then apply relation (\ref{eq:switchprodab}) with $x=a_1'',y=a_2,z=r_1(b_1)$ to obtain
    \[
        q^{(|a_2|+1+|a_3|)|b_1|} \sum a_1' r_2(r_1(b_1)) \otimes r_2(a_1''a_2) r_3(b_2) \otimes r_3(r_1(a_3))b_3.
    \]
    For (\ref{eq:sum3}), apply relation (\ref{eq:switchcoprodb}) with $x=a_3, y=b_2$, then (\ref{eq:detwist}) with $x=r_3(r_1(a_3)),y=b''_2$ to obtain
    \[
        q^{(|a_2| + |a_3|)|b_1| + 2|a_2| + |a_3| + 2|b_2|} \sum a_1 r_2(r_1(b_1)) \otimes r_2(a_2) r_3(b_2') \otimes r_3(r_1(a_3))b''_2b_3.
    \]
    Finally, for (\ref{eq:sum4}), apply (\ref{eq:switchcoproda}) with $x=a_2, y=r_1(b_1)$, then (\ref{eq:switchprodab}) with $x=r_2(a''_2), y=r_1(a_3), z=b_2$, and finally (\ref{eq:switchprodab}) with $x=a''_2, y=a_3,z=b_1$ to obtain
    \[
        q^{(|a_2| + |a_3|)|b_1|  + 2|a_2| +3|b_2|} \sum a_1 r_2(r_1(b_1)) \otimes r_2(a_2') r_3(b_2) \otimes r_3(r_1(a_2''a_3))b_3.
    \]
    To conclude, do the sum of the four sums, and compare with $\mu_B \delta$ (using $q^3 = 1$).
\end{proof}
\begin{lemma}
    For any $d \geq 0$, $(B_d,\delta)$ is a $3$-complex.
\end{lemma}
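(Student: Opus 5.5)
The plan is to exploit the multiplicative structure provided by Lemma \ref{lem:prodB}, rather than computing $\delta^3$ directly on every summand $S^{\alpha_1}_q \otimes S^{\alpha_2}_q \otimes S^{\alpha_3}_q$.

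\textbf{Reduction to degree one.} Iterating Lemma \ref{lem:prodB} and using associativity, the multiplication
\[
\mu_B : B_1^{\otimes d} \to B_d
\]
is surjective. This holds because each $\mu_B^{(d_1,d_2)}$ is surjective, and a composite of surjections is surjective. It intertwines $\delta$ with the differential of the $d$-fold tensor product of $\ell$-complexes $B_1 \otimes \cdots \otimes B_1$ built as in Definition \ref{def:ltensor}. By the computation following Definition \ref{def:ltensor}, based on $g\circ f = q^2 f \circ g$ and the vanishing of the $q^2$-binomial coefficients $\binom{3}{k}_{q^2}$ for $0<k<3$ (valid since $q^2$ is also a primitive cube root of unity, $\ell=3$ being odd), a tensor product of two $3$-complexes is again a $3$-complex. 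By induction, $B_1^{\otimes d}$ is a $3$-complex provided $B_1$ is. Then
\[
\delta^3 \circ \mu_B = \mu_B \circ \delta_{B_1^{\otimes d}}^3 = 0,
\]
and surjectivity of $\mu_B$ gives $\delta^3 = 0$ on $B_d$. The case $d=0$ is trivial, since $B_0 = \field$ sits in degree $0$ and $\delta = 0$ there.

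\textbf{The base case $d=1$.} Here $B_1 = S^1_q \oplus S^1_q \oplus S^1_q = I\oplus I\oplus I$, placed in degrees $0,1,2$. Reading off the formula for $\delta$, the map $\delta_1$ sends the degree-$0$ copy $x\otimes 1\otimes 1$ to $1\otimes x\otimes 1$, since $\Delta^{(0,1)}(x) = 1\otimes x$. The map $\delta_2$ sends the degree-$1$ copy to the degree-$2$ copy, up to the scalar $q^{2}$. The composite $\delta^3$ shifts degree by $3$, but $B_1$ is concentrated in degrees $0$ to $2$, so $\delta^3 = 0$ for degree reasons.

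\textbf{Main obstacle.} The only delicate point is making sure the compatibility in Lemma \ref{lem:prodB} iterates correctly. For $\mu_B^{(d_1,d_2)}\circ(\mu_B^{(d_1',d_1'')}\otimes 1)$, one must check that
\[
\mu_B^{(d_1',d_1'')}\otimes 1 : (B_{d_1'}\otimes B_{d_1''})\otimes B_{d_2} \to B_{d_1}\otimes B_{d_2}
\]
commutes with the tensor-product differentials. This holds because $\mu_B$ is graded, so the twisting factor $q^{2i}$ in Definition \ref{def:ltensor} depends only on the total degree $i$ of the left factor, and that degree is preserved by $\mu_B$. One also needs the tensor product of $3$-complexes to be associative as $3$-complexes. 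This follows because the exponents $q^{2i}$ are additive in degree, so both bracketings produce the coefficient $q^{2(i+j)}$ on the third factor.

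Alternatively, one could skip the reduction to degree one and argue directly. Lemma \ref{lem:prodB} gives $B_d$ as a quotient of $B_{d-1}\otimes B_1$ compatibly with differentials, and induction on $d$ applies provided $B_{d-1}$ and $B_1$ are $3$-complexes.
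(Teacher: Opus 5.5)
Your proposal is correct and is the paper's proof: the iterated product $B_1^{\otimes d}\to B_d$ is surjective, it intertwines the tensor-product $3$-differential with $\delta$, and $\delta^3=0$ on $B_1$ for degree reasons, so $\delta^3=0$ on $B_d$. Your extra checks fill in details the paper leaves implicit: that gradedness of $\mu_B$ makes the compatibility of Lemma \ref{lem:prodB} iterate, and that the $\ell$-tensor product is associative.
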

\begin{proof}
    For $B_0$ and $B_1$, this is trivial :
    \begin{align*}
        B_0 : \ & S^0_q \otimes S^0_q \otimes S^0_q,\\
        B_1 : \ & S^1_q \otimes S^0_q \otimes S^0_q \to S^0_q \otimes S^1_q \otimes S^0_q \to S^0_q \otimes S^0_q \otimes S^1_q.
    \end{align*}
    For $d > 1$, by applying $\mu_B$ several times, we have a surjective map $m : \underbrace{B_1 \otimes \cdots \otimes B_1}_{d} \to B_d$ such that $m \circ \delta_{B_1 \otimes \cdots \otimes B_1} = \delta \circ m$, but since $\delta^3_{B_1 \otimes \cdots \otimes B_1} = 0$ and $m$ is surjective, we conclude that $\delta^3 = 0$.
\end{proof}

\subsection{Cohomology of $B_d$}

In the rest of this article, we just write $ab$ for $\mu_B(a \otimes b)$. To show that $B_{3d}$ is a $3$-coresolution of $\twist{(S^{d})}$, we will use the exponential property to compute the homology of $B(n)$ from the homology of $B(1)$.
\begin{lemma}\label{lem : compatibility of exp with the differential}
    For any $n \geq 0$, the following diagram commutes.
    \begin{center}
        \begin{tikzcd}
            \bigoplus_{\alpha \in \Omega(d,n)} B_{\alpha_1}(1) \otimes \cdots \otimes B_{\alpha_n}(1) \ar[r, "\simeq"] \ar[d, "\delta_{B_{\alpha_1} \otimes \cdots \otimes B_{\alpha_n}}"]
            & B_d(n) \ar[d,"\delta"] \\
            \bigoplus_{\alpha \in \Omega(d,n)} B_{\alpha_1}(1) \otimes \cdots \otimes B_{\alpha_n}(1) \ar[r, "\simeq"]
            & B_d(n)
        \end{tikzcd}
    \end{center}
    Here the horizontal arrows are obtained by applying several times $\exp_B$.
\end{lemma}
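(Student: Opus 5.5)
The plan is to reduce the statement to the case $n=2$ and then to the compatibility of $\mu_B$ with $\delta$ established in lemma \ref{lem:prodB}. The horizontal isomorphism for general $n$ is obtained by iterating the exponential map $\exp_B(n',1)$ (or any bracketing), and the tensor product of $\ell$-complexes of definition \ref{def:ltensor} is associative: the differential on $C^{i}\otimes D^j\otimes F^k$ is $d\otimes 1\otimes 1+q^{2i}\,1\otimes d\otimes 1+q^{2(i+j)}\,1\otimes 1\otimes d$ for either bracketing. Hence, by induction on $n$, it suffices to prove that for all $n,m$ the square
\[
\exp^d_B(n,m)\circ \delta_{B_{d_1}(n)\otimes B_{d_2}(m)} = \delta\circ \exp^d_B(n,m)
\]
commutes on each summand $B_{d_1}(n)\otimes B_{d_2}(m)$. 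The inductive step then writes $B_d(n+1)$ via $\exp_B(n,1)$ and uses the induction hypothesis on the first factor, which is legitimate because $\delta_{C\otimes D}$ is functorial in $\ell$-complex maps of $C$ and $D$.

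For the case $n=2$, recall that $\exp^d_B(n,m)$ is the composite of $B_{d_1}(\iota_{d_1}(n,m))\otimes B_{d_2}(\iota'_{d_2}(n,m))$ with $\mu_B^{(d_1,d_2)}(n+m)$. I will argue in two steps.

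First, $\delta$ is a natural transformation $B_d\to B_d$, since it is built from the natural maps $\mu$ and $\Delta$ and the scalar $q^{2\alpha_2}$ depends only on the graded summand. Hence $\delta\circ B_{d_i}(\iota)=B_{d_i}(\iota)\circ\delta$. The inclusions also preserve the grading, as they are maps in $\source{d}$ applied summandwise. Consequently $B_{d_1}(\iota)\otimes B_{d_2}(\iota')$ intertwines $\delta_{B_{d_1}(n)\otimes B_{d_2}(m)}$ with $\delta_{B_{d_1}\otimes B_{d_2}}(n+m)$. This holds because the twisting scalar $q^{2i}$ only involves the degree $i$ of the left factor, which is preserved.

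Second, lemma \ref{lem:prodB} gives $\mu_B\circ\delta_{B_{d_1}\otimes B_{d_2}}=\delta\circ\mu_B$ as natural transformations, and we evaluate this at $n+m$. Composing the two commutations yields the square.

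The main point requiring care is the meaning of $\delta_{B_{d_1}\otimes B_{d_2}}$ evaluated on $B_{d_1}(n)\otimes B_{d_2}(m)$ with different arguments. The tensor product of functors evaluated at $n+m$ is $B_{d_1}(n+m)\otimes B_{d_2}(n+m)$ only in the naive sense, whereas in $\quantumfunctor{}$ the tensor product is defined via induction along $\hecke{d_1}\otimes\hecke{d_2}\subset\hecke{d}$. I will check that, under the identification of notation \ref{notation : inclusions}, the first map of $\exp_B$ is really the restriction of $(B_{d_1}\otimes B_{d_2})(n+m)$ to the summand $\Induction(V_n^{\otimes d_1}\otimes V_m^{\otimes d_2})$. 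On that summand the components of $\delta_{B_{d_1}\otimes B_{d_2}}$, namely $\delta\otimes 1$ and $q^{2i}(1\otimes\delta)$, act factorwise by naturality. Once this identification is made, the rest is formal, and the iteration to $n$ factors together with the identification $B_{\alpha_i}(1)$ in the statement follows.
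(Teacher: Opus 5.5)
Your argument is correct and is exactly the reasoning the paper leaves implicit: the lemma is stated without proof, as an immediate consequence of the naturality of $\delta$ and of $\mu_B\circ\delta_{B_{d_1}\otimes B_{d_2}}=\delta\circ\mu_B$ from lemma \ref{lem:prodB}, iterated using associativity of the tensor product of $\ell$-complexes. The worry in your last paragraph is unnecessary, though. In $\quantumfunctor{}$ one has $(F\otimes G)(N)=F(N)\otimes G(N)$ and $(f\otimes g)_N=f_N\otimes g_N$; the Hecke algebras enter only through restriction on morphisms. So your first step is already complete. Moreover, the identification you propose, of the image of $B_{d_1}(\iota)\otimes B_{d_2}(\iota')$ with the part of $(B_{d_1}\otimes B_{d_2})(n+m)$ cut out by the induced summand, is not correct as stated. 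For $d_1=d_2=n=m=1$ the latter is the whole weight space $(B_1\otimes B_1)_{(1,1)}$, which also contains $(B_1)_{(0,1)}\otimes(B_1)_{(1,0)}$. You should simply drop that identification.
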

Hence, as $\field$-vector space, the homology $H(B_d(n))$ of the Troesch complex is isomorphic to
\[
    \bigoplus_{\alpha \in \Omega(d,n)} H(B_{\alpha_1}(1)) \otimes \cdots \otimes H(B_{\alpha_n}(1))
\]
by Kunneth formula. In our case, by proposition \ref{prop:start}, the homology in degree $0$ of $B_{3d}$ is $\twist{(S^d)}$. So, using the above lemma and proposition \ref{prop:lkunneth}, it will be sufficient to show that $B_d(1)$ has homology concentrated in degree $0$ to show that $B_{3d}$ is a $3$-coresolution of $\twist{(S^d)}$.

To this end, we will need explicit description of the elements of $S^*_q(1)$, and the effect of the differential on the elements.
\begin{proposition}\label{prop:Sdescription}
    The algebra $S^*_q(1)$ is isomorphic to the free algebra on one variable $e$. Moreover, 
    \[
        \delta(e^{k_1} \otimes e^{k_2} \otimes e^{k_3}) = (k_1)_{q^2} (e^{k_1-1} \otimes e^{k_2+1} \otimes e^{k_3}) + (q^2)^{k_2}(k_2)_{q^2} (e^{k_1} \otimes e^{k_2-1} \otimes e^{k_3+1}).
    \]
\end{proposition}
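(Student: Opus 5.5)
We describe $S^*_q(1)$ first. For $n=1$ the Hecke algebra $\hecke{d}$ acts on $V_1^{\otimes d}\cong \field$ through a character, so $S^d_q(1)=\xi_{(d)}\cdot S^d_q(1)$ is one-dimensional by proposition \ref{prop:symmetric power property}, applied with $\alpha=(d)\in\Omega(d,1)$. Choose a generator $e$ of $S^1_q(1)=V_1$. The product $\mu^{(d_1,d_2)}:S^{d_1}_q(1)\otimes S^{d_2}_q(1)\to S^{d}_q(1)$ is surjective between one-dimensional spaces, hence an isomorphism. By induction $e^k$ is therefore a nonzero element spanning $S^k_q(1)$. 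Associativity and the unit make $S^*_q(1)$ the polynomial algebra $\field[e]$ as a graded algebra. It is commutative: since $V_1$ is one-dimensional, the braiding on $S^*_q(1)$ is a scalar, and relation (\ref{eq:detwist}) only rescales products of monomials.

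Next we compute the coproduct $\Delta^{(k-1,1)}(e^k)=c_k\, e^{k-1}\otimes e$ for a scalar $c_k$. We determine $c_k$ by induction using relation (\ref{eq:dumbbell1}) with $x=e^{k-1}$ and $y=e$. We have $\Delta^{(0,1)}(e)=1\otimes e$, so $c_1=1$. The term $\sum xy'\otimes y''$ gives $e^{k-1}\otimes e$. The term $q^{|y|}\sum x'r(y)\otimes r(x'')$ involves the braiding $R^{(1,1)}$ on $S^1_q(1)\otimes S^1_q(1)=V_1\otimes V_1$, which acts by a scalar.

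\textbf{Main obstacle.} We must pin down the normalisation of this braiding scalar (of the form $q^{\pm1}$) from \cite[Section 3]{théo2025extgroupcategoryquantumpolynomial}. The target formula forces $q\cdot(\text{braiding scalar})=q^2$, so the braiding on $V_1\otimes V_1$ must be multiplication by $q$. Accepting this, the recursion is $c_k=1+q^2c_{k-1}$, hence
\[c_k=1+q^2+\cdots+q^{2(k-1)}=(k)_{q^2}.\]
As a cross-check, we would verify that relation (\ref{eq:detwist}) with $|x|=|y|=1$ yields the same scalar. Equivalently, on $S^k_q(1)$ the braiding of $e^a\otimes e^b$ should be $q^{ab}e^b\otimes e^a$, consistent with (\ref{eq:detwist}) and commutativity.

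Finally, we substitute into the formula
\[\delta(x_1\otimes x_2\otimes x_3)=\sum x_1'\otimes x_1''x_2\otimes x_3+q^{2|x_2|}\sum x_1\otimes x_2'\otimes x_2''x_3\]
with $x_i=e^{k_i}$. The first sum becomes $(k_1)_{q^2}\,e^{k_1-1}\otimes e\cdot e^{k_2}\otimes e^{k_3}=(k_1)_{q^2}\,e^{k_1-1}\otimes e^{k_2+1}\otimes e^{k_3}$. The second sum becomes $(q^2)^{k_2}(k_2)_{q^2}\,e^{k_1}\otimes e^{k_2-1}\otimes e^{k_3+1}$. When $k_1=0$ or $k_2=0$ the corresponding term vanishes, consistent with $(0)_{q^2}=0$. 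This gives the stated formula.
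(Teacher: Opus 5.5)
The paper states this proposition without proof, so your argument is a self-contained derivation from the structural facts the paper lists rather than a variant of a written proof. It is correct, up to one step that needs rewording.

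\begin{itemize}
\item $S^d_q(1)=(S^d_q)_{(d)}$ is one-dimensional by Proposition~\ref{prop:symmetric power property}.
\item $(S^{d_1}_q\otimes S^{d_2}_q)(1)=S^{d_1}_q(1)\otimes S^{d_2}_q(1)$ by Proposition~\ref{prop:weighttensor} with $n=1$.
\item Since the products are surjective, these facts give $S^*_q(1)\cong\field[e]$ with every $e^k\neq 0$.
\item The counit identity $\Delta^{(0,1)}(e)=1\otimes e$ and (\ref{eq:dumbbell1}) with $x=e^{k-1}$, $y=e$ give the recursion $c_k=1+q\beta c_{k-1}$, where $R^{(1,1)}(e\otimes e)=\beta\, e\otimes e$.
\end{itemize}

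Your route uses nothing beyond the bialgebra and braiding identities. It only covers $n=1$, but that is all the paper needs, since general $n$ is handled through the exponential property.

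The step to change is your ``main obstacle''. As written, you fix $\beta$ by matching the target formula, which is circular. You also do not need to hunt for normalisations in the earlier paper. Relation (\ref{eq:detwist}) with $x=y=e$ (reading its exponent as $|x||y|$) says $\beta e^2=q\,e^2$. Since $e^2\neq 0$ by your first paragraph, $\beta=q$. This is exactly the ``cross-check'' you mention, so make it the argument itself. Then $c_k=1+q^2c_{k-1}$, hence $c_k=(k)_{q^2}$, and your final substitution into $\delta$ is right.

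The commutativity remark can be dropped: an associative algebra generated by one element is automatically commutative.
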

This allows us to define morphisms of $3$-complexes, which will be useful to compute the homology.
\begin{notation}
    Let $C$ be a $3$-complex, and $t\geq 0$. We denote $C[t]$ the $3$-complex $C[t]^i = C^{i-t}$ with differential $\delta_{C[t]} : C[t]^i \to C[t]^{i+1}$ given by $\delta_C : C^{i-t} \to C^{i+1-t}$ (set $C^{i-t} = 0$ if $i-t < 0$).
\end{notation}
\begin{lemma}\label{lem:complexmorphism}
    Let $d,k_1,k_2,k_3 \geq 0$ and $k = k_1 + k_2 + k_3$. Suppose $3$ divide $k_1$ and $k_2$. Then 
     \[
         \psi : \left \{ \begin{array}{rcl}
             B_d(1)[k_2+2k_3] & \to & B_{d+k}(1) \\
             a & \mapsto & a(e^{k_1} \otimes e^{k_2} \otimes e^{k_3}) 
         \end{array} \right .
     \]
     is an injective morphism of $3$-complexes.
\end{lemma}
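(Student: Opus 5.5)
The plan rests on two things proved earlier: the compatibility of $\mu_B$ with $\delta$ (lemma \ref{lem:prodB}), and the explicit formula of proposition \ref{prop:Sdescription}. Write $c = e^{k_1}\otimes e^{k_2}\otimes e^{k_3} \in B_k(1)$. It has degree $k_2+2k_3$, so $\psi$ is a graded map $B_d(1)[k_2+2k_3]\to B_{d+k}(1)$ of degree $0$. It is natural to expect $\psi$ to be a chain map as soon as $\delta(c)=0$. Indeed, by lemma \ref{lem:prodB} and definition \ref{def:ltensor}, for $a \in B_d^i(1)$ we have
\[
    \delta(ac) = \delta(a)c + q^{2i} a\,\delta(c),
\]
so $\delta(c)=0$ gives $\delta(\psi(a)) = \psi(\delta a)$.

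The first step is therefore to check $\delta(c)=0$ using proposition \ref{prop:Sdescription}. The coefficients that appear are $(k_1)_{q^2}$ and $(q^2)^{k_2}(k_2)_{q^2}$. Since $q^2$ is a primitive cube root of unity ($\ell=3$ odd) and $3\mid k_1,k_2$, both quantum integers vanish: $(3m)_{q^2} = \frac{1-q^{6m}}{1-q^2}=0$. Hence $\delta(c)=0$, and $\psi$ is a morphism of $3$-complexes. Note that no condition on $k_3$ is needed, which matches the statement.

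The second step, injectivity, is where the real work lies. I would compute $\mu_B(a\otimes c)$ explicitly for a basis element $a = e^{a_1}\otimes e^{a_2}\otimes e^{a_3}$ of $B_d(1)$. In $S^*_q(1)$, which is one-dimensional in each degree, the braiding $R^{(i,j)}$ on $S^i_q(1)\otimes S^j_q(1)$ is a map between one-dimensional spaces. It is an isomorphism, so it sends $e^i\otimes e^j \mapsto \lambda_{i,j}\, e^j\otimes e^i$ with $\lambda_{i,j}\neq 0$ (in fact a power of $q$, as relation (\ref{eq:detwist}) suggests, $\lambda_{i,j}=q^{ij}$). Plugging this into the formula of lemma \ref{lem:prodB}, the sums collapse to a single term:
\[
    a c = (\text{nonzero scalar})\cdot e^{a_1+k_1}\otimes e^{a_2+k_2}\otimes e^{a_3+k_3}.
\]

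Consequently $\psi$ sends the basis $\{e^{a_1}\otimes e^{a_2}\otimes e^{a_3}\}$ to nonzero multiples of distinct basis vectors of $B_{d+k}(1)$, because $(a_1,a_2,a_3)\mapsto (a_1+k_1,a_2+k_2,a_3+k_3)$ is injective. Thus $\psi$ is injective. The only subtle point I expect is justifying that the braiding scalars are nonzero. This follows simply from $R$ being an isomorphism between one-dimensional spaces, so no explicit computation of $\lambda_{i,j}$ is required.
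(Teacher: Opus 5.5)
Your proposal is correct and follows the paper's proof. The chain-map property comes from the Leibniz rule $\delta(ac)=\delta(a)c+q^{2\deg a}a\,\delta(c)$ (lemma \ref{lem:prodB}) together with $\delta(e^{k_1}\otimes e^{k_2}\otimes e^{k_3})=0$, which follows from proposition \ref{prop:Sdescription} since $3\mid k_1,k_2$. Injectivity comes from $ac$ being a nonzero multiple of the monomial with added exponents. The only difference is cosmetic: the paper gets that scalar from \cite[lemma 5.1]{hong2017quantum}, while you obtain its nonvanishing directly from the braidings being isomorphisms between one-dimensional spaces, which suffices.
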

\begin{proof}
    Since the product commute with the differential,
    \[
        \delta(ab) = \delta(a)b + q^{2|a_2| + 4|a_3|} a \delta(b).
    \]
    Hence, to show that $\psi$ is a morphism of three complexes, it suffices to show $\delta(e^{k_1} \otimes e^{k_2} \otimes e^{k_3}) = 0$, but this follows from proposition \ref{prop:Sdescription}. Injectivity follows from the equality
    \[
        (e^{i_1} \otimes e^{i_2} \otimes e^{i_3})(e^{k_1} \otimes e^{k_2} \otimes e^{k_3}) = q^{something} (e^{i_1 + k_1} \otimes e^{i_2 + k_2} \otimes e^{i_3 + k_3})
    \]
    which can be proven using \cite[lemma 5.1]{hong2017quantum}.
\end{proof}
% \begin{lemma}\label{lem:complexmorphism}
%     Let $i \geq 0$. For $d \geq 0$, set $\alpha^d \in \Omega(d,n)$ be the compositions of $d$ in $n$ parts with $\alpha^d_i = d$ (and hence $\alpha^d_j = 0$ for $j \neq i$). Also, let $k_1,k_2,k_3 \geq 0$, $k=k_1+k_2+k_3$. If $3$ divide $k_1$ and $k_2$, then
%     \[
%         \psi : \left \{ \begin{array}{rcl}
%             B_{\alpha^d}[k_2+2k_3] & \to & B_{\alpha^{d+k}} \\
%             a & \mapsto & a(e_i^{k_1} \otimes e_i^{k_2} \otimes e_i^{k_3}) 
%         \end{array} \right .
%     \]
%     is an injective morphism of $3$-complexes.
% \end{lemma}
% \begin{proof}
%     Since the product commutes with the differentials (lemma \ref{lem:prodB}), we have
%     \[
%         \delta(ab) = \delta(a)b + q^{2|a_2| + 4|a_3|} a \delta(b).
%     \]
%     Hence, to show that $\psi$ is a morphism of three complexes, it suffices to show $\delta(e_i^{k_1} \otimes e_i^{k_2} \otimes e_i^{k_3}) = 0$, but this follows from \eqref{eq:deltavalue}. The injectivity follows from \eqref{eq:productvalue} and linear independence of the $e^\alpha$.
% \end{proof}

We can now prove our theorem.
\begin{theorem}\label{theo:cohomologyBd}
    The $3$-complex $B_d$ is acyclic when $3$ does not divide $d$, and if $d=3d'$, then $B_d$ is a $3$-coresolution of $\twist{(S^{d'})}$.
\end{theorem}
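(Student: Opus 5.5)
The plan follows the strategy announced just after lemma \ref{lem : compatibility of exp with the differential}. Via the exponential isomorphism and proposition \ref{prop:lkunneth}, everything reduces to the one-variable complexes $B_d(1)$. The main claim to establish is therefore: $B_d(1)$ is acyclic if $3 \nmid d$, and if $d = 3d'$ it is a $3$-coresolution of $\field$, concentrated in degree $0$ and spanned by $e^{d} \otimes 1 \otimes 1$.

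Granting this, decompose $B_{3d'}(n)$ as $\bigoplus_{\alpha \in \Omega(3d',n)} B_{\alpha_1}(1)\otimes\cdots\otimes B_{\alpha_n}(1)$, compatibly with $\delta$. Apply proposition \ref{prop:lkunneth} together with its acyclic analogue: if one factor is acyclic, so is the tensor product, which is proved like lemma \ref{lem:short} or as in Troesch. Then only the summands with $3 \mid \alpha$ survive, and these contribute one-dimensional cohomology in degree $0$. So $H^i_{[s]}(B_{3d'}(n)) = 0$ for $i>0$. In degree $0$, proposition \ref{prop:start} identifies $H^0_{[s]} = \ker\delta^s \cap B^0 = \ker \Delta^{(3d'-1,1)} = \twist{(S^{d'})}$. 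Indeed $\delta|_{B^0} = \delta_1$ is essentially $\Delta^{(3d'-1,1)}$, and $\ker\delta^s|_{B^0} = \ker\delta$ by the degree-$0$ computation in one variable combined with the exponential decomposition. For $3\nmid d$, every $\alpha$ has some $\alpha_i$ not divisible by $3$, giving acyclicity. The statement is valid for every $n$, and hence for the functors.

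For $B_d(1)$, the basis is $e^{k_1}\otimes e^{k_2}\otimes e^{k_3}$ with $k_1+k_2+k_3=d$, and proposition \ref{prop:Sdescription} gives the differential. Since $q^2$ is a primitive cube root of unity, $(k)_{q^2}=0$ exactly when $3\mid k$. I will induct on $d$ using lemma \ref{lem:complexmorphism} and lemma \ref{lem:short}. Write $d = 3m + r$ with $r \in \{0,1,2\}$. Filter $B_d(1)$ by the images of $\psi$ for $(k_1,k_2,k_3) = (3,0,0), (0,3,0)$, and also multiplication by $e^{k_3}$ in the third slot (here the relevant scalar $(q^2)^{k_2}(k_2)_{q^2}$ enters only through the second slot, so I expect multiplying by $1\otimes 1\otimes e$ to also commute with $\delta$ up to a scalar; I will verify this). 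This decomposes $B_d(1)$ into subquotients that are direct sums of the basic blocks of examples \ref{example:lacyclic} and \ref{example:lcoresolution}.

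Concretely, the key step is the image of $\psi$ for $(3,0,0)$, which is $B_{d-3}(1)$ embedded in $B_d(1)$. Its quotient is spanned by the monomials with $k_1 \le 2$, and there $\delta$ moves $k_1 \to k_1-1$ with nonzero coefficient $(k_1)_{q^2}$ when $k_1\in\{1,2\}$. I will sort this quotient into chains $e^{2}\otimes e^{j}\otimes e^{c}\to e\otimes e^{j+1}\otimes e^c\to 1\otimes e^{j+2}\otimes e^c$ of length $3$, modulo lower terms. Each such chain is acyclic provided the first two coefficients are nonzero, by example \ref{example:lacyclic}. The leftover part, $k_1=0$, is a copy of the analogous two-slot complex and is handled similarly using $(0,3,0)$.

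The main obstacle is exactly this bookkeeping. The chains are not on the nose subcomplexes: the $\delta_2$ term leaks, and the coefficient $(q^2)^{k_2}(k_2)_{q^2}$ vanishes when $3\mid k_2$, which breaks chains. I would first attempt a finer filtration by $k_1+k_2$ (or by $k_3$), so that in the associated graded only $\delta_1$ or only $\delta_2$ survives. The graded pieces are then sums of explicit truncated chains, whose lengths are governed by residues mod $3$. I would check that every chain has length $3$, hence is acyclic, except the single length-$1$ chain $e^{d}\otimes 1\otimes1$ when $3\mid d$. Lemma \ref{lem:short} then lifts the conclusion from the graded pieces to $B_d(1)$.
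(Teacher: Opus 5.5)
Your reduction to the one-variable complexes $B_d(1)$ is sound, and so is your choice of $\psi : B_{d-3}(1)\to B_d(1)$, $a\mapsto a(e^3\otimes 1\otimes 1)$. But the step you defer as ``bookkeeping'' fails as you describe it, and its predicted outcome is false.

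Filter by $k_3$ (equivalently by $k_1+k_2$). The graded piece $k_3=c$ is the $\delta_1$-string $e^{d-c}\otimes 1\otimes e^c\to\cdots\to 1\otimes e^{d-c}\otimes e^c$, whose arrow out of $k_1$ vanishes when $3\mid k_1$. Its top block therefore has length $1$ or $2$ unless $d-c\equiv 2 \pmod 3$, and $1\otimes 1\otimes e^d$ is always an isolated singleton. Already for $d=1$ the pieces are $e\otimes 1\otimes 1\to 1\otimes e\otimes 1$ and $1\otimes 1\otimes e$. Neither is acyclic, although $B_1(1)$ is, so Lemma \ref{lem:short} cannot be fed by them. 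Filtering by $k_1$ instead gives two-slot strings broken at $3\mid k_2$, with the same problem.

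Your ``leftover $k_1=0$ part handled via $(0,3,0)$'' fails for the same reason. The span of the $1\otimes e^{k_2}\otimes e^{d-k_2}$ is a subcomplex with nonzero cohomology unless $d\equiv 2\pmod 3$; for example, $1\otimes e^3\otimes 1$ is a non-bounding cycle there when $d=3$. Moreover, in $Q=B_d(1)/\psi(B_{d-3}(1))$ almost all $k_1=0$ monomials are already used by your length-three chains. Finally, $e^d\otimes 1\otimes 1\in\Image\psi$ for $d\ge 3$, so your predicted exceptional chain does not even live in $Q$.

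The missing observation is easy, and with it your route becomes a different and simpler proof than the paper's. In $Q$, your chains $e^{2}\otimes e^{j}\otimes e^{c}\to e\otimes e^{j+1}\otimes e^c\to 1\otimes e^{j+2}\otimes e^c$ ($0\le c\le d-2$) exhaust all monomials except three: $e\otimes 1\otimes e^{d-1}$, $1\otimes e\otimes e^{d-1}$ and $1\otimes 1\otimes e^d$. These span a subcomplex $L$ on which $\delta$ acts by $1$ and $q^2$ in degrees $2d-2\to 2d-1\to 2d$, so $L$ is acyclic. Filtering $Q/L$ by $k_3$ leaves exactly your chains, with coefficients $(2)_{q^2},(1)_{q^2}\neq 0$.

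Hence $Q$ is acyclic for all $d\ge 1$, and $\psi$ induces $H(B_{d-3}(1))\cong H(B_d(1))$. The configuration needed here, with the subcomplex a coresolution and the quotient acyclic, is not literally Lemma \ref{lem:short}, but it follows from the same long exact sequence. The induction starting from $B_0(1)$, $B_1(1)=Q$ and $B_2(1)=Q$ is then immediate, and the multiplications by $1\otimes e^3\otimes 1$ and $1\otimes 1\otimes e$ are not needed.

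The paper instead embeds $B_{d-1}(1)[2]$ via $1\otimes 1\otimes e$. This handles $d\equiv 0,2$ directly, but for $d\equiv 1$ the subcomplex is a coresolution and the cokernel is not acyclic, which forces the three-step construction with $\Psi_0,\Psi_1,\Psi_2$. Shifting by $e^3$ in the first slot makes the cokernel acyclic for every residue of $d$ mod $3$ and avoids that case analysis.
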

\begin{proof}
    As noted earlier, we will deduce the theorem from the fact that the $B_d(1)$ are acyclic when $3$ does not divide $d$, and are $3$-coresolution of $\field$ when $3$ divide $d$ (so in particular, by showing that their homology is concentrated in degree $0$). We show this by induction on $d$.

    For $d=0$, $B_0(1) = \field(1 \otimes 1 \otimes 1)$ concentrated in degree $0$, and hence is a $3$-coresolution of $\field$.

    For $d=1$,
    \[
        B_1(1) : \field (e \otimes 1 \otimes 1) \to \field (1 \otimes e \otimes 1) \to \field (1 \otimes 1 \otimes e)
    \]
    and both arrows are isomorphisms :
    \[
        \delta(e \otimes1 \otimes 1) = (1 \otimes e \otimes 1) \quad \mbox{and} \quad \delta(1 \otimes e \otimes 1) = q^2(1 \otimes 1 \otimes e).
    \]
    Hence, $B_1(1)$ is acyclic.

    For $d>1$, we consider three cases.
    \begin{itemize}
        \item $d \equiv 2$ mod $3$. Then, by lemma \ref{lem:complexmorphism}, we have an injective morphism of $3$-complex
        \[
            B_{d-1}(1)[2] \xrightarrow{\times (1 \otimes 1 \otimes e_i)} B_d.
        \]
        Its cokernel $Q$ is given by $Q^k = \field [e^{d-k} \otimes e^k \otimes 1]$ (where $[e^{d-k} \otimes e^k \otimes 1]$ denote the class of $e^{d-k} \otimes e^k \otimes 1$ in the quotient), with differential 
        \[
            \delta_Q([e^{d-k} \otimes e^k \otimes 1]) = (d-k)_{q^2} [e^{d-k-1} \otimes e^{k+1} \otimes 1].
        \]
        Thus, $\delta_Q : Q^k \to Q^{k+1}$ is equal to zero if $k \equiv 2$ mod $3$, and is an isomorphism otherwise. This shows that $Q$ is isomorphic as a $3$-complex to 
        \[
            \field \xrightarrow{=} \field \xrightarrow{=} \field \xrightarrow{0} \field \xrightarrow{=} \field \xrightarrow{=} \field \xrightarrow{0} \cdots \xrightarrow{0} \field \xrightarrow{=} \field \xrightarrow{=} \field \quad \mbox{(the complex stops in degree } d).
        \]
        Hence, $Q$ is acyclic (this is a direct sum of $3$-complexes of the form \ref{example:lacyclic}). Since moreover $B_{d-1}(1)[2]$ is acyclic and we have a short exact sequence
        \[
            0 \to B_{d-1}(1)[2] \to B_d(1) \to Q \to 0,
        \]
        we conclude that $B_d(1)$ is also acyclic.
        \item $d \equiv 0$ mod $3$. It is similar to the reasoning for $d \equiv 2$ mod $3$, but with $\delta_Q : Q^k \to Q^{k+1}$ equals to zero if $k \equiv 1$ mod $3$, and is an isomorphism otherwise. Thus, the cokernel is isomorphic to
        \[
            \field \xrightarrow{0}\field \xrightarrow{=} \field \xrightarrow{=} \field \xrightarrow{0} \field \xrightarrow{=} \field \xrightarrow{=} \field \xrightarrow{0} \cdots \xrightarrow{0} \field \xrightarrow{=} \field \xrightarrow{=} \field \quad \mbox{(stops in degree } d),
        \]
        and hence is a $3$-coresolution of $\field$. We then deduce from the short exact sequence that $B_d(1)$ is also a $3$-coresolution of $\field$.
        \item $d \equiv 1$ mod $3$. This is the most difficult case since $B_{d-1}(1)$ is not acyclic. We replace it by $B_{d-2}(1)$, which is acyclic. We have an injective morphism of $3$-complexes :
        \[
            \Psi_0 : B_{d-2}(1)[4] \xrightarrow{\times (1 \otimes 1 \otimes e^2)} B_{d}(1)
        \]
        whose cokernel $Q_0$ is given by 
        \[
            Q_0^k = \left \{ \begin{array}{cl}
                \field[e^{d-k} \otimes e^k \otimes 1] & \mbox{if } k=0,1 ; \\
                \field[e^{d-k} \otimes e^k \otimes 1] \oplus \field[e^{d-k+1} \otimes e^{k-2} \otimes e] & \mbox{otherwise.}
            \end{array} \right .
        \]
        Consider the cokernel $Q_1$ of $B_{d-3}(1)[2] \to B_{d-2}(1)$ as in the case $d \equiv 2$ mod $3$. It is acyclic, and given by $Q_1^k = \field [e^{d-k-2} \otimes e^k \otimes 1]$. Then the map
        \[
            B_{d-2}(1)[3] \xrightarrow{\times (1 \otimes e \otimes e)} B_{d}(1)
        \]
        is not a $3$-complex morphism, but pass to the quotient to a map
        \[
            \Psi_1 : Q_1[3] \to Q_0
        \]
        since for $a \in B_{\alpha^{d-3}}$,
        \[
            a(1 \otimes 1 \otimes e) (1 \otimes e \otimes e) = q a (1 \otimes e \otimes 1) (1 \otimes 1 \otimes e^2) \in \Image(\Psi_0). 
        \]
        Moreover, this quotient map is a morphism of $3$-complex, since for $a \in B_{d-2}(1)$,
        \[
            \delta(a(1 \otimes e \otimes e)) = \delta(a) (1 \otimes e \otimes e) + (q^2)^{|a|+1} \underbrace{a (1 \otimes 1 \otimes e^2)}_{\in \Image \Psi_0}
        \]
        and hence $\delta_{Q_0}(\Psi_1([a])) = \Psi_1([\delta(a)]) = \Psi_1(\delta_{Q_1}([a]))$. Moreover, $\Psi_1$ is injective :
        \[
            \Psi_1([e^{d-k-2} \otimes e^k \otimes 1]) = [e^{d-k-2} \otimes e^{k+1} \otimes e].
        \]
        Its cokernel $Q_2$ is thus given by
        \[
            Q_2^k = \left \{ \begin{array}{cl}
                \field[e^{d-2} \otimes e^2 \otimes 1] \oplus \field[e^{d-1} \otimes 1 \otimes e] & \mbox{if } k=2, \\
                \field[e^{d-k} \otimes e^k \otimes 1] & \mbox{otherwise.}
            \end{array} \right .
        \]
        Finally, we have a $3$-complex morphism
        \[
            \Psi_2 : B_{1}(1) \xrightarrow{\times (e^{d-1} \otimes 1 \otimes 1)} Q_2
        \]
        (since $3$ divide $d-1$, see lemma \ref{lem:complexmorphism}). Again, it is injective, and has cokernel $Q_3$ given by 
        \[
            Q_3^k = \left \{ \begin{array}{cl}
                \field[e^{d-k} \otimes e^k \otimes 1] & \mbox{if } 2 \leq k \leq d, \\
                0 & \mbox{otherwise.} 
            \end{array} \right .
        \]
        Moreover,
        \[
            \delta_{Q_3}([e^{d-k} \otimes e^k \otimes 1]) = (d-k) [e^{d-k-1} \otimes e^{k+1} \otimes 1]
        \]
        is zero if $k \equiv 1$ mod $3$, and is an isomorphism otherwise. Hence $Q_3$ is isomorphic to
        \[
            0 \to 0 \to \field \xrightarrow{=} \field \xrightarrow{=} \field \xrightarrow{0} \field \xrightarrow{=} \field \xrightarrow{=} \field \xrightarrow{0} \cdots \xrightarrow{0} \field \xrightarrow{=} \field \xrightarrow{=} \field
        \]
        and hence is acyclic. By short exact sequence arguments, we conclude that $B_d(1)$ is acyclic.
    \end{itemize}
    % This end the induction. Now, lemma \ref{lem:caseonepart} and proposition \ref{prop:lkunneth} show that $B_{\alpha}$ is $\ell$-acyclic if $3$ does not divide $\alpha$ (in the sense that it does not divide at least one part of $\alpha$) and is a $\ell$-coresolution of $\field$ if $3$ divide $\alpha$. In particular, the $3$-complex $B_d$ is acyclic if $3$ does not divide $d$, and if $d=3d'$, its cohomology is concentrated in degree $0$. Moreover, $\varphi_d(n) : \twist{(S^{d'})}(n) \to S^d_q(n)$ has image the kernel of $\delta = \Delta^{(d-1,1)} : S^d_q(n) \ (\cong S^d_q \otimes S^0_q \otimes S^0_q) \to (S^{d-1}_q \otimes S^1_q)(n) \ (\cong S^{d-1}_q \otimes S^1_q \otimes S^0_q)$ by proposition \ref{prop:start}. Thus, $B_{3d'}$ is a $3$-coresolution of $\twist{(S^{d'})}$.
\end{proof}
Now that it is done, we get all the results of section \ref{sec:comp} in the case $\ell=3$, and in particular the twisting spectral sequence.

\bibliography{biblio.bib}
\bibliographystyle{plain}

\end{document}